\DeclareMathAlphabet{\mathpzc}{OT1}{pzc}{m}{it}
\newtheorem{theorem}{Theorem}[section]
\newtheorem{lemma}[theorem]{Lemma}
\newtheorem{proposition}[theorem]{Proposition}
\newtheorem{corollary}[theorem]{Corollary}
\newcommand{\BMS}{\operatorname{BMS}}
\newenvironment{definition}[1][Definition]{\begin{trivlist}
\item[\hskip \labelsep {\bfseries #1}]}{\end{trivlist}}
\newenvironment{remark}[1][Remark]{\begin{trivlist}
\item[\hskip \labelsep {\bfseries #1}]}{\end{trivlist}}
\DeclareMathAlphabet{\mathpzc}{OT1}{pzc}{m}{it}
\newtheorem{thm}[theorem]{Theorem}
\newtheorem{prop}[theorem]{Proposition}
\newtheorem{Nota}[theorem]{Notation}
\numberwithin{equation}{section}
\numberwithin{equation}{section}
\newcommand{\be}{begin{equation}}
\newcommand{\e}{{\epsilon}}
\newcommand{\z}{\mathbb{Z}}
\newcommand{\R}{\mathbb{R}}
\newcommand{{\grinv}}{{\Cal G}^{-r}}
\newcommand{\ba}{\backslash}
\newcommand{\G}{\Gamma}
\newcommand{\g}{\gamma}
\newcommand{\Haar}{\operatorname{Haar}}
\newcommand{\Cal}{\mathcal}
\newcommand{\la}{\langle}
\newcommand{\ra}{\rangle}
\newcommand{\SL}{\operatorname{SL}}
\newcommand{\bp}{\begin{pmatrix}}
\newcommand{\ep}{\end{pmatrix}}
\renewcommand{\bp}{{\rm bp}}
\newcommand{\SO}{\operatorname{SO}}
\newcommand{\PS}{\rm{PS}}
\newcommand{\supp}{\operatorname{supp}}
\newcommand{\BR}{\operatorname{BR}}
\newcommand{\Leb}{\operatorname{Leb}}
\renewcommand{\setminus}{-}
\renewcommand{\be}{\begin{equation}}
\newcommand{\ee}{\end{equation}}
\newcommand{\Ad}{\operatorname{Ad}}
\newcommand{\Z}{\z}
\newcommand{\B}{B}
\newcommand{\cyl}{\mathsf{C}}
\newcommand{\Pp}{ \sigma}
\newcommand{\poi}{\mathcal{P}}
\title[Mixing on convex cocompact manifolds]{Exponential mixing for frame flows for convex cocompact hyperbolic manifolds}
\date{}
\author{Dale WInter}
\begin{document} 
\maketitle

\begin{abstract} The aim of this paper is to establish exponential mixing of frame flow for the measure of maximal entropy on a convex cocompact hyperbolic manifold. Consequences include results on the decay of matrix coefficients and on effective equidistribution of holonomies. The main technical point is a spectral bound on certain ``twisted'' transfer operators, which we obtain by building on Dolgopyat's framework. This extends and strengthens earlier work of Dolgopyat, Stoyanov, Pollicott, and others. 
\end{abstract}

\section{introduction}
We write $G := \SO(n, 1)_+$ for the orientation preserving isometry group of real hyperbolic $n$-space $\mathbb{H}^n$. We seek to understand discrete subgroups $\G < G$ with their associated hyperbolic manifolds $X := \G \ba \mathbb{H}^n$. Standard questions in this setting include
\begin{enumerate}
\item Orbit counting: for a fixed base point $o\in\mathbb{H}^n$ and $T \in \mathbb{R}$, how many orbit points $\G o$ lie within distance $T$ of $o$?
\item Counting geodesics: how many closed geodesics are there on $X$ of length less than $T$?
\item Representation Theory: for the $G$-representation $L^2(\G \ba G)$ and unit vectors $v, w \in L^2(\G \ba G)$, can we say anything about the matrix coefficients $\langle v, g\cdot w \rangle$ as $g \rightarrow \infty$ in $G$?
\end{enumerate}
When $\G$ is a lattice in $G$ (that is when $X$ has finite hyperbolic volume) these questions are by now classical (see \cite{EM, Ma, HM}, for example). We are therefore interested in the case of non-lattices or, equivalently, infinite area hyperbolic surfaces. Unfortunately the class of all infinite area hyperbolic manifold class seems to be far too broad to be tractable. Instead we shall focus on on groups and manifolds that are ``convex-cocompact'', which is to say that the non-wandering set for their geodesic flows should be compact (see subsection \ref{convex_cocompact} for details).  The advantage of working with convex cocompact groups is that compactness of the non-wandering set allows certain tools from hyperbolic dynamics to be applied in a straightforward fashion. This observation underlies work of Lalley, Naud, Stoyanov, and others \cite{La, Na, St} on which we build. 

Our aim is to to prove three theorems, which we now describe. Suppose that $\G < G$ is a Zariski dense convex cocompact subgroup and write $K$ for the maximal compact subgroup of $G$ stabilizing some base point $o \in \mathbb{H}^n$, so that $G/K = \mathbb{H}^n$. Let $A = \lbrace a_t: t \in \mathbb{R} \rbrace $ denote a one parameter diagonalizable subgroup of $G$ and let $M = Z_K(A)$ be the centralizer of $A$ in $K$. It is well known that $\G \ba G$ and $\G \ba G / M$ correspond to the orthonormal frame bundle and the unit tangent bundle over $\G \ba \mathbb{H}^n$ respectively.  The right action of $A$ on $\G \ba G$ and $\G \ba G / M$ correspond to the frame flow and geodesic flow respectively. Reparametrizing  $A$ if necessary we assume that the geodesic flow so obtained proceeds at unit speed. 

The Haar measure on $\G \ba G$ can be difficult  to work with, being non-ergodic and totally dissipative whenever $\G$ is a convex-cocompact non-lattice. Instead we work with the (unique) measure of maximal entropy $m^{\BMS}$ for the $A$ action on $\G \ba G / M$. This approach grew out of work of Bowen, Margulis, and Sullivan, and is well described in Roblin's thesis \cite{Ro}. Abusing notation slightly we also write $m^{\BMS}$ for the $M$ invariant lift of $m^{\BMS}$ to $\G \ba G$. We write $\delta = \delta(\G) \in (0, n-1)$ for the critical exponent of $\G$ or, equivalently, for the topological entropy of $(\G \ba G/M, a_t)$. 

The dynamical formulation of our main result is as exponential mixing for $m^{\BMS}$.  
\begin{theorem} \label{main1} The $a_t$ action is exponentially mixing for $m^{\BMS}$: There exists $\eta > 0$ and $k \in \mathbb{N}$ such that for any compactly supported functions $\phi, \psi \in C^k(\G \ba G)$ that are $k$-times continuously differentiable we have
$$ \int_{G \ba G} \phi(ga_t) \psi(g) dm^{\BMS}(g) = m^{\BMS}(\phi)m^{\BMS}(\psi) + O(||\phi||_{C^k} ||\psi||_{C^k} e^{-\eta t}).$$
as $t \rightarrow \infty$: here $C^k$ denotes the usual $C^k$ norm with respect to the left invariant metric on G. 
\end{theorem}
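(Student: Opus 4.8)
The strategy follows the now-standard route from symbolic dynamics and spectral estimates for transfer operators to mixing of the flow, in the spirit of Pollicott, Dolgopyat, Stoyanov, and Naud. First I would code the geodesic flow on the non-wandering set of $\G\ba G/M$ by a subshift of finite type: using a Markov partition adapted to the convex cocompact structure, one realizes $(\G\ba G/M, a_t)$ (up to the usual measure-zero ambiguities on boundaries) as a suspension flow over a subshift $(\Sigma^+, \sigma)$ with a Hölder roof function $\tau$ whose pressure-normalization at parameter $\delta$ reproduces the Bowen--Margulis--Sullivan measure. The frame flow on $\G\ba G$ then appears as a compact group ($M$) extension of this suspension, so test functions $\phi,\psi\in C^k(\G\ba G)$ decompose along irreducible $M$-representations and the correlation integral splits into finitely many pieces (after truncating the $M$-Fourier expansion, controlling the tail by the $C^k$ norm) indexed by $\rho\in\hat M$.

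Next I would pass to the Laplace transform of the correlation function and express it, via the symbolic model, in terms of the resolvent of a family of twisted transfer operators $\mathcal L_{\xi,\rho}$ acting on Hölder (or $C^1$) functions on $\Sigma^+$, where $\xi = \delta + a + ib$ is the Laplace parameter and $\rho$ runs over $\hat M$; here the twist by $\rho(\text{holonomy})$ encodes the $M$-extension and the $ib$-twist encodes the flow direction. The heart of the argument is a uniform spectral bound: there exist $\e_0>0$, $C\geq 1$, $0<\theta<1$ such that for all $|a|<\e_0$, all $|b|$ large, and all $\rho\in\hat M$ (with the $\rho$-dependence of constants at worst polynomial), one has $\|\mathcal L_{\xi,\rho}^n f\| \le C\theta^n \|f\|$ in the appropriate norm — this is precisely the ``Dolgopyat-type'' estimate announced in the abstract and is the step I expect to carry essentially all the difficulty. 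It requires: (i) a Lasota--Yorke / a priori bound giving quasicompactness and reducing matters to large $|b|$; (ii) the Dolgopyat cancellation mechanism, exploiting that the roof function $\tau$ (equivalently the boundary map) is not cohomologous to a locally constant function — this is where Zariski density of $\G$ enters, via a non-concentration / local-non-integrability (LNIC) property of the stable/unstable holonomies, now in the presence of the extra $M$-twist, so one must check the twisted cocycle genuinely oscillates and does not lose cancellation uniformly in $\rho$; and (iii) constructing Dolgopyat's cones and a contraction operator on them, with all constants uniform over $\rho$ and over small $a$. Establishing the LNIC-type statement uniformly in $\rho\in\hat M$, and tracking the polynomial-in-$\rho$ dependence of constants, is the main obstacle; the compactness of the non-wandering set (convex cocompactness) is what makes the underlying symbolic dynamics and the bunching estimates tractable.

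Finally I would convert the spectral bound into mixing. The uniform resolvent bound gives, by a standard contour-shift / Paley--Wiener argument, meromorphic continuation of the Laplace transform of each $\rho$-component of the correlation function to a half-plane $\Re\xi > \delta - \e_1$, with the only pole at $\xi = \delta$ (corresponding to the leading eigenfunction of $\mathcal L_{\delta,\text{triv}}$, whose residue produces the product $m^{\BMS}(\phi)m^{\BMS}(\psi)$), and polynomial growth in vertical strips. Inverting the Laplace transform and summing the (finitely many relevant, plus a controlled tail of) $M$-components then yields the exponential error term $O(\|\phi\|_{C^k}\|\psi\|_{C^k}e^{-\eta t})$ for suitable $\eta>0$ and $k$; the loss of derivatives (the value of $k$) comes from the truncation of the $M$-Fourier series and from smoothing in the flow direction needed to apply the Paley--Wiener step. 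One should also note the standard technical caveat that the symbolic coding introduces boundary identifications and that the test functions must be regularized (e.g. by convolving in the flow and group directions) before applying the transfer-operator machinery, with the regularization error absorbed into the $C^k$ norms.
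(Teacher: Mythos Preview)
Your plan is correct and matches the paper's approach essentially step for step: Markov coding of the flow, realization of frame flow as an $M$-extension, Peter--Weyl decomposition into $M$-types, reduction via Laplace transform to spectral bounds for twisted transfer operators $\mathcal L_{\xi,\mu}$, a Dolgopyat-type contraction estimate uniform (polynomially) in $\mu$, and Laplace inversion. Two small refinements worth noting: the paper works geometrically with $C^1$ functions on smooth neighborhoods $\tilde U$ of the fractal unstable pieces (rather than purely symbolically on a subshift), and it separates two distinct ingredients you have merged---the non-local-integrability of the $AM$-valued cocycle (established via Brin--Pesin moves and the adjoint action) and a non-concentration property of the limit set near hyperplanes (needed so that the cancellation regions produced by Dolgopyat's mechanism actually carry $\nu$-mass)---both of which rely on Zariski density but in different ways.
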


\begin{remark}
The main term here already appears in work of Parry-Pollicott \cite{PP}. For functions invariant under the right $M$-action on $\G \ba G$ the error term is due to Stoyanov \cite{St} building on work of Dolgopyat \cite{Do}. Our approach is to extend their ideas to general (non $M$-invariant) functions. 
\end{remark}

Throughout this paper we will concentrate on smooth functions. In fact, though, a relatively simple convolutional argument, apparently going back to C.C. Moore and M. Ratner and generalized by Kleinbock and Margulis in \cite[Appendix]{KM}, extends the result to H\"older functions.
\begin{corollary} \label{main1_coro}
The estimate of Theorem \ref{main1} holds so long as $\phi, \psi$ are H\"older; in this case the $C^k$ norms in the error term should be replaced with appropriate H\"older norms, while the decay exponent $\eta$ will depend upon the H\"older class. 
 \end{corollary}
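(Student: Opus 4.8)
The plan is to deduce the H\"older estimate from the smooth case (Theorem \ref{main1}) by the standard convolution-smoothing device of Moore and Ratner, in the form used by Kleinbock--Margulis \cite[Appendix]{KM}. Fix a H\"older exponent $\alpha \in (0,1]$ and, for $\epsilon \in (0,1)$, let $\rho_\epsilon \in C_c^\infty(G)$ be a non-negative bump function supported in the $\epsilon$-ball about the identity for the left-invariant metric, with $\int_G \rho_\epsilon\,dg = 1$, chosen (by rescaling a fixed bump in exponential coordinates) so that the $L^1$-norm of every derivative of $\rho_\epsilon$ of order $j \le k$ is $O(\epsilon^{-j})$. Given a compactly supported $\phi \in C^\alpha(\G \ba G)$, define the right convolution $\phi_\epsilon(g) := \int_G \rho_\epsilon(h)\,\phi(gh)\,dh$. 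Then $\phi_\epsilon$ descends to $\G \ba G$, is smooth and compactly supported (its support lies in an $\epsilon$-neighbourhood of $\operatorname{supp}\phi$), and satisfies $\|\phi_\epsilon\|_\infty \le \|\phi\|_\infty$. Moving each left-invariant derivative off $\phi$ and onto $\rho_\epsilon$ (using unimodularity of $G$, and noting this works precisely because we convolved on the \emph{right}, so that the regularity produced is measured by the left-invariant $C^k$ norm of Theorem \ref{main1}) gives $\|\phi_\epsilon\|_{C^k} = O(\epsilon^{-k}\|\phi\|_\infty)$. Finally, left-invariance of the metric yields the approximation bound $\|\phi - \phi_\epsilon\|_\infty \le \sup_{d(e,h)\le\epsilon}\|\phi - \phi(\,\cdot\, h)\|_\infty \le \|\phi\|_{C^\alpha}\,\epsilon^\alpha$, and similarly for $\psi$.

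Next I would compare the two correlations. Telescoping $\phi(ga_t)\psi(g) - \phi_\epsilon(ga_t)\psi_\epsilon(g) = (\phi-\phi_\epsilon)(ga_t)\,\psi(g) + \phi_\epsilon(ga_t)\,(\psi-\psi_\epsilon)(g)$, using $\|\phi_\epsilon\|_\infty \le \|\phi\|_\infty$, the approximation bound above, and --- crucially --- the fact that $m^{\BMS}$ is a \emph{finite} measure (this is the only place convex cocompactness enters, via compactness of the non-wandering set), one gets, uniformly in $t$,
\[
\Bigl| \int_{\G\ba G}\phi(ga_t)\psi(g)\,dm^{\BMS} - \int_{\G\ba G}\phi_\epsilon(ga_t)\psi_\epsilon(g)\,dm^{\BMS}\Bigr| = O\bigl(\epsilon^\alpha\|\phi\|_{C^\alpha}\|\psi\|_{C^\alpha}\bigr),
\]
and likewise $\bigl|m^{\BMS}(\phi)m^{\BMS}(\psi) - m^{\BMS}(\phi_\epsilon)m^{\BMS}(\psi_\epsilon)\bigr| = O\bigl(\epsilon^\alpha\|\phi\|_{C^\alpha}\|\psi\|_{C^\alpha}\bigr)$. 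Applying Theorem \ref{main1} to the smooth pair $(\phi_\epsilon,\psi_\epsilon)$ and substituting $\|\phi_\epsilon\|_{C^k}\|\psi_\epsilon\|_{C^k} = O\bigl(\epsilon^{-2k}\|\phi\|_{C^\alpha}\|\psi\|_{C^\alpha}\bigr)$ gives
\[
\int_{\G\ba G}\phi(ga_t)\psi(g)\,dm^{\BMS} = m^{\BMS}(\phi)m^{\BMS}(\psi) + O\Bigl(\|\phi\|_{C^\alpha}\|\psi\|_{C^\alpha}\bigl(\epsilon^\alpha + \epsilon^{-2k}e^{-\eta t}\bigr)\Bigr).
\]
Choosing $\epsilon = e^{-\eta t/(\alpha+2k)}$ balances the two error terms and produces the estimate of Theorem \ref{main1} for H\"older $\phi,\psi$ with the $C^k$ norms replaced by $C^\alpha$ norms and with decay exponent $\eta' = \eta\,\alpha/(\alpha+2k) > 0$, which indeed depends on the H\"older class through $\alpha$.

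I expect no genuine obstacle here: the mathematical substance of the corollary is entirely contained in Theorem \ref{main1}, and what remains is the routine smoothing argument above. The only points needing attention are bookkeeping: convolving on the correct side so the regularity produced matches the left-invariant $C^k$ norm; tracking the powers of $\epsilon$ so the optimization is legitimate; and observing that it is finiteness of $m^{\BMS}$ --- not its $A$-invariance, which plays no role in this reduction --- that makes the $L^\infty$ comparisons uniform in $t$. If forced to name the most delicate step it would be the first of these, but it is standard.
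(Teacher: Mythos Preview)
Your proposal is correct and follows exactly the approach the paper sketches: approximate $\phi,\psi$ by smooth $\phi_\epsilon,\psi_\epsilon$ via convolution (the paper cites \cite[Lemma 2.4]{GS} and \cite[Appendix]{KM} for this), apply Theorem \ref{main1} to the smooth pair, and then set $\epsilon$ to shrink exponentially in $t$ as in \eqref{choose_epsilon_carefully_for_t_trick}. The paper gives only a one-sentence outline after the statement of the corollary, whereas you have written out the details carefully (including the side of convolution and the explicit optimized exponent $\eta' = \eta\alpha/(\alpha+2k)$), but the underlying argument is the same.
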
 
 This corollary follows from Theorem \ref{main1} by choosing a sequence of approximations $\phi_\e, \psi_\e$ for $\phi, \psi$ as in, for example,  \cite[Lemma 2.4]{GS}, applying Theorem \ref{main1} to the correlation function of $\phi_\e, \psi_\e$, and then setting $\e$ to shrink exponentially in $t$, in the style of \eqref{choose_epsilon_carefully_for_t_trick}.

Using Roblin's transverse intersection argument, this can be rephrased as a statement on the decay of matrix coefficients for the Haar measure $dg$ on $\G \ba G$.
\begin{theorem} \label{main2} There exists $\eta > 0$ and $k \in \mathbb{N}$ such that for any compactly supported functions $\phi, \psi \in C^k(\G \ba G)$ we have an explicit constant $C(\phi, \psi)$ such that
$$ \int_{\Gamma \ba G} \phi(ga_t) \psi(g) dg =C(\phi, \psi) e^{( \delta - n + 1)t} + O(||\phi||_{C^k} ||\psi||_{C^k} e^{(\delta - n + 1 - \eta) t}).$$
as $t \rightarrow \infty$. The constant is given in terms of the Burger-Roblin measures of $\phi, \psi$. Note that the implicit constant here depends on the supports of $\phi, \psi$. 
\end{theorem}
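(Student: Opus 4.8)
The deduction of Theorem~\ref{main2} from Theorem~\ref{main1} is Roblin's transverse-intersection (``thickening'') argument, made quantitative by using the exponential mixing of Theorem~\ref{main1} as input in place of plain mixing. I would first reduce, by a smooth partition of unity, to the case where $\phi$ and $\psi$ are each supported in a single small flow box. Writing $N^+$ and $N^-$ for the horospherical subgroups expanded and contracted by $a_t$, a flow box at $x_0\in\Gamma\ba G$ is $B=x_0\,N^-_\rho A_\rho M N^+_\rho$ for small $\rho$; finitely many such cover $\supp\phi\cup\supp\psi$, so both $C(\phi,\psi)$ and the implied constant will depend on the supports. On $B$ the Haar measure is comparable to the product of Lebesgue on $N^-$, $A$, $N^+$ and Haar on $M$, with a smooth Jacobian tending to $1$ as $\rho\to0$; $m^{\BMS}$ is comparably a continuous density times $d\mu^{\PS}_-\,ds\,dm\,d\mu^{\PS}_+$, where $\mu^{\PS}_\pm$ are the Patterson--Sullivan conditionals on the two horospherical leaves; and $m^{\BR}$, $m^{\BR_*}$ are obtained from $m^{\BMS}$ by replacing $d\mu^{\PS}_+$, respectively $d\mu^{\PS}_-$, by Lebesgue. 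The $M$-direction commutes with $A$ and is dynamically neutral, so it rides along as a Haar factor; this is the only way the passage from $\Gamma\ba G/M$ to $\Gamma\ba G$ enters.

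Next I would disintegrate the Haar measure on $\supp\psi$ along the unstable leaves: writing a point as $yn$ with $n\in N^+_\rho$ and $y$ in a transversal carrying $d\tau\approx dn^-\,ds\,dm$,
\[
\int_{\Gamma\ba G}\phi(ga_t)\psi(g)\,dg=\int\Big(\int_{N^+_\rho}\phi(yna_t)\,\psi(yn)\,dn\Big)\,d\tau(y).
\]
Since $\psi\in C^k$ differs from $\psi(y)$ by $O(\rho\|\psi\|_{C^1})$ on the $\rho$-leaf, the inner integral is $\psi(y)\int_{N^+_\rho}\phi(yna_t)\,dn$ up to an error controlled by $\rho\|\psi\|_{C^1}\int_{N^+_\rho}|\phi(yna_t)|\,dn$, and $\int_{N^+_\rho}\phi(yna_t)\,dn$ is exactly the quantity governed by Roblin's equidistribution theorem. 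Because $a_t$ expands $N^+_\rho$ to $N^+_{e^t\rho}$, whose Patterson--Sullivan mass is $\asymp e^{\delta t}\mu^{\PS}_y(N^+_\rho)$ while its Lebesgue mass is $\asymp e^{(n-1)t}$, and because the resulting long unstable horocycle equidistributes towards $m^{\BR}$ as $t\to\infty$, the quantitative version of this --- obtained by a further thickening of $n\mapsto\phi(yna_t)$ in the $N^-$, $A$ and $M$ directions that reduces it to the exponential mixing of Theorem~\ref{main1} --- reads
\[
\int_{N^+_\rho}\phi(yna_t)\,dn=e^{(\delta-n+1)t}\,\frac{\mu^{\PS}_y(N^+_\rho)}{|m^{\BMS}|}\,m^{\BR}(\phi)+O\!\big(\|\phi\|_{C^k}\rho^{-k}e^{-\eta' t}\big)
\]
for some $\eta'>0$. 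Substituting this back, $\int\psi(y)\,\mu^{\PS}_y(N^+_\rho)\,d\tau(y)$ assembles --- reinstating the leaf variable, up to the Jacobian and an $O(\rho)$ error, and summed over the partition --- into a constant times $m^{\BR_*}(\psi)$, since $d\tau\otimes d\mu^{\PS}_+$ is the local form of $m^{\BR_*}$. One is left with $\int\phi(ga_t)\psi(g)\,dg=e^{(\delta-n+1)t}C(\phi,\psi)+(\text{errors})$ with $C(\phi,\psi)=m^{\BR}(\phi)\,m^{\BR_*}(\psi)/|m^{\BMS}|$ (up to an absolute normalization read off from the densities), which is the claimed shape.

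The delicate points are two. First, the effective equidistribution estimate must hold uniformly in $y$, including for $y$ near the boundary of $\supp m^{\BMS}$, where $\mu^{\PS}_y(N^+_\rho)$ is small and the long unstable horocycle through $ya_t$ returns to the compact core only rarely; here one leans on smoothness and compact support of $\phi,\psi$, on the fact that only a fixed compact neighbourhood of the supports is relevant, and on the observation that it is the $\tau$-integrated rather than the pointwise estimate that is needed, to which the degenerate $y$ contribute negligibly. Extracting this from Theorem~\ref{main1} is the technical core of the thickening argument, and I expect it to be the main work. Second, one must balance the leaf-constancy and Jacobian errors, of size $O(\rho\,e^{(\delta-n+1)t})$, against the mixing error $O(\rho^{-k}e^{-\eta' t})$, by choosing $\rho=e^{-\kappa t}$ with $\kappa k<\eta'$ --- precisely the device used at~\eqref{choose_epsilon_carefully_for_t_trick} --- which yields the theorem with a possibly smaller $\eta$ and makes explicit the dependence of the implied constant on the supports.
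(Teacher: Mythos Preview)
Your outline has the right architecture and lands on the correct constant $C(\phi,\psi)=m^{\BR}(\phi)\,m^{\BR_*}(\psi)/|m^{\BMS}|$, and the final integration over the transversal together with the $\rho=e^{-\kappa t}$ balancing is exactly what the paper does. But the heart of the argument --- the displayed estimate for $\int_{N^+_\rho}\phi(yna_t)\,dn$ --- is not obtainable the way you describe, and this is a genuine gap, not a detail.

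You say this follows by ``a further thickening of $n\mapsto\phi(yna_t)$ in the $N^-$, $A$ and $M$ directions that reduces it to the exponential mixing of Theorem~\ref{main1}.'' But think about which measure such a thickening produces. If you smear a Lebesgue integral on $yN^+$ in the transverse directions you obtain an integral on $\G\ba G$ whose $N^+$-conditional is Lebesgue, not Patterson--Sullivan; that is $m^{\BR_*}$ (or Haar, depending on what you put on $N^-$), not $m^{\BMS}$. Theorem~\ref{main1} gives you nothing about such an integral --- indeed, mixing for Haar is what you are trying to prove. Direct thickening only works when the leafwise measure you start from matches the $N^+$-conditional of $m^{\BMS}$, i.e.\ when it is Patterson--Sullivan.

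The paper therefore proceeds in two steps, and the name ``transverse intersection argument'' refers specifically to the second. First (Proposition~\ref{imp1}) one thickens the \emph{Patterson--Sullivan} integral $\int_{yN^+}\Psi(yna_t)\phi(yn)\,d\mu^{\PS}_{yN^+}$ in the $N^-AM$ directions; this genuinely produces a $m^{\BMS}$-integral and Theorem~\ref{main1} applies. Second (Proposition~\ref{imp2}) one passes from Lebesgue to Patterson--Sullivan on the unstable leaf: one pushes the long horocycle $yN^+_\rho a_t$ forward, records its transverse intersections $z_\gamma$ with a fixed local $P$-section, and builds an auxiliary function $\psi_1$ so that the Lebesgue integral of $\psi$ along each $z_\gamma N^+$ equals the Patterson--Sullivan integral of $\psi_1$ along $z_\gamma N^+_R$ (and hence $m^{\BR}(\psi)=m^{\BMS}(\psi_1)$). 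This swap is what converts the Lebesgue leaf integral into a PS leaf integral, to which step one applies. Your proposal collapses these two steps into one and thereby loses the mechanism that actually makes the reduction work.

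Your concern about degenerate $y$ with small $\mu^{\PS}_y(N^+_\rho)$ is handled not by an integrated estimate but by the choice of the radius $R$ (the paper's $R_0$, built from $\alpha(\mathcal Q,\Lambda(\G))$): one enlarges the $N^+$-window on the target side so that every relevant transverse leaf meets the limit set, making the auxiliary PS integral uniformly nondegenerate. This is where the dependence of the implied constant on the supports enters.
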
 
Stoyanov's work, together with the transverse intersection argument, is sufficient to prove this for $M$-invariant functions, so the novelty is that we do not restrict to trivial $M$-type. One can also extend to H\"older functions. Some care is needed here; if we try to pass directly from Theorem \ref{main2} to the corollary below there are difficulties. See subsection \ref{comment_on_holderness_for_haar} instead. 
\begin{corollary}\label{main2_coro}
The estimate of Theorem \ref{main2} extends to all H\"older functions; the smooth norms in the error term should then be replaced by H\"older norms, and the decay exponent depends on the H\"older class of $\phi, \psi$. 
\end{corollary}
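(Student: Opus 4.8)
The plan is \emph{not} to mollify Theorem \ref{main2} directly. The obstruction to the direct route is that, after replacing $\phi$ by a smooth approximant $\phi_\e$, one must control $\int_{\G\ba G}(\phi-\phi_\e)(ga_t)\psi(g)\,dg$, and the only efficient estimate at hand is $\|\phi-\phi_\e\|_\infty\cdot\int_{\G\ba G}\mathbf 1_{\supp(\phi-\phi_\e)}(ga_t)\,|\psi(g)|\,dg$; the indicator here is not smooth, so making this rigorous means dominating it by smooth test functions and re-feeding them into Theorem \ref{main2}. This is both circular in appearance and fiddly to carry out uniformly in $\e$ (recall the implied constant in Theorem \ref{main2} depends on the supports). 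It is cleaner to step back to $m^{\BMS}$.

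Recall that Theorem \ref{main2} is deduced from Theorem \ref{main1} via Roblin's transverse intersection argument \cite{Ro}: the local product structure of $G$ along the stable and unstable horospherical directions and the flow direction lets one rewrite $e^{(n-1-\delta)t}\int_{\G\ba G}\phi(ga_t)\psi(g)\,dg$ as an integral, over suitable transversals and against Patterson--Sullivan densities, of $m^{\BMS}$-correlations of $\phi$ and $\psi$ thickened in the unstable, resp.\ stable, horospherical directions; the main term is the Burger--Roblin expression $C(\phi,\psi)$ and the error is governed by the rate of mixing for $m^{\BMS}$. The point is that this reduction is insensitive to the regularity of the test functions --- it asks only that $\phi,\psi$ be continuous and compactly supported, so that the thickened functions approximate them as the thickening scale $\e\to0$, with rate $O(\e^\alpha)$ when $\phi,\psi$ are $\alpha$-H\"older. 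I would therefore re-run this argument verbatim, but with Corollary \ref{main1_coro} in place of Theorem \ref{main1}.

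Concretely: given $\alpha$-H\"older compactly supported $\phi,\psi$ and a scale $\e>0$, form $\phi^{(\e)},\psi^{(\e)}$ by convolving $\phi$ (resp.\ $\psi$) with an $\e$-bump supported in the unstable (resp.\ stable) horospherical direction. These obey $\|\phi^{(\e)}-\phi\|_\infty=O(\e^\alpha\|\phi\|_{C^\alpha})$ and --- this is the part to check carefully --- are again $\alpha$-H\"older, of class \emph{uniform} in $\e$, with $\|\phi^{(\e)}\|_{C^\alpha}=O(\e^{-N}\|\phi\|_{C^\alpha})$ for a fixed exponent $N=N(n)$. Applying Corollary \ref{main1_coro} to the $m^{\BMS}$-correlations that appear, integrating over the transversals against the PS densities, and collecting the errors --- the mixing error $\e^{-N'}e^{-\eta_\alpha t}$ and the approximation error $\e^\alpha$ measured against the size $e^{(\delta-n+1)t}$ of the main term --- one chooses $\e=e^{-ct}$ with $c>0$ small, in the style of \eqref{choose_epsilon_carefully_for_t_trick}, to balance these and obtain $\int_{\G\ba G}\phi(ga_t)\psi(g)\,dg=C(\phi,\psi)e^{(\delta-n+1)t}+O(\|\phi\|_{C^\alpha}\|\psi\|_{C^\alpha}e^{(\delta-n+1-\eta')t})$ with $\eta'=\eta'(\alpha)>0$ and implied constant depending on the supports; the main term is well defined because the Burger--Roblin densities are Radon and hence pair continuously with compactly supported continuous functions. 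This is the content of subsection \ref{comment_on_holderness_for_haar}.

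The main obstacle is exactly the uniform-in-$\e$ bookkeeping above: one must verify that thickening an $\alpha$-H\"older function in a horospherical direction really does return an $\alpha$-H\"older function, with only a polynomial loss $\e^{-N}$ in norm and \emph{without} degrading the H\"older exponent (so that the decay rate $\eta_\alpha$ supplied by Corollary \ref{main1_coro} stays bounded below as $\e\to0$), and that the transversal PS-integrals against the thickened functions are controlled uniformly in $\e$ --- i.e.\ that the geometric estimates underpinning Roblin's argument survive being made effective and uniform in the thickening scale. Granted this, optimizing $\e$ against $t$ is routine.
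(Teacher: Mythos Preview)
Your proposal is correct and follows exactly the route the paper indicates in subsection \ref{comment_on_holderness_for_haar}: rather than mollifying Theorem \ref{main2} directly, one first obtains the H\"older mixing statement for $m^{\BMS}$ (Corollary \ref{main1_coro}) and then re-runs the transverse intersection argument of Section \ref{matrix_coefficient_section} with H\"older inputs, optimizing the thickening scale against $t$ as in \eqref{choose_epsilon_carefully_for_t_trick}. Your identification of the main bookkeeping point --- that the thickened functions remain in a fixed H\"older class with only polynomial-in-$\e^{-1}$ norm growth --- is precisely what the paper's argument uses (compare the bound $\|q_\e\|_{C^{r_0}}\ll \e^{-\ell}$ in the proof of Proposition \ref{imp1}).
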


These results are also known by representation theoretic methods for geometrically finite groups $\G$ with $\delta(\G) > (n-2) $ \cite{MO}. There are some challenges to extending their approach to smaller $\delta$, however. In the range $(n-1)/2 < \delta < n-2$, Mohammadi-Oh's argument provides these results conditional on the non-spherical spectral gap conjecture, \cite[Corollary 1.2]{MO}. In the case $0<\delta \leq (n-1)/2$ the representation theoretic path seems completely blocked by existence of continuous spectrum for the hyperbolic Laplacian, so the results here appear to be entirely new. Shifting to our approach, which is based in dynamical systems rather than representation theory, allows us to deal easily with very sparse groups with small critical exponent; the downside to  our approach is that it makes cusps more difficult to deal with. 

There is a well established framework relating mixing properties on the one hand with counting or equidistribution questions on the other \cite{Ma, EM, DRS, Ro, OS, KO, MO}. For our current setting of holonomies in $\G \ba G$ this theory was developed in work of Margulis-Mohammadi-Oh \cite{MMO}. Their arguments will allow us to rephrase our result as an equidistribution statement for holonomies. For any closed geodesic $\alpha$ on $\G \ba \mathbb{H}^n$ of length $\ell(\alpha)$ we have an associated holonomy class $h(\alpha)$, which is a conjugacy class in $M$. Denote by $\mathcal{G}(T)$ the collection of primitive closed geodesics in $\G \ba \mathbb{H}^n$ with length less than $T$.  Since the holonomy associated to a closed geodesic is only defined up to conjugation in $M$, the appropriate notion of equidistribution is with respect to class functions, that is, with respect to functions on $M$ that are constant on conjugacy classes. 

\begin{theorem} \label{main3}  There exists $\eta > 0$ such that for any smooth class function $\phi$ on $M$
$$ \sum_{\alpha \in \mathcal{G}(T)}\phi(  h(\alpha))  = \operatorname{li}(e^{\delta T})  \int_M \phi(m) dm + O(e^{(\delta - \eta)T}).$$
\end{theorem}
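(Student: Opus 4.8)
The plan is to peel off the holonomy weight representation by representation, reducing Theorem \ref{main3} to a twisted version of the prime geodesic theorem whose error term is supplied by the paper's spectral bounds on twisted transfer operators. First, since $\Gamma$ is convex cocompact the non-wandering set of the geodesic flow on $\Gamma\backslash\mathbb{H}^n$ (equivalently on $\Gamma\backslash G/M$) is compact, so that flow is a Hölder suspension over a subshift of finite type $(\Sigma,\sigma)$ with Hölder roof function $\tau$; primitive closed geodesics of length $<T$ correspond essentially bijectively to primitive $\sigma$-periodic orbits with $\tau^n(x)<T$, the coding overlaps contributing only $O(e^{(\delta-c)T})$ for some $c>0$. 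Lifting the coding from $\Gamma\backslash G/M$ to the frame bundle $\Gamma\backslash G$ yields a Hölder cocycle $\Theta\colon\Sigma\to M$ whose periodic products $\Theta^n(x)=\Theta(\sigma^{n-1}x)\cdots\Theta(x)$ realize, up to conjugacy in $M$, the holonomy class $h(\alpha)$. Zariski density of $\Gamma$ is what guarantees that $\Theta$ is not cohomologous to a cocycle into a proper closed subgroup of $M$ — equivalently, that the frame flow is mixing — and hence that the twisted transfer operators below have no peripheral eigenvalue outside the trivial sector.

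Second, for $\rho$ in the unitary dual $\widehat M$, let $\mathcal L_{s,\rho}$ be the transfer operator on vector-valued Hölder functions with weight $e^{-s\tau}$ twisted by $\rho\circ\Theta$, normalized so that in the trivial sector the leading eigenvalue equals $1$ precisely at $s=\delta$ (Patterson--Sullivan/Bowen). The $\rho$-twisted Ruelle zeta function, defined for $\Re s>\delta$ by $\zeta_\rho(s)=\exp\sum_{n\geq1}\tfrac1n\sum_{\sigma^nx=x}\operatorname{tr}\rho(\Theta^n(x))\,e^{-s\tau^n(x)}$, is expressible through Fredholm determinants of operators built from $\mathcal L_{s,\rho}$, hence extends meromorphically with singularities controlled by the spectrum of $\mathcal L_{s,\rho}$. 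Here I would invoke the paper's main technical estimate: $\zeta_{\mathbf 1}$ has a simple pole at $s=\delta$ and is otherwise holomorphic of at most polynomial vertical growth on a strip $\Re s\geq\delta-\eta$; for nontrivial $\rho$, the twisted Ruelle--Perron--Frobenius theorem together with Zariski density gives spectral radius of $\mathcal L_{\delta,\rho}$ strictly below $1$, and the Dolgopyat-type bound propagates this so that $\zeta_\rho$ is holomorphic of controlled growth on $\Re s\geq\delta-\eta_\rho$ — in particular with no pole on $\Re s=\delta$.

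Third, a standard contour shift for $\int\frac{\zeta_\rho'}{\zeta_\rho}(s)\frac{e^{sT}}{s}\,ds$ (the prime geodesic theorem with error term, cf. Parry--Pollicott and Naud) gives $\sum_{\alpha\in\mathcal G(T)}\operatorname{tr}\rho(h(\alpha))=[\rho=\mathbf 1]\operatorname{li}(e^{\delta T})+O_\rho(e^{(\delta-\eta)T})$, once one discards non-primitive contributions ($O(e^{\delta T/2})$) and removes the $\ell(\alpha)$-weight by Abel summation (which turns $e^{\delta T}$ into $\operatorname{li}(e^{\delta T})$). Finally, given a smooth class function $\phi$, write $\phi=\sum_{\rho\in\widehat M}c_\rho\chi_\rho$ with $\int_M\phi\,dm=c_{\mathbf 1}$ and $c_\rho$ decaying faster than any polynomial in the highest weight of $\rho$; split at a slowly growing cutoff $R(T)$, handle the finitely many head terms by the previous display, and bound the tail crudely by $\sum_{\|\rho\|>R}|c_\rho|\dim\rho\cdot\#\mathcal G(T)=O(e^{\delta T})\cdot(\text{super-polynomially small in }R)$; choosing $R(T)$ suitably collapses everything into a single $O(e^{(\delta-\eta)T})$, proving Theorem \ref{main3}. (Alternatively, one may feed Theorem \ref{main1} — exponential mixing of $m^{\BMS}$ under the frame flow — into an effective form of the transverse-intersection counting of Margulis--Mohammadi--Oh; the holonomy enters because it is the full frame flow, not merely the geodesic flow, whose mixing rate is being quantified.)

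I expect the main obstacle to be the uniformity in $\rho$ in the second step: one needs the Dolgopyat-type spectral bound for the $\rho$-twisted operator with a pole-free strip whose width $\eta_\rho$ does not shrink too fast, and with vertical growth controlled, as $\rho$ ranges over $\widehat M$ — and, underlying that, the non-peripherality of $\mathcal L_{\delta,\rho}$ in every nontrivial sector, which is exactly where Zariski density of $\Gamma$ is indispensable. This uniform spectral input is the heart of the paper; granted it, the symbolic coding, the zeta-function bookkeeping, and the character reassembly are by now routine in this circle of ideas (Parry--Pollicott, Naud, Stoyanov, Margulis--Mohammadi--Oh), though each still requires care in the infinite-volume but cusp-free geometry.
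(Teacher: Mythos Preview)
Your proposal is correct, and in fact you sketch two routes: the primary one via twisted Ruelle zeta functions and contour shifting, and (parenthetically) the route through exponential mixing and the Margulis--Mohammadi--Oh counting machinery. The paper takes the second route. It does not give a self-contained proof of Theorem~\ref{main3} at all; in the sketch of proof it simply says that from Theorem~\ref{main2} (decay of matrix coefficients for Haar, itself deduced from Theorem~\ref{main1} via Roblin's transverse intersection argument in Section~\ref{matrix_coefficient_section}) one deduces Theorem~\ref{main3} ``by the arguments of Margulis--Mohammadi--Oh \cite{MMO}.''

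The comparison is worth noting. Your zeta-function approach goes directly from the spectral bound Theorem~\ref{transfer_operator_bounds} to the holonomy count, bypassing both the mixing statement (Theorem~\ref{main1}) and the Haar matrix-coefficient statement (Theorem~\ref{main2}); this is closer in spirit to the companion paper \cite{OWhj} on hyperbolic rational maps, and to Naud's treatment in the untwisted case. The uniformity in $\rho$ that you correctly flag as the main obstacle is supplied precisely by the factor $(1+\|\mu\|)^C$ in Theorem~\ref{transfer_operator_bounds}, together with the super-polynomial decay of the Fourier coefficients $c_\rho$ from Corollary~\ref{2nd_fact_about_decay_of_fourier_modes}; your head--tail split at a slowly growing cutoff $R(T)$ is exactly how one absorbs that polynomial loss. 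The paper's route, by contrast, packages the same spectral input into exponential mixing first and then invokes an effective closing-lemma/equidistribution argument from \cite{MMO}; this buys a cleaner separation between the dynamical input (mixing) and the counting output, and makes the dependence on $\Gamma\backslash G$ rather than on the symbolic coding more transparent, at the cost of the extra intermediate Theorems~\ref{main1} and~\ref{main2}.
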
 

\subsection{Sketch of proof} Our main aim is to prove Theorem \ref{main1}; from there Theorem \ref{main2} follows in a well established manner recounted in Section \ref{matrix_coefficient_section}. From Theorem \ref{main2} one then deduces Theorem \ref{main3} by the arguments of Margulis-Mohammadi-Oh \cite{MMO}. 

Now to the proof of Theorem \ref{main1}. There is a well established framework in place to prove this kind of result. From the flow $(\G \ba G/M, a_t)$ one first constructs a space $\hat U$ and an expanding map $\Pp: \hat U \rightarrow \hat U$. Associated to the expanding maps we then have a family of so-called transfer operators. In our case, because we want to study the compact group extension $\G \ba G$ over the geodesic flow $\G \ba G / M$, we are led to study transfer operators that have been twisted by representations of the compact group $M$. The well-worn arguments of Section \ref{transfer_bounds_are_enough_section} then establish that our exponential mixing result will follow as soon as we can prove certain spectral bounds for these transfer operators; see Theorem \ref{transfer_operator_bounds} and Proposition \ref{transfer_operators_are_engough}. 

At this stage we are left trying to prove the spectral bounds described in Theorem \ref{transfer_operator_bounds}. This type of bound was studied by Dolgopyat \cite{Do}, and we follow his machinery. The challenges of using his work in our case are firstly the fractal nature of the limit set $\Lambda(\G)$ (see \eqref{define_limit_set}) and secondly the twist in our transfer operators coming from the $M$-representation.

The necessary pre-requisites for Dolgopyat's arguments are addressed in Sections \ref{nli_section} and \ref{NCP_section}. The purpose of Section \ref{nli_section} is to assert that the summand terms in the definition of the transfer operator (see \eqref{define_transfer_operator}) are in some sense rapidly oscillating. This rapid oscillation is later used to show that there is cancellation amongst those summand terms at least on a ``reasonably dense'' set. 

The other pre-requisite for Dolgopyat's argument is a density statement for the limit set, establishing that the intersection of $\Lambda(\G)$ with any ``reasonably dense'' set is itself large. This is addressed in Section \ref{NCP_section}; it's analogous to the triadic intersection property discussed by Naud \cite{Na} and Stoyanov. 

Given the pre-requisites described in the previous two paragraphs, it might seem like we're almost done. We want spectral bounds for transfer operators: we know that the summand terms admit cancellation on a large set, and we know that that large set meets the limit set (where the interesting dynamics happens). One just needs to put those pre-requisites together to get spectral bounds. That synthesis turns out to be much more intricate than one might expect. Happily the difficulties at this stage are technical rather than conceptual; now that we have the pre-requisites framed appropriately, we can follow the original argument of Dolgopyat and complete the proof.

\subsection{Acknowledgements and Remarks} First and foremost I would like to thank Ralf Spatzier, wthout whom this paper could in now way have been completed. He was a large part of the development of the underlying ideas, provided very extensive technical help, and was a source of encouragement and good humour throughout. 

I am grateful to many others for technical advice and helpful conversations, amongst them Hee Oh, Michael Magee, Ilya Gekhtman, and Wenyu Pan.

I am particularly grateful to Mark Pollicott both for several useful conversations and for his patience during them. I do now understand that he and Richard Sharp have been pursuing similar arguments, though perhaps with different emphasis. 

\section{Notation and background} \label{sec2}
\subsection{Notation and the structure of $\SO(n, 1)$} We retain the notation of the introduction. $G$ will denote the identity component $\SO(n, 1)_+$ acting isometrically on hyperbolic $n$-space $\mathbb{H}^n$. We choose a base point $o \in \mathbb{H}^n$ and a unit tangent vector $w_0 \in \operatorname{T}_o\mathbb{H}^n$. Then the stabilizer $K$ of $o$ is a maximal compact subgroup of $G$ and is isomorphic to $SO(n)$. It contains the stabilizer $M$ of $w_0$, which is isomorphic to $\SO(n-1)$. We identify 
$$ G/ K = \mathbb{H}^n \mbox{ and } G/M = \operatorname{T}^1(\mathbb{H}^n).$$
We can choose a one paramater diagonalizable subgroup $A = \lbrace a_t: t\in \mathbb{R} \rbrace$ such that $M$ and $A$ commute, and such that the right $a_t$ action on $G/M = \operatorname{T}^1(\mathbb{H}^n)$ corresponds to unit speed geodesic flow. We denote by $N^+$ and $N^-$ the unstable and stable horospherical subgroups respectively:
$$ N^\pm = \lbrace h \in G : a_t h a_{-t} \rightarrow e \mbox{ as } t \rightarrow \pm \infty\rbrace .$$
They are commutative groups isomorphic to $\mathbb{R}^{n-1}$ and are normalized by $M$ and $AM$.

 The geodesic flow or $a_t$ action on $G/M$ is the canonical example of a hyperbolic dynamical system: the strong stable (unstable) manifolds for $w = gM$ are given by
 $$  W^{ss}(w) = wMN^-\mbox{ and }  W^{su}(w) = wMN^+$$
 respectively. The weak stable and unstable manifolds are
  $$  W^{ws}(w) = wMAN^-\mbox{ and }  W^{wu}(w) = wMAN^+.$$

\subsection{Convex cocompact subgroups of $G$} \label{convex_cocompact} We want to study discrete subgroups $\G < G$ via the dynamics (or representation theory) of $\G \ba G$. When $\G$  is a lattice (that is, when $\G < G$ has finite covolume) we have excellent tools available to do this \cite{EM, GN}, so our focus now is on groups that are Zariski dense but sparse, having infinite covolume in $\SO(n ,1)$.

It seems to be very difficult to understand sparse groups in full generality. Certain sub-classes, however, seem more tractable: the simplest of these are convex cocompact groups and, more generally,  geometrically finite groups (see \cite{bowditch93}). For now we focus on convex cocompact groups, as they are the ones most easily amenable to our proof techniques. 

A discrete Zariski dense torsion free subgroup $\G < G$ is said to be convex cocompact if the non-wandering set $\Omega$ for the geodesic flow on $\G \ba G /M$ is compact. We will observe the additional convention that convex cocompact subgroups should be non-lattices. The advantage of the convex cocompactness assumption for our purposes is that all the interesting dynamics for the geodesic flow happens on a compact set. As is widely known, this allows application of tools from the study of Anosov systems. 

\subsection{Limit sets and critical exponents}
For a  discrete  subgroup $\G < G$ we have the limit set 
\begin{equation} \Lambda = \Lambda(\G) = \lbrace \mbox{accumulations points of $\G o$ in $\partial (\mathbb{H}^n)$} \rbrace.\label{define_limit_set} \end{equation}
This is easily seen to be independent of the choice of base point $o \in \mathbb{H}^n$. It is typically a fractal subset of the boundary, whose Hausdorff dimension we denote by $\delta = \delta(\G) \in [0, n-1]$.
We can think of $\delta$ as a measure of how sparse $\G$ is in $G$: if $\G$ is very sparse then $\Lambda$ is small and $\delta$ is close to zero; if $\G$ is ``thick'' (almost a lattice) then $\Lambda$ is large and $\delta$ will be close to $n-1$, which is the Hausdorff dimension of $\partial \mathbb{H}^n$. 

Throughout the rest of the paper we will assume that

$$\G  < G \mbox{ is  convex cocompact and Zariski dense.} $$

\begin{remark}
For $\G$ convex cocompact and Zariski dense we actually have $\delta(\G) \in (0, n-1)$: this follows from existence of a unique Patterson-Sullivan density of dimension $\delta$ on $\Lambda(\G)$ in this case, and from finiteness of the measures of maximal entropy. These questions are discussed at greater length in Section \ref{matrix_coefficient_section}. 
\end{remark}

\subsection{Previous successes of a dynamical approach}  Our approach will be to view the geodesic flow on the non-wandering set $\Omega$ as a hyperbolic dynamical system and to apply the thermodynamic formalism. The general achievements of this viewpoint are too numerous to recount here, but there have in particular been many successes in areas related to our current interests: Lalley \cite{La} used these techniques to understand orbit counting questions for thin subgroups of $\SL_2(\mathbb{R})$; Naud \cite{Na} refined Lalley's work to give precise error estimates; Bourgain-Gamburd-Sarnak \cite{BGS} combined the thermodynamic formalism with expander graph techniques to explore number theoretic aspects of thin group orbits; Stoyanov \cite{St} used thermodynamic techniques to obtain exponential mixing of the geodesic flow on $\G \ba G / M$ for convex cocompact groups (amongst other systems).

In amongst these beautiful and powerful ideas we should emphasize the focus of our current discussion: we want to understand holonomies as in Theorem \ref{main3} with good error terms.  This requires a good quantitative understanding of $\G \ba G$ rather than $\G \ba G / M$, so we are working with compact group extensions of a hyperbolic system. The qualitative understanding of $\G \ba G$ is provided by work of Parry-Pollicott \cite{PP} or Brin \cite{Br}. The quantitative understanding of the base space $\G \ba G / M$ is provided by Stoyanov \cite{St} using ideas of Dolgopyat \cite{Do}. Our task is in some sense to provide a synthesis of the Parry-Pollicott/Brin and Stoyanov/Dolgopyat approaches. 

The current work could reasonably be regarded as a partner paper to \cite{OWhj}, which described similar ideas in the setting of holomorphic dynamics.

\subsection{Further connections} The aim of this paper is to describe how Dolgopyat's ideas can be used to give exponential decay of correlations for a compact group extension of a hyperbolic flow. In fact, though, the same ideas should give much more.

Firstly, one should be able to deduce a central limit theorem for the $a_t$ action from the results of this paper. 

Secondly, it is often useful for number theoretic applications to understand how the exponent in the error term of Theorem \ref{main1} changes on passage to a finite index subgroup of $\G' < \G$.  Equivalently one asks how the mixing rate changes on passage to a finite cover of our given convex-cocompact hyperbolic manifold. As in \cite{MOW}, one would expect to be able to answer these questions by combining our current discussion with an analysis of expander graph properties of the covering map data. By carrying out this synthesis one would expect to recover the sieve/number theoretic results of \cite{MO}.  

Finally, it is reasonable to hope that our current discussion should contribute to a proof of the non-spherical spectral gap conjecture of Mohammadi-Oh \cite[Conjecture 1.2]{MO}.

\section{Suspension space models and transfer operators} \label{transfer_bounds_are_enough_section}

Our aim is to use ideas from the thermodynamic formalism to understand representation theoretic properties of $\Gamma \ba G$. In this section we build the appropriate model for the $A$ action on $\G \ba G$, construct the required transfer operators, and describe how spectral bounds for these transfer operators will give the main results we want.  Experts in the thermodynamic approach will (hopefully) find few surprises in this section, but we will attempt an account for the uninitiated nonetheless. 

We first recall the basic properties  of Markov sections and outline a particular construction, with certain special properties.  These  will later be used to give extra control of oscillatory properties of smooth functions

\subsection{Markov sections} \label{Markov_sections} Under our running assumptions, the non-wandering set $\Omega$ of $\G \ba G / M$ is a compact $A$-invariant set, and forms a hyperbolic set for the flow. For a point $z\in \Omega$ and small sets $U$ (respectively $S$) contained in the strong unstable  manifold $W^{su}(z) \cap \Omega$ (respectively the strong stable manifold $W^{ss}(z) \cap \Omega$) through $z$ we have a rectangle given by 
$$  [U, S] = \lbrace [u, s] = \mbox{the unique local intersection of $W^{ss}(u) \cap W^{wu}(s)$}:u \in U, s \in S \rbrace.$$
Abusing notation slightly we write $[u, S] := [\lbrace u \rbrace, S]$ and $[U, s] := [U, \lbrace s \rbrace]$ for elements $u\in U$ and $s \in S$.

Given a finite collection of points $z_i\in \Omega$ with $1 \leq i \leq k_0$ and small pieces $z_i \in U_i$ (respectively $z_i\in S_i$) of $W^{su}(z_i) \cap \Omega$ (respectively $W^{ss}(z_i) \cap \Omega)$ we form the rectangles $R_i = [U_i, S_i] \ni z_i$. Write
$$U = \cup U_i, S = \cup S_i, R = \cup R_i $$
for the unions. We say that the collection of rectangles $\lbrace R_i, i = 1, \ldots, k_0\rbrace$ is a Markov section of size $\epsilon >0$ for the flow if they are disjoint and

\begin{itemize}
\item each of the sets $U_i, S_i, R_i$ is compact of diameter at most $\epsilon$;
\item each set $U_i$ is equal to the closure of it's interior $U_i^o$ in $\Omega \cap W^{su}(z_i)$;
\item each set $S_i$ is equal to the closure of it's interior $S_i^o$ in  $\Omega \cap W^{ss}(z_i)$;
\item the first return map $\poi: R := \coprod R_i \rightarrow R$ satisfies
\begin{enumerate}
\item $\poi([U_i^o, s]) \supset [U_j^o, s']$ and 
\item $ [u, S_i^o] \subset \poi^{-1}([u, S_j^o] ) $ 
\end{enumerate}
whenever $\poi([u, s]) = [u', s']$ with $u, u', s, s'$ in the interiors of $U_i, U_j, S_i, S_j$ respectively;
\item $\Omega = Ra_{[0, \epsilon]}$;
\item  for every pair $i \neq j$ either $R_i a_{[0, \epsilon]} \cap R_j = \emptyset$ or $R_j a_{[0, \epsilon]} \cap R_i = \emptyset$. 
\end{itemize}
 
The existence of Markov sections of arbitrarily small size is a consequence of ideas of Bowen \cite{Bowen73} and Ratner \cite{Ra}.  As written their arguments apply to the case of a compact manifold, however it is well understood that the same results hold in our setting. One reference is work of Pollicott \cite{Polli87}, who gave an account of Bowen's argument for Smale flows; the process is to start with a relatively general cross section to the flow, and then within that cross section to build rectangles satisfying the Markov properties.  The flexibility in choosing a cross section will be important to us since we will need to pick a cross section that lies in a smooth submanifold to utilize the smoothness properties of the functions under consideration. 

The Markov section provides a model for the flow as follows. We write $\tau :R \rightarrow \mathbb{R}^+$ for the first return time map (so that $\poi(x) = xa_{\tau(x)})$, and define the equivalence relation 
$$ (x, t + \tau(x)) \sim (\poi(x), t)$$
on $R\times \mathbb{R}$. We write
$$R^\tau := R\times \mathbb{R} / \sim.$$
Then the map $\pi: R \times \R \rightarrow \G \ba G / M$ given by $(x, t) \mapsto xa_t$ is a semi-conjugacy intertwining the flow $\mathcal{G}_s(x, t) := (x, t+s)$ and the geodesic flow. 

The model $R^{\tau}$ is unpleasant to work with; in particular the map $\poi$ is not continuous on boundaries $\overline{\poi^{-1}(R_i)} \cap \overline{\poi^{-1}(R_j)}$. To work around this we restrict our flow to a large subset of $R^\tau$. More precisely we write
$$ \hat{R} := \lbrace x \in R: \poi^k(x) \in \coprod  [U_j^o, S_j^o] \mbox{ for all } k \in \mathbb{Z}\rbrace.$$
This is a residual subset of $R$, and $\hat{R}^\tau = (\hat{R} \times \mathbb{R} / \sim )\subset R^\tau$ is therefore a flow invariant residual subset. The restriction of $\pi : R^\tau \rightarrow \G \ba G / M$ to $\hat{R}^\tau$ is then continuous and bijective. The image $\pi(\hat{R}^\tau)$ has full BMS measure, which makes this model suitable for addressing mixing questions. 

This isn't quite the model we will work with: firstly it's a model for geodesic flow rather than frame flow; secondly it is both possible and beneficial to replace the hyperbolic map $(\hat R, \poi)$ with an expanding map $(\hat U, \Pp)$; finally many of our later arguments will need a degree of smoothness, which isn't available here. We will address these refinements one by one in the rest of this section, but first we need to translate the BMS measure into this language. 

\subsection{Equilibrium states and the measure of maximal entropy} We refer to Chernov \cite[Section 4]{Che} for relevant background information. Both the discrete time system $(\hat R, \poi)$ and the continuous time system $(\hat R^\tau, \mathcal{G}_t)$ carry interesting measures. For a Lipschitz function $f$ on $\hat R$ define the pressure of $f$ to be the supremum
$$ Pr(f) = \sup_{\mu} \left( \mbox{entropy}_\nu(\poi) + \int f d\nu \right) $$
over all $\poi$-invariant probability measures on $\hat R$. In fact there is a unique probability measure $\nu_f$ that achieves this supremum; it is called the $f$ equilibrium state. For our purposes the interesting example is $\nu:= \nu_{-\delta \tau}$.

Our interest in $\nu$ comes from its connection to the measure of maximal entropy. We form a probability measure $\frac{1}{\int \tau d\nu } d\nu \times dt$ on $\hat R^\tau$. This is flow invariant, gives the unique measure of maximal entropy for the flow $\mathcal{G}_t$ on $\hat R^\tau$, and has measure theoretic entropy $\delta$. It follows that the normalized pushforward
 $$\frac{1}{\int \tau d\nu} \pi_* \left(d\nu \times dt \right) = m^{\BMS}$$
 coincides with the BMS measure on $\G \ba G$. 
\subsection{A model for frame flow} \label{A_model_for_frame_flow}
 The frame bundle $\G \ba G$ forms a principal $M$ bundle  over $\G \ba G /M$. We build a section $F$ of this bundle over $R$ as follows. First trivialize the bundle over each $z_i$ by choosing a frame at each point. We extend the trivialization  first to all of $U_i$ by demanding that our choices of frame over $u\in   U_i$ all be backward asymptotic to that over $z_i$. Similarly we extend the choice to all of $ R_i$ by demanding $F([u, s])$ and $F([u, s'])$ be forwards asymptotic whenever $u \in U_i$ and $s, s' \in S_i$.

\begin{definition}Similar to the first return time $\tau$ we have a holonomy map $\theta : R \rightarrow M$ with the property that
\begin{equation} 
F(x) a_{\tau (x)} = F(\poi(x)) \theta (x) ^{-1}
\end{equation} 
\end{definition}


\begin{lemma} The functions $\tau$, $\theta$ are constant along strong stable leaves.  
\end{lemma}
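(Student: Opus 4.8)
The plan is to reduce the lemma to two structural facts about $G=\SO(n,1)_+$ recorded in Section \ref{sec2}: that $A\cap MN^-=\{e\}$ and that $M\cap N^-=\{e\}$, both immediate since $M$ is compact while $A$ and $N^-$ are torsion-free and $N^-$ has no nontrivial compact subgroup. The role played by the Markov property of $\poi$ will be exactly to guarantee that $\poi$ carries each local strong stable leaf into a single strong stable leaf; the rigidity of the $AMN^-$-decomposition then pins $\tau$ and $\theta$ down on that leaf.

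\textbf{The function $\tau$.} Fix $i$, a point $u\in U_i^o$, and two points $x=[u,s]$, $y=[u,s']$ with $s,s'\in S_i^o$; since $[u,s],[u,s']\in W^{ss}(u)$ these lie on a common strong stable manifold, so it suffices to prove $\tau(x)=\tau(y)$, the statement for all of $R_i$ then following by continuity of $\tau$ on the compact set $S_i$. By the Markov property $\poi([u,S_i^o])\subset [u',S_j^o]$, where $\poi(x)=[u',s']$, so $\poi(y)\in W^{ss}(u')=W^{ss}(\poi(x))$. Working in $\G\ba G/M$ and using $W^{ss}(gM)=gMN^-$, write $y=x\bar n$ and $\poi(y)=\poi(x)\,n$ with $\bar n, n\in N^-$ (equalities of cosets). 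Then from $\poi(x)=xa_{\tau(x)}$ and $\poi(y)=ya_{\tau(y)}$ we get $x\bar n\,a_{\tau(y)}\equiv xa_{\tau(x)}n\pmod M$, hence
$$\bigl(a_{-\tau(x)}\bar n\,a_{\tau(x)}\bigr)\,a_{\tau(y)-\tau(x)}\in N^-M .$$
Since the first factor lies in $N^-$, this forces $a_{\tau(y)-\tau(x)}\in MN^-$, and $A\cap MN^-=\{e\}$ gives $\tau(x)=\tau(y)$.

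\textbf{The function $\theta$.} Here I would invoke the construction of the section $F$ in subsection \ref{A_model_for_frame_flow}: by design $F([u,s])$ and $F([u,s'])$ are forward asymptotic whenever $u\in U_i$ and $s,s'\in S_i$, and unwinding the definition of $N^-$ this means precisely $F([u,s])^{-1}F([u,s'])\in N^-$, with no stray $M$-component (an $M$-component of $F(x)^{-1}F(y)$ cannot contract under $a_t$, as $A$ and $M$ commute). With $x,y$ as above and $\tau:=\tau(x)=\tau(y)$, rewrite the defining identity as $\theta(x)=a_{-\tau}F(x)^{-1}F(\poi(x))$, and similarly for $y$. Substituting $F(y)=F(x)\,n_1$ and $F(\poi(y))=F(\poi(x))\,n_2$ with $n_1,n_2\in N^-$ — the latter using the Markov property to know $\poi(x)$ and $\poi(y)$ share the $U_j$-coordinate $u'$ — yields
$$\theta(y)=a_{-\tau}\,n_1^{-1}\,F(x)^{-1}F(\poi(x))\,n_2=\bigl(a_{-\tau}n_1^{-1}a_\tau\bigr)\,\theta(x)\,n_2=\hat n\,\theta(x)\,n_2,$$
with $\hat n:=a_{-\tau}n_1^{-1}a_\tau\in N^-$. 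Pushing $n_2$ past $\theta(x)\in M$, which normalizes $N^-$, gives $\theta(y)=(\hat n\,n')\,\theta(x)$ with $n'\in N^-$, so $\hat n n'=\theta(y)\theta(x)^{-1}\in M\cap N^-=\{e\}$, and therefore $\theta(x)=\theta(y)$.

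The main obstacle, I expect, is bookkeeping rather than substance: one must track carefully which identities hold in $G$ and which only modulo $M$ in $G/M$, and one must be sure that the normalization of $F$ really places all frames over a common $U_i$-point into a single $N^-$-orbit with no residual $M$-factor — this is exactly the forward-asymptotic condition used to build $F$, so it is available, but it is the hinge of the whole argument. A secondary and routine point is the passage from interior points, where the Markov relations are phrased, to all of $R_i$, which is handled by continuity and compactness.
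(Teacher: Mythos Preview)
Your argument is correct and follows exactly the route the paper indicates: the Markov property forces $\poi$ to carry each local stable leaf $[u,S_i^o]$ into a single stable leaf $[u',S_j^o]$, which pins down $\tau$, while the forward-asymptotic normalization of the section $F$ along stable leaves pins down $\theta$. The paper's own proof is a two-sentence sketch ("comes precisely from the Markov property" and "by the properties of the section $F$"), so what you have written is simply a careful unpacking of those assertions rather than a different approach.
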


\begin{proof}
That $\tau$ is constant along strong stable leaves in $R_i$ comes precisely from the Markov property.  Also, $\theta$ is constant along strong stable leaves in $R_i$ by the properties of the section $F$.
\end{proof}

\begin{Nota}
We write $\Phi(x) := a_{\tau(x)}\theta(x) \in AM$; this will save space later. We will denote the ergodic sums and products by
$$ \tau^{(n)}(x) = \sum_0^{n-1}\tau(\poi^j(x)),  \theta^{(n)}(x) = \theta(x) \theta(\poi(x)) \ldots \theta(\poi^{n-1}(x))$$
and 
$$  \Phi^{(n)}(x) = \Phi(x) \Phi(\poi(x)) \ldots \Phi(\poi^{n-1}(x))$$
\end{Nota}

\begin{remark}
The functions $\Phi$ are strongly related to Brin-Pesin moves, as will be made precise in Lemma \ref{Brin_pesin_versus_path_maps}. The details of this correspondence arise out of our particular choice of Markov section $R$ and out of the choice of the framing $F$.  
\end{remark}

We  have a map $\hat R \times \R \to \Gamma \ba G/ M, (x, t) \mapsto x a_t$. We want to extend this to a map  $\hat R \times \R \times M \to \Gamma \ba G, (x, t, \theta) \mapsto F(x) a_t \theta$.  Then we have
$(x, \tau (x), e) \mapsto F(x) a_{\tau(x)}  = F(\poi(x)) \theta ^{-1} (x) = (\poi(x), 0,\theta ^{-1} (x))$. 
Hence we define an equivalence relation $\sim$ on $\hat R \times \R \times M$ such that 
$$(x, \tau(x), e)  \sim (\poi(x), 0, \theta ^{-1} (x)).$$

To summarize, we have  a  model for the frame flow: let
\begin{equation} \hat R^{\tau, \theta} := \hat R \times M \times \mathbb{R} / \sim \end{equation}
where $\sim$ is the relation generated by
\begin{equation} ([u, s], t + \tau(u), \theta) \sim (\poi([u, s]),t,  \theta^{-1} (u)\theta  ). \end{equation}
This system has an obvious flow which we shall denote 
$$\mathcal{G}_t(x, s, \theta) = (x, s+t, \theta).$$ 

\subsection{Semiflows, and expanding maps} Next we describe the usual method for replacing the hyperbolic map $\poi: R \mapsto  R$ with an expanding map. Denote 
$$\Pp  =   \mbox{proj}_S\circ \poi: U \rightarrow U $$
for the map given by first including $U$ into $R$, then applying $\poi$, and finally projecting back to $U$ along stable leaves. We want to think of $\Pp$ as an expanding map on $U$; unfortunately it is discontinuous as written, so we need a little more care. 

Like the residual subset $\hat R \subset R$ we have also a residual subset
$$ \hat U := \lbrace u \in U: \Pp^k(u) \mbox{ is an interior point of $U$ for all } k \in \mathbb{Z}_{\geq 0}\rbrace.$$
The point is that $\hat U$ is a large $\Pp$ invariant set on which $\Pp$ is continuous.

\begin{Nota} Abusing notation slightly, we will also write $\nu$ for the projection of the equilibrium state $\nu$ from $\hat R$ to $\hat U$. \end{Nota}
 
  \begin{definition} Let 
\begin{equation} \hat U^{\tau, \theta} = \hat U \times M \times \mathbb{R}_{\geq 0} / \sim \end{equation}
where $\sim$ now denotes the relation generated by 
\begin{equation} (u,  t + \tau(u), \theta) \sim (\Pp(u), t, \theta(u)^{-1} \theta ).\end{equation}
\end{definition}

This space also has a natural semi-flow which we also denote by $\mathcal{G}_t$. Exponential mixing of this semi-flow will imply exponential mixing of $\hat R^{\tau, \theta}$ by a hyperbolicity argument - see Lemma \ref{semi_flow_mixing}. On the other hand we will see later in this section that spectral bounds for certain transfer operators imply exponential 
mixing of $\hat U^{\tau, \theta}$.

Before that we must address one last technical point: a priori $\hat U$ is only a topological space, while our later arguments will require us to think of it as a smooth manifold. The point, of course, is that the complicated set $\hat U$ is naturally embedded into the union of the manifolds $z_iN^+$. Our next task is to make this precise.

\subsection{Smooth structure for $U, \theta$, and $\tau$} \label{smooth_structures} We want to do calculus on $U, \theta$, and $\tau$; this is messy, since $U$ is fractal in nature. We now recall how to embed everything into a smooth manifold and deal with that technicality.

\begin{Nota}
We will write $\mathrm{Tr}_{ij} = 1$ whenever $\poi[U^o_i, S^o_i]$ meets $[U^o_j, S^o_j]$, and $\mathrm{Tr}_{ij} = 0$ else.  This gives a $k_0 \times k_0$ matrix of zeros and ones which we call the transition matrix. 
\end{Nota}
We take this opportunity to fix one more piece of notation.
\begin{Nota} We say a sequence $i_1 \ldots i_l \in \lbrace 1 \ldots k_0 \rbrace^l$ is admissible if $\mathrm{Tr}_{i_j, i_{j+1}} = 1$ for each $j = 1 \ldots l-1$. For an admissible sequence $(i_1 \ldots i_l)$ we denote the cylinder 
$$ \cyl(i_1, \ldots i_l) := \lbrace u \in \hat U: \Pp^{j-1}(u) \in U_{i_j} \mbox{ for all } j = 1 \ldots l\rbrace.$$\end{Nota}

 While the rectangles $R_i = [U_i, S_i]$ are fractal sets, they nonetheless come embedded in smooth cross sections to the analytic flow. We can choose open neighbourhoods $\tilde U_i$ of $U_i$ in $W^{su}(z_i)$. We write $\tilde U$ for the union. This allows us to talk precisely about $C^1$ functions on fractal sets $U_i$: we will mean that the function is the restriction of a $C^1$ function on $\tilde U_i$. 

We write, for reasons that will become clear later, $\tilde U = \coprod \tilde U_i$ for the {\it disjoint} union of the $\tilde U_i$. The inclusion of $\hat U_i$ into $\tilde U_i$ allows us to think of $\nu$ as a measure on $\tilde U$.


 Note that $\Pp$ does not extend naively to a map on $\tilde U$; the expanding nature of the map precludes any possibility that the image would land in $\tilde U$. However the situation is much better for the inverse to $\Pp$. By the conditions on our Markov sections, and since $\Pp$ is eventually expanding, we may assume that the neighborhoods $\tilde U_i$ have the following properties: whenever $\mathrm{Tr}_{ij} = 1$ there is an inverse function $\Pp^{-1} = \Pp_{ji}^{-1} : \tilde U_j \rightarrow \tilde U_i$ given as the local intersection of $W^{su}(z_i) \cap W^{ws}(x)$. These inverses are eventually contracting in the following sense: for any admissible sequence $(i_1 \ldots i_k)$, the composition
$$ \Pp^{-1}_{(i_1 \ldots i_k)} :=  \Pp^{-1}_{i_1 i_2} \circ  \ldots \circ  \Pp^{-1}_{i_{k-1} \ldots i_k} : \tilde U_{i_k} \rightarrow \tilde U_{i_1}$$ 
satisfies, for any unit vector $v$,
\begin{equation} \frac{c_0}{ \kappa_1^k} < | \nabla  (\Pp^{-1}_{(i_1 \ldots i_k)} ) \cdot v  | < \frac{1}{c_0\kappa^k} \label{definec_0kappakappa_1} \end{equation}
for appropriate constants $c_0 \in (0, 1)$ and $1< \kappa < \kappa_1$ chosen independent of $k$. We will sometimes refer to a map of this form as a section for $\Pp^k$ defined on $U_{i_k}$.

Note that $\tau$ and $\theta$ extend to give smooth versions of the first return time and holonomy functions
$$\tau_{ij}: \tilde U_i \rightarrow \mathbb{R}, \theta_{ij} : \tilde U_i \rightarrow M$$
defined as the $A$ or $M$ displacement from $x \in \tilde U_i$ to $W^{su}(\Pp(x))$, with $\tau_{ij} = \tau$ and $\theta_{ij} = \theta$ on the interior of $U_i \cap \Pp^{-1}(U_j)$.

\begin{Nota} We write $\tilde U$ for the disjoint union of the $\tilde U_i$.\end{Nota}

\begin{remark} The functions $\tau_{ij}, \theta_{ij}$ really do depend on both $i$ and $j$ \end{remark}

\begin{remark} A priori the statements above seem clear under the assumption that the maps $\Pp^{-1}_{ij}$ are strictly contracting. However it is straightforward to modify the natural metric on $U$ so as to achieve strict contraction for $\Pp^{-1}$; one need only sum the metric over long and sensibly chosen orbits of $\Pp^{-1}$. 
\end{remark}


\subsection{Functions of fixed $M$-type} Our aim is to understand mixing for functions $\phi, \psi$ on $\G \ba G$. We think of $\G \ba G$ as a principal $M$ bundle over $\G \ba G / M$. The idea is to break $\phi, \psi$ down into ``Fourier types'' for $M$ and prove exponential mixing one type at a time. We need to describe how to do this, and we need to be careful that when we add the various types back together at the end the resultant sum will be convergent. We explain the required properties here.

  Consider the representation of $M$ on $L^2(M)$.  By the Peter Weyl theorem, $L^2(M) =  \oplus _{\mu} (\dim \mu) V_{\mu}$ where the summation is over irreducible representations of $M$.  Given a vector $v \in L^2(M)$, write $v = \sum v_{\mu}$ for its decomposition into isotypical vectors.  

\begin{lemma} \label{1st_fact_about_decay_of_fourier_modes}
Suppose $v \in L^2(M)$ is a smooth vector.  Then for any $m>0$:
\begin{equation}
 \| v_{\mu} \|  \leq c(\mu) ^{-m} (\dim \mu)^2 \|v\|_m    
\end{equation} 
Here $c(\mu)$ is the eigenvalue of the Casimir operator of $\mu$, and $|| \cdot||_m$ denotes the $C^m$ norm.  The Casimir eigenvalue is given by
\begin{equation}
c(\mu) = 1 + q(\lambda (\mu) + \rho) - q (\rho) 
\end{equation} 
where $\rho $ as usual is half the sum of the positive roots, $q$ is a positive definite quadratic form on the dual space of the maximal torus in $M$, and $\lambda$ is the highest weight vector for $\mu$.
\end{lemma}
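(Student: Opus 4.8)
The plan is to estimate the isotypical component $v_\mu$ by exploiting the fact that it is an eigenvector for the Casimir operator $\mathcal{C}_M$ of $M$ with eigenvalue $c(\mu)$, and then trading powers of this eigenvalue against derivatives of $v$. First I would recall that the Casimir operator $\mathcal{C}_M$ acts on $L^2(M)$, commutes with the (left, say) regular representation, and acts on the $\mu$-isotypical subspace as the scalar $c(\mu)$. By the structure theory used to produce the formula for $c(\mu)$, this scalar is $1 + q(\lambda(\mu)+\rho) - q(\rho) \geq 1$, so in particular it is positive and grows like $\|\lambda(\mu)\|^2$ as $\mu$ ranges over $\widehat M$.

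Next I would write, for any $m \geq 1$, the identity $v_\mu = c(\mu)^{-m}\,\mathcal{C}_M^m\, v_\mu = c(\mu)^{-m}\,(\mathcal{C}_M^m v)_\mu$, using that $\mathcal{C}_M$ is diagonal with respect to the isotypical decomposition and that taking the $\mu$-component commutes with applying $\mathcal{C}_M^m$ (since the latter preserves each $V_\mu$). Now $\mathcal{C}_M^m$ is a left-invariant differential operator on $M$ of order $2m$, so $\|\mathcal{C}_M^m v\|_{L^2(M)} \leq C_m \|v\|_{2m}$ where $\|\cdot\|_{2m}$ is the $C^{2m}$ norm (or one could phrase this with the $L^2$ Sobolev norm and absorb the comparison constants); relabelling $m$ for $2m$ if one wants the $C^m$ norm literally. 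Finally, projection onto the $\mu$-isotypical subspace is norm-decreasing on $L^2(M)$: $\|(\mathcal{C}_M^m v)_\mu\|_{L^2(M)} \leq \|\mathcal{C}_M^m v\|_{L^2(M)}$. Combining these gives $\|v_\mu\|_{L^2(M)} \leq c(\mu)^{-m} C_m \|v\|_{2m}$. To pass from the $L^2(M)$ norm of $v_\mu$ to whatever ambient norm $\|v_\mu\|$ denotes, one uses that $v_\mu$ lives in the finite-dimensional space $(\dim\mu) V_\mu$, on which all norms are comparable with a constant that can be taken polynomial in $\dim\mu$; a crude bound via the reproducing/matrix-coefficient structure of $V_\mu$ (the projection kernel is $(\dim\mu)\overline{\chi_\mu}$, whose sup norm is at most $(\dim\mu)^2$) produces the stated factor $(\dim\mu)^2$.

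I expect the only genuine subtlety — and the main obstacle — to be bookkeeping the norm conversion: getting from an $L^2(M)$-estimate on $v_\mu$ to the particular pointwise/ambient norm $\|v_\mu\|$ in the statement with the clean constant $(\dim\mu)^2$, rather than something merely polynomial in $\dim\mu$. This is where one must be slightly careful about which norm $\|\cdot\|$ and which norm $\|\cdot\|_m$ the paper has in mind; the natural route is to use that $v_\mu = (\dim\mu)\,(v * \overline{\chi_\mu})$ is a convolution, so $\|v_\mu\|_{C^0} \leq (\dim\mu)\,\|v\|_{C^0}\,\|\chi_\mu\|_{L^1} \leq (\dim\mu)^2\|v\|_{C^0}$, and then run the Casimir trick on $v$ before convolving (the two operations commute). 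Everything else — positivity and growth of $c(\mu)$, diagonality of $\mathcal{C}_M$ on isotypical pieces, boundedness of $\mathcal{C}_M^m : C^{2m} \to C^0$ — is standard representation theory of the compact group $M = \SO(n-1)$ and needs no new ideas.
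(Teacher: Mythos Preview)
Your proposal is correct and is precisely the standard Casimir-power argument one uses to obtain such decay estimates; the paper itself does not give a proof at all but simply cites Warner \cite[Lemmas 4.4.2.2 and 4.4.2.3]{Warner}, so you have in effect supplied the argument that the cited reference contains. Your identification of the one delicate point --- matching the precise norm on $v_\mu$ to produce the clean $(\dim\mu)^2$ factor via the convolution formula $v_\mu = (\dim\mu)(v * \overline{\chi_\mu})$ --- is apt, and is handled in Warner exactly along the lines you sketch.
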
 

\begin{proof}
\cite[Lemma 4..4.2.2 and 4.4.2.3]{Warner} 
\end{proof}



\begin{Nota} For an $M$-representation $(\mu, V_\mu)$ and a real number $b$ we have an $MA$-representation $(\mu_b, V_\mu)$ by taking
$$ \mu_b(a_tm) \cdot v = e^{ibt} \mu(m) \cdot v.$$

\end{Nota}

\begin{Nota} Fix now a representation $\mu$ for $M$ acting unitarily on a complex vector space $V$ and a real number $b$. For any fixed unit vector $v \in V$ the orbit map $m \mapsto \mu_b(ma_t)\cdot v$ is Lipschitz; we write $||\mu_b||$ for the worst Lipschitz constant of any unit vector $v$. \end{Nota}

\begin{Nota} \label{define_d_0} We write $d_0 > 0$ for the minimal value of $||\mu_b||$ as $b$ ranges over $\mathbb{R}$ and $\mu$ ranges over {\it non-trivial} $M$ representations. Since the exponential mixing is already known at the level of the unit tangent bundle, we need only deal with these non-trivial $M$ representations.   \end{Nota}

One simple consequence of Lemma \ref{1st_fact_about_decay_of_fourier_modes}  (together with the Weyl dimension formula) is as follows.

\begin{corollary} \label{2nd_fact_about_decay_of_fourier_modes}
For any positive $m$ there exists $r_0\in \mathbb{N}$ and $C > 0$ such that
$$ ||v_\mu|| \leq \frac{C ||v||_{C^{r_0}}}{||\mu||^m}$$
for any smooth vector $v \in L^2(M)$. 
\end{corollary}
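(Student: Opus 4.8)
The plan is to feed the estimate of Lemma~\ref{1st_fact_about_decay_of_fourier_modes} into a few elementary size comparisons between the three natural ``complexity'' parameters of an irreducible $M$-representation $\mu$: the Casimir eigenvalue $c(\mu)$, the dimension $\dim\mu$, and the Lipschitz constant $\|\mu\|$ of the orbit map appearing in Notation~\ref{define_d_0}. All three are controlled by the highest weight $\lambda(\mu)$, so the whole corollary reduces to matching up exponents: Lemma~\ref{1st_fact_about_decay_of_fourier_modes} gives an arbitrarily large negative power of $c(\mu)$ at the cost of raising the smoothness index, and we just need to check that this power eventually beats the fixed polynomial growth of $\dim\mu$ and $\|\mu\|$.

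First I would record the two relevant upper bounds. By the Weyl dimension formula $\dim\mu=\prod_{\alpha>0}\frac{\langle\lambda(\mu)+\rho,\alpha\rangle}{\langle\rho,\alpha\rangle}$ is a polynomial in $\lambda(\mu)$ of degree $D:=\#\{\alpha>0\}$, so $\dim\mu\le C_1(1+\|\lambda(\mu)\|)^D$. Since $q$ is positive definite and both $\rho$ and $\lambda(\mu)$ lie in the closed dominant chamber, the formula $c(\mu)=1+q(\lambda(\mu)+\rho)-q(\rho)$ gives $c(\mu)\ge 1$ together with $c_2(1+\|\lambda(\mu)\|)^2\le c(\mu)\le C_2(1+\|\lambda(\mu)\|)^2$ for fixed $0<c_2\le C_2$ (for the finitely many dominant weights of bounded norm these comparisons are trivial, and for large $\|\lambda(\mu)\|$ the quadratic part of $q$ dominates the linear and constant terms). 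Hence $\dim\mu\le C_1'\,c(\mu)^{D/2}$. For the Lipschitz constant I would use the Casimir identity $\sum_i d\mu(X_i)^2=-(c(\mu)-1)\,\mathrm{Id}$ over an orthonormal basis $\{X_i\}$ of $\mathfrak m$: as each $d\mu(X_i)$ is skew-Hermitian this forces $\|d\mu(X)v\|\le\sqrt{\dim M}\,\sqrt{c(\mu)}\,\|v\|$ for every unit $X\in\mathfrak m$, so the Lipschitz constant of $m\mapsto\mu(m)v$, hence $\|\mu\|$, is at most $C_3\sqrt{c(\mu)}$ (the $A$-direction only adds a $\mu$-independent term).

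Then I would simply apply Lemma~\ref{1st_fact_about_decay_of_fourier_modes} with an integer exponent $M\ge 1$ to be fixed, obtaining $\|v_\mu\|\le c(\mu)^{-M}(\dim\mu)^2\|v\|_{C^M}$, multiply through by $\|\mu\|^m$, and substitute the three bounds above to get $\|\mu\|^m\|v_\mu\|\le C'\,c(\mu)^{\,m/2+D-M}\|v\|_{C^M}$. Choosing $M:=\lceil m/2\rceil+D$ makes the exponent of $c(\mu)$ non-positive, and since $c(\mu)\ge 1$ this power is $\le 1$; this yields $\|v_\mu\|\le C'\|v\|_{C^{r_0}}\|\mu\|^{-m}$ with $r_0:=\lceil m/2\rceil+D$ and $C:=C'$. (The trivial $M$-type is excluded throughout, which is exactly the convention already adopted after Notation~\ref{define_d_0}.)

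I do not expect a genuine obstacle here; the argument is bookkeeping of exponents once Lemma~\ref{1st_fact_about_decay_of_fourier_modes} is in hand. The only place that needs (entirely standard) care is the uniformity of the comparison $c(\mu)\asymp(1+\|\lambda(\mu)\|)^2$, where I use positive-definiteness of $q$ together with the finiteness of the set of dominant weights of bounded norm; everything else is the Weyl dimension formula and the Casimir computation, which are insensitive to the precise structure of $M=\SO(n-1)$.
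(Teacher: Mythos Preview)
Your argument is correct and is exactly the approach the paper has in mind: the paper gives no proof beyond the phrase ``simple consequence of Lemma~\ref{1st_fact_about_decay_of_fourier_modes} (together with the Weyl dimension formula)'', and your write-up is precisely the routine exponent bookkeeping that unpacks that phrase. The comparisons $\dim\mu\ll c(\mu)^{D/2}$ via Weyl and $\|\mu\|\ll c(\mu)^{1/2}$ via the Casimir identity are the intended ingredients, and your choice of $r_0=\lceil m/2\rceil+D$ is adequate.
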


\subsection{Transfer operators of $M$-type $\mu$} As is usual we now want to reduce the study of mixing properties of $(\hat U^{\tau, \theta}, \nu\times d\theta \times dt, \mathcal{G}_t)$ to the study of spectral bounds for appropriate transfer operators. For $\xi \in \mathbb{C}$ and $(\mu, V)$ an irreducible representation, we define a transfer operator
\begin{eqnarray} {\mathcal{L}}_{\xi, \mu}: C(\hat U, V) &\rightarrow& C(\hat U, V)\\ ({\mathcal{L}}_{\xi, \mu}h)(u) &= &\sum_{\Pp(u') = u}e^{-\xi\tau(u')}
\mu ( \theta(u')) ^{-1}(h(u')).   \label{define_transfer_operator} \end{eqnarray}

Of course we also have a transfer operator for the trivial representation, which we write ${\mathcal{L}}_{\xi}$ acting on $C^1(\hat U, \mathbb{C})$. The most important of these is the operator ${\mathcal{L}}_{\delta}$, which actually acts on $C^1(\hat U, \mathbb{R})$. Ruelle's Perron-Frobenius Theorem \cite{Ru} tells us that ${\mathcal{L}}_{\delta}$ has a positive eigenfunction $h_\delta$ with eigenvalue $1$. It then turns out to be convenient to normalize our transfer operators using this eigenfunction by taking
\begin{equation}  \hat{\mathcal{L}}_{\xi, \mu}(h) = \frac{{\mathcal{L}}_{\xi, \mu}(hh_\delta) }{h_\delta}. \label{defineh_delta}   \end{equation} 
The point of this normalization is that we get to assume that $\hat{\mathcal{L}}_{\delta}(1) = 1$. It also provides the useful by-product that $\nu$ is an $\hat{\mathcal{L}}_{\delta}$ eigenmeasure with eigenvalue $1$.

Unfortunately the transfer operators on $\hat U$ are somewhat difficult to work with directly; we want to apply $C^1$ arguments, but $\hat U$ is a fractal set. This was the reason for introducing the open set $\tilde U$. We may therefore think of transfer operators as acting on 
\begin{eqnarray}\tilde{\mathcal{L}}_{\xi, \mu}: C^1(\tilde U, V) &\rightarrow& C^1(\tilde U, V)\label{here_is_your_transfer_operator} \\ (\tilde{\mathcal{L}}_{\xi, \mu}h)(u) &= &\sum_{\mathrm{Tr}_{ij}}e^{-\xi \tau_{ij}(\Pp_{ij}^{-1}u)}
\mu ( \theta_{ij}(\Pp_{ij}^{-1}u)) ^{-1}(h(\Pp_{ij}^{-1}u))   \end{eqnarray}
on the $\tilde U_j$ piece. We will suppress the $ij$ subscript when it seems unlikely to cause confusion.  
The inclusion map of $C^1(\tilde   U, V)$ into $C(\hat U, V)$ intertwines the operators $\hat{\mathcal{L}}$ and $\tilde{\mathcal{L}}$ and is an isometry for the $L^2(\nu)$ norm. 

The main technical result of our argument is the following spectral bound.

\begin{theorem} \label{transfer_operator_bounds}
There are positive constants $C, \eta$, and a constant $\rho \in (0, 1)$ with the following properties: for any non-trivial irreducible representation $\mu, V_\mu$ of $M$, any $b \in \mathbb{R}$, any $n \in \mathbb{N}$, any $\Re(\xi) > \delta - \eta$, and any $h \in C^1(\tilde U, V_\mu)$ we have 
$$ || \tilde{\mathcal{L}}_{\xi, \mu}^n h||_{L^2(\nu)} \leq C(1 + ||\mu||)^C \rho^n ||h||_{C^1}.$$
\end{theorem}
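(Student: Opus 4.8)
The plan is to run Dolgopyat's argument \cite{Do}, in the fractal form developed by Naud \cite{Na} and Stoyanov \cite{St}, while carrying the $M$-twist through every step. Write $\xi = s + ib_0$; since $\theta(u')$ and $a_{\tau(u')}$ commute, replacing $\mu$ by the $MA$-representation $\mu_b$ and absorbing $b_0$ just shifts $\Im\xi$, so the relevant parameter is a single \emph{frequency} $\omega = \omega(\mu,b) := 1 + \|\mu_b\|$. By Notation \ref{define_d_0} we have $\omega \ge 1 + d_0$ on nontrivial types, $\omega \gtrsim 1 + |b|$, only finitely many $\mu$ make $\omega$ bounded, and $\dim V_\mu$ is at most polynomial in $\omega$ by the Weyl dimension formula. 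Fix a threshold $\omega_0$. In the \emph{bounded regime} $\omega \le \omega_0$ there are finitely many $\mu$ and $b$ lies in a compact set; for each such pair $\tilde{\mathcal L}_{\delta + ib_0,\mu}$ has spectral radius strictly less than $1$ --- this is exactly the statement that the frame flow on $\G\ba G$ is mixing \cite{Br, PP}, equivalently that Zariski density forbids a continuous $MA$-equivariant section --- and quasi-compactness of $\tilde{\mathcal L}_{\xi,\mu}$ on $C^1(\tilde U, V_\mu)$ together with a standard spectral-perturbation argument upgrades this to a uniform bound $\|\tilde{\mathcal L}_{\xi,\mu}^n h\|_{L^2(\nu)} \le C\rho^n\|h\|_{C^1}$ on $\Re\xi > \delta - \eta_0$. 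Everything below concerns the \emph{high-frequency regime} $\omega > \omega_0$.

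Next I would set up a Lasota--Yorke (Doeblin--Fortet) inequality for $\tilde{\mathcal L}_{\xi,\mu}$ with $\Re\xi$ near $\delta$, using the contraction bounds \eqref{definec_0kappakappa_1} for the inverse branches $\Pp^{-1}_{(i_1\ldots i_k)}$ and the $C^1$-boundedness of $\tau_{ij}, \theta_{ij}$, noting that differentiating $\mu\circ\theta_{ij}$ costs a factor $O(\omega)$. The natural object is a \emph{cone} $\mathcal C_\omega$ of pairs $(H,h)$ with $H\colon \tilde U \to \mathbb R_{>0}$ Lipschitz, $h \in C^1(\tilde U, V_\mu)$, $|h| \le H$, $|\nabla H| \le A\omega H$ and $|\nabla h| \le A\omega H$, for a fixed large $A$. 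The distortion estimates show $\tilde{\mathcal L}_{\xi,\mu}$ preserves $\mathcal C_\omega$ once $\Re\xi$ is close enough to $\delta$; and the untwisted normalized operator $\hat{\mathcal L}_s$ being bounded on $L^\infty$ with $\hat{\mathcal L}_\delta \mathbf 1 = \mathbf 1$ lets one feed arbitrary $C^1$ data into the cone: after $n_1 = n_1(h)$ applications one has $(H_0, \tilde{\mathcal L}_{\xi,\mu}^{n_1} h) \in \mathcal C_\omega$ with $\|H_0\|_{L^2(\nu)} \le C\|h\|_{C^1}$, at bounded cost and without affecting the eventual rate (the $n_1$ wasted steps are compensated because $\log(1/\rho)$ can be made smaller than $\log\kappa$).

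The core is the construction of \emph{Dolgopyat operators}. Fix a large block length $N$, \emph{independent} of $\mu$ and $b$. Using the non-local integrability of Section \ref{nli_section} --- the summand phases $b\,\tau^{(N)} + \arg\langle \mu(\theta^{(N)})\cdot,\cdot\rangle$ of $\tilde{\mathcal L}_{\xi,\mu}^N$ oscillate rapidly along unstable directions --- together with the non-concentration property of $\Lambda(\G)$ from Section \ref{NCP_section} --- the region where that oscillation is exploited meets $\hat U$ in $\nu$-measure a fixed proportion of any cylinder --- one builds a finite family of weights $\beta_J\colon\tilde U \to [1-\epsilon_1, 1]$ with $|\nabla\beta_J| = O(\omega)$ such that the positive operators $\mathcal N_J H := \hat{\mathcal L}_s^N(\beta_J H)$ satisfy: for every $(H,h) \in \mathcal C_\omega$ there is $J$ with \[ |\tilde{\mathcal L}_{\xi,\mu}^N h| \le \mathcal N_J H, \qquad (\mathcal N_J H,\ \tilde{\mathcal L}_{\xi,\mu}^N h) \in \mathcal C_\omega, \qquad \|\mathcal N_J H\|_{L^2(\nu)}^2 \le (1-\epsilon_2)\|H\|_{L^2(\nu)}^2. \] The mechanism is the usual dichotomy: on a cylinder of the chosen depth, either the $C^1$-flatness forced by membership in $\mathcal C_\omega$ makes the $\mu$-twisted summands nearly agree both in phase and in direction in $V_\mu$, which non-local integrability then contradicts on a $\nu$-substantial subset, so a fixed proportion of the mass genuinely cancels and can be absorbed by taking $\beta_J < 1$ there; or the summands already disagree and the cancellation is immediate. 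Handling the \emph{vector}-valued $h$ forces one to run this across the $\dim V_\mu$ scalar components, and the non-local integrability constants themselves depend on the size of $\mu$; this is where the polynomial factor $(1+\|\mu\|)^C$ --- which does not affect $\rho$ --- enters. I expect the simultaneous verification of cone-invariance and $L^2$-gain, uniformly in $\omega$, to be the principal obstacle: this is the intricate synthesis of the Parry--Pollicott/Brin and Stoyanov/Dolgopyat pictures alluded to in the introduction, and it is where one must follow Dolgopyat's original bookkeeping most closely.

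Finally, iterate. Given $h \in C^1(\tilde U, V_\mu)$, place $(H_0, \tilde{\mathcal L}_{\xi,\mu}^{n_1}h) \in \mathcal C_\omega$ as above, then apply the Dolgopyat step $k$ times, choosing $J_1,\ldots,J_k$ along the way, to obtain $|\tilde{\mathcal L}_{\xi,\mu}^{n_1 + kN}h| \le \mathcal N_{J_k}\cdots\mathcal N_{J_1} H_0$ pointwise and hence $\|\tilde{\mathcal L}_{\xi,\mu}^{n_1 + kN}h\|_{L^2(\nu)} \le (1-\epsilon_2)^{k/2}\|H_0\|_{L^2(\nu)} \le (1-\epsilon_2)^{k/2} C\|h\|_{C^1}$. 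For general $n$ write $n = n_1 + kN + r$ with $0 \le r < N$, absorb the $r$ leftover steps into the constant (the transfer operator is $L^2(\nu)$-bounded), and absorb the $h$-dependent $n_1$ using that $\log(1/\rho)$ was chosen small. This gives the stated bound in the high-frequency regime with $\rho = (1-\epsilon_2)^{1/(2N)}$; combining with the bounded regime and taking $\eta \le \eta_0$ small enough completes the proof.
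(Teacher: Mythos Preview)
Your overall architecture is right and would go through, but it diverges from the paper in two places worth flagging.

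First, the paper does \emph{not} split into a bounded-frequency regime and a high-frequency regime. Because $\|\mu_b\| \ge d_0 > 0$ for every nontrivial $\mu$ and every $b$ (Notation \ref{define_d_0}), the Dolgopyat machinery is run once, uniformly in $(\mu,b)$: all the constants $N,\epsilon_1,\epsilon_2,\eta,\delta_4,\ldots$ are fixed at the outset independent of $\mu_b$, the only thing that moves is the working scale $\tilde\epsilon = \epsilon_1/\|\mu_b\|$, and every estimate is arranged to be scale-invariant. Your bounded-regime argument via Brin/Parry--Pollicott mixing plus quasi-compactness and perturbation is a valid alternative for that corner, but it is not needed, and invoking it obscures the main point that the Dolgopyat argument itself is already uniform over all nontrivial types.

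Second, your explanation of the polynomial factor is not how the paper gets it, and the mechanism you describe is not what happens. The paper does \emph{not} decompose $h$ into $\dim V_\mu$ scalar components, and the non-local-integrability constants of Lemma \ref{NLI} are \emph{independent} of $\mu$: the sections $v_0,\ldots,v_{j_0}$ and the constant $\epsilon_0$ are fixed once and for all at the level of $AM$, before any representation is chosen. The vector-valued case is instead handled by Lemma \ref{actionmoveseveryone} (every unit vector in $V_\mu$ is moved at speed $\gtrsim\|\mu_b\|$ by some direction in $\mathrm T_e MA$) combined with Lemma \ref{NLI} and the non-concentration Lemma \ref{NCPlemma}: for each ball center $x_r^k$ one picks a single partner point $y_r^k\in\Lambda$ and a single index $j$ so that $BP_j(y_r^k,x_r^k)$ moves the relevant unit vector in $V_\mu$ by a definite amount. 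The dichotomy of Lemma \ref{key_lemma} then compares just two inverse branches at a time, entirely inside $V_\mu$, with no scalar splitting. In the paper's sketch proof of Theorem \ref{transfer_operator_bounds} one simply starts with $H_0\equiv 1$ (no $h$-dependent warm-up $n_1$ is needed, since $|\nabla h|\le \|h\|_{C^1}\le E\|\mu_b\|H_0$ already) and iterates, obtaining $\|\tilde{\mathcal L}^{Nk}_{\xi,\mu}h\|_{L^2(\nu)}\le (1-\epsilon_2)^k$ with $\epsilon_2$ independent of $\mu$; the $(1+\|\mu\|)^C$ prefactor in the statement is slack rather than an output of the Dolgopyat step.
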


\subsection{Relating the flows to transfer operators} Fix $\phi, \psi$ smooth compactly supported functions on $\G \ba G$. We write $\nu$ for the $-\delta \tau$ equilibrium state on $R$. The embedding of $(R^{\tau, \theta}, d\nu \times dt \times dm) $ into $(\G \ba G, m^{\BMS})$ is measure preserving. We will therefore consider $\phi, \psi$ as functions on $R^{\tau, \theta}$ and aim to prove exponential mixing there. The next step is to convert $\phi, \psi$ into functions on $\hat U$ (or $\tilde U$). We do this essentially by integrating out the $S$ component. for a positive number $r$ and $(u, m, t) \in \tilde U_i\times M\times [0, \tau(u))$ choose
$$ \phi_r(u, m, t) = \int_{ S_i} \phi([u, s]ma_{t+r}) d\nu_u(s),$$
where $d\nu_u$ is the conditional measure on $ S_i$ for $d\nu$, which is given by the Patterson-Sullivan density. Note that the dependence of $\nu_u$ on $u$ is $C^1$; this follows because the measure of maximal entropy on $\G \ba G$ is a quasi-product measure, and is essential to provide required regularity of $\phi_r$. 
The precise relationship to the BMS measure is given by the following lemma (modeled on \cite{AGY}).
\begin{lemma} \label{semi_flow_mixing}
We have the estimate
\begin{multline} \left|  \int_{\G\ba G}\phi(ga_{2t} )\psi(g) dm^{\BMS}(g)  -\frac{1}{\nu(\tau)} \int_{U^{\tau, \theta}}  \phi_t(u, m, r + t) \psi_0(u, m, r) dm dr d\nu \right| \\ \leq ||\phi||_{C^1} ||\psi||_{\infty} e^{-t}.   \end{multline}
\end{lemma}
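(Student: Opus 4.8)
The plan, following the template of \cite{AGY}, is to transport the correlation integral on $\G\ba G$ onto the suspension model and then to replace the pointwise product $\phi\cdot\psi$ by the product of the two stable--averaged functions $\phi_t$ and $\psi_0$, the error being controlled by exponential contraction along strong stable leaves.

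First I would use the measure preserving embedding of the suspension over $\hat R$ into $(\G\ba G,m^{\BMS})$ recalled just before the lemma. Writing a point of the suspension as $(x,\sigma,m)\mapsto F(x)a_\sigma m$ and using that $A$ and $M$ commute (so that $(F(x)a_\sigma m)a_{2t}=F(x)a_{\sigma+2t}m$), the left--hand side becomes
$$ \frac{1}{\nu(\tau)}\int_{\hat R}\int_{0}^{\tau(x)}\int_{M}\phi\bigl(F(x)a_{\sigma+2t}m\bigr)\,\psi\bigl(F(x)a_{\sigma}m\bigr)\,dm\,d\sigma\,d\nu(x). $$
I would then disintegrate the equilibrium state as $d\nu(x)=d\nu_u(s)\,d\nu(u)$, where for $x=[u,s]\in R_i$ the conditional $\nu_u$ on $S_i$ is the Patterson--Sullivan measure used in the definitions of $\phi_r$ and $\psi_r$; since $\tau$ is constant along strong stable leaves the inner $\sigma$--range is $[0,\tau(u))$ independently of $s$. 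This presents the left--hand side as an integral over $(u,m,\sigma)$ of the inner quantity $\int_{S_i}\phi(F([u,s])a_{\sigma+2t}m)\,\psi(F([u,s])a_{\sigma}m)\,d\nu_u(s)$, which is to be compared with the inner quantity $\phi_t(u,m,\sigma+t)\,\psi_0(u,m,\sigma)$ of the right--hand side.

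The heart of the matter is the contraction estimate. For $s,s_0\in S_i$ sharing the first coordinate $u$, the frames $F([u,s])$ and $F([u,s_0])$ lie on a common strong stable leaf (this is precisely the defining property of the section $F$) at distance $O(\e)$ in $\G\ba G$, and the frame flow contracts strong stable leaves at rate $e^{-c}$; hence $\dist(F([u,s])a_c m, F([u,s_0])a_c m)\le C\e\, e^{-c}$ for $c\ge 0$, uniformly over $m\in M$ (right translation by the compact set $M$ being uniformly bi-Lipschitz for the left invariant metric). Taking $c=\sigma+2t\ge 2t$, the map $s\mapsto\phi(F([u,s])a_{\sigma+2t}m)$ has oscillation at most $C\e e^{-2t}\|\phi\|_{C^1}$ over $S_i$, hence differs from its $\nu_u$--average $\phi_t(u,m,\sigma+t)$ by at most that much. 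Substituting this, pulling the now (almost) $s$--independent factor out of the $S_i$--integral, and recognising $\int_{S_i}\psi(F([u,s])a_\sigma m)\,d\nu_u(s)=\psi_0(u,m,\sigma)$, the two inner quantities agree up to $O(\e e^{-2t}\|\phi\|_{C^1}\|\psi\|_\infty)$. Integrating against $\tfrac1{\nu(\tau)}\,dm\,d\sigma\,d\nu(u)$ and using $\tfrac1{\nu(\tau)}\int_{\hat U}\tau\,d\nu=1$ and $\vol(M)=1$ then yields the claimed estimate, with $e^{-2t}\le e^{-t}$ to spare (the implied constant being absorbed for $t$ large).

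I expect the only real difficulty to be bookkeeping rather than substance: one has to check that the disintegration of $\nu$ producing the conditionals $\nu_u$ is literally the one appearing in the definitions of $\phi_r,\psi_r$, and that the constancy of $\tau$ (and $\theta$) along stable leaves makes the reparametrisation of $\hat R^{\tau,\theta}$ clean --- in particular that nothing goes wrong when the $\phi$--factor is flowed past several return times before the comparison. The genuinely dynamical input is just the elementary contraction estimate above, which is also the source of the exponential gain.
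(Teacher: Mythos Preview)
Your argument is correct and is exactly the intended one: the paper does not actually supply a proof of this lemma, merely pointing to \cite{AGY} (Avila--Gou\"ezel--Yoccoz) as the model, and your write-up is precisely the AGY contraction argument specialized to the present setting. The key points---that the section $F$ places $F([u,s])$ and $F([u,s'])$ on a common strong stable leaf in $\G\ba G$, that $\tau$ is constant along stable leaves so the $\sigma$--range is $[0,\tau(u))$ after disintegration, and that the $\phi$--factor has oscillation $O(\e\,e^{-(\sigma+2t)}\|\phi\|_{C^1})$ over $S_i$---are all correctly identified and justified from the paper's setup in \S\ref{A_model_for_frame_flow}.

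Two minor remarks. First, your worry about ``flowing past several return times'' is unnecessary here: you never pass through the suspension identifications, since you compare pointwise values $\phi(F([u,s])a_{\sigma+2t}m)$ directly in $\G\ba G$, and the formula for $\phi_t(u,m,\sigma+t)$ is likewise just an integral over $S_i$ of such values. Second, the implied constant from the bi-Lipschitz right $M$--action and the diameter $\e$ of the Markov pieces is harmless because the Markov section may be taken of arbitrarily small size $\e$, so $C\e\,e^{-2t}\le e^{-t}$ for all $t\ge 0$ once $\e$ is chosen small; there is no need to restrict to large $t$.
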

Thus Theorem \ref{main1} will follow as soon as we establish exponential decay of 
\begin{equation} \frac{1}{\nu(\tau)} \int_{U^{\tau, \theta}}  \phi_t(u, m, r + t) \psi_0(u, m, r) dm dr d\nu; \label{reduced_to_U} \end{equation}
this is a question which takes place entirely on $\hat U$. Note at this stage that $\psi_0$ is actually the restriction of a $C^1$ function on $\tilde U$. Note also that the $M$ integral is clearly zero unless $\phi_t, \psi_0$ have the same $M$ type, $\mu$, say, so it is natural to break the integral up into a sum of $M$ types.

\begin{proposition} Theorem \ref{transfer_operator_bounds} implies exponential decay of \eqref{reduced_to_U} and hence Theorem \ref{main1}. \label{transfer_operators_are_engough}
\end{proposition}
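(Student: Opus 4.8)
The plan is to expand the integral in \eqref{reduced_to_U} along the semiflow model, recognize the $r+t$ shift in the first coordinate as an iterated application of the transfer operators $\tilde{\mathcal L}_{\xi,\mu}$, decompose into $M$-types, apply Theorem \ref{transfer_operator_bounds} to each type with $\xi = \delta$ (so we are working with the normalized operators $\hat{\mathcal L}_{\delta,\mu}$ and the $\hat{\mathcal L}_\delta$-eigenmeasure $\nu$), and finally re-sum over $\mu$ using the Fourier decay of Corollary \ref{2nd_fact_about_decay_of_fourier_modes} to control the $(1+\|\mu\|)^C$ factors that Theorem \ref{transfer_operator_bounds} produces.

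In more detail: first I would unfold the equivalence relation defining $\hat U^{\tau,\theta}$ to rewrite $\int_{U^{\tau,\theta}} \phi_t(u,m,r+t)\psi_0(u,m,r)\,dm\,dr\,d\nu$ as an integral over $u\in\hat U$, $m\in M$, $r\in[0,\tau(u))$, where the large shift $r\mapsto r+t$ forces us to pass through roughly $t/\|\tau\|_\infty$ returns of $\Pp$; this is the standard manoeuvre (as in \cite{AGY}, \cite{St}) of writing the correlation as $\langle \mathcal L^n(\text{stuff}), \text{stuff}\rangle_{L^2(\nu)}$ plus a boundary term coming from the non-integer part of the number of returns. Keeping track of the $\theta$-holonomy factors as one iterates $\Pp^{-1}$ produces exactly the twist $\mu(\theta^{(n)})^{-1}$ appearing in \eqref{define_transfer_operator}, so the $\mu$-isotypic piece of the correlation is expressed through $\hat{\mathcal L}_{\delta,\mu}^n$ (equivalently $\tilde{\mathcal L}_{\delta,\mu}^n$ under the isometric inclusion $C^1(\tilde U,V)\hookrightarrow C(\hat U,V)$). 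One also needs the $C^1$-regularity of $u\mapsto\nu_u$ and of $\phi_t,\psi_0$ (noted after Lemma \ref{semi_flow_mixing}) so that the functions fed into Theorem \ref{transfer_operator_bounds} genuinely lie in $C^1(\tilde U,V_\mu)$ with controlled norm. For the trivial $M$-type the contribution is $m^{\BMS}(\phi)m^{\BMS}(\psi)$ plus an error handled by Stoyanov's/Dolgopyat's bound for the untwisted operator, exactly as recalled in Notation \ref{define_d_0}; the nontrivial types all give pure error terms.

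Applying Theorem \ref{transfer_operator_bounds} to the $\mu$-type piece bounds it by $C(1+\|\mu\|)^C\rho^n\|\phi_{t,\mu}\|_{C^1}\|\psi_{0,\mu}\|_{C^1}$, with $n\asymp t$, hence by $C(1+\|\mu\|)^C e^{-\eta t}\|\phi\|_{C^k}\|\psi\|_{C^k}$ for a uniform $\eta>0$. Summing over all nontrivial $\mu$, the polynomial factor $(1+\|\mu\|)^C$ is absorbed by choosing $m$ large enough in Corollary \ref{2nd_fact_about_decay_of_fourier_modes} (equivalently Lemma \ref{1st_fact_about_decay_of_fourier_modes}), since $\|\phi_{t,\mu}\|$ decays faster than any power of $\|\mu\|$ once $\phi\in C^k$ with $k=k(C,n)$ large; the series $\sum_\mu (1+\|\mu\|)^C (\dim\mu)^2 \|\mu\|^{-m}$ converges by the Weyl dimension formula for $M=\SO(n-1)$. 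This yields the claimed $O(\|\phi\|_{C^k}\|\psi\|_{C^k} e^{-\eta t})$ bound for \eqref{reduced_to_U}, and Lemma \ref{semi_flow_mixing} then upgrades it to Theorem \ref{main1} (after the usual replacement $t\leftrightarrow 2t$ and absorbing the $e^{-t}$ boundary term).

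The main obstacle is not any single estimate but the bookkeeping of making the $\mu$-sum uniform: one must verify that the power $C$ appearing in Theorem \ref{transfer_operator_bounds} and the Sobolev order needed in Corollary \ref{2nd_fact_about_decay_of_fourier_modes} can be reconciled with a \emph{single} choice of $k$ in Theorem \ref{main1}, and that the regularity constants for $\phi_t$ (which involve the $C^1$-dependence of the Patterson--Sullivan conditionals $\nu_u$, and a $t$-independent bound on $\|\phi_t\|_{C^1}$ despite the flow time $t$) do not secretly grow with $t$. Granting the quasi-product structure of $m^{\BMS}$ this regularity is uniform in $t$, so the argument closes; the remaining steps are the routine unfolding and resummation sketched above.
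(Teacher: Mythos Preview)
The high-level architecture---decompose into $M$-types, handle the trivial type via Stoyanov, apply Theorem~\ref{transfer_operator_bounds} to each nontrivial type, and resum over $\mu$ using Corollary~\ref{2nd_fact_about_decay_of_fourier_modes} with $m$ large enough to beat the factor $(1+\|\mu\|)^C$---matches the paper. The gap is in the middle step, where you pass from the semiflow correlation to transfer operators.

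You propose to write the correlation as $\langle\mathcal L^n(\text{stuff}),\text{stuff}\rangle_{L^2(\nu)}$ for a single $n\asymp t$ plus a ``boundary term'', and to invoke Theorem~\ref{transfer_operator_bounds} only at the real parameter $\xi=\delta$. But the number of $\Pp$-returns needed to absorb a time-$t$ shift is not constant over $\hat U$: it ranges from about $t/\sup\tau$ to about $t/\inf\tau$, an interval of length $\asymp t$. The correlation is therefore genuinely a sum $\sum_k\langle\hat{\mathcal L}_{\delta,\mu}^k F_{k,t},G\rangle$ in which each $F_{k,t}$ carries a hard cutoff to the set $\{(u,r):\tau^{(k)}(u)\le r+t<\tau^{(k+1)}(u)\}$; these cutoffs destroy the $C^1$ regularity that Theorem~\ref{transfer_operator_bounds} demands of its input, and you do not say how to repair this. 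Your attribution of this manoeuvre to \cite{AGY} is also off: AGY (and the present paper, following them) do \emph{not} argue in the time domain. They take the one-sided Laplace transform $\mathcal F\rho(\xi)=\int_0^\infty\rho(t)e^{-\xi t}\,dt$, prove the exact identity $\mathcal F\rho(\xi)=\sum_{k\ge1}\int_{\hat U}\langle\hat{\mathcal L}^k_{\delta+\xi,\mu}\hat f_{-\xi},\hat g_\xi\rangle\,d\nu$ (Lemma~\ref{flows_and_transfer_operators}) with \emph{smooth} inputs $\hat f_{-\xi},\hat g_\xi$, use Theorem~\ref{transfer_operator_bounds} across the whole strip $\Re(\xi)>\delta-\eta$ to extend $\mathcal F\rho$ holomorphically past the imaginary axis, extract an extra factor $(1+|\Im\xi|)^{-1}$ from each of $\hat f_{-\xi},\hat g_\xi$ via a bounded-variation estimate so that $\mathcal F\rho$ is integrable along $\Re(\xi)=-\eta/2$, and finally apply the Laplace inversion formula along that line to recover $\rho(t)=O(e^{-\eta t/2})$. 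In particular the full range $\Re(\xi)>\delta-\eta$ in Theorem~\ref{transfer_operator_bounds} is exactly what is exploited here; restricting to $\xi=\delta$ discards the mechanism by which the paper obtains decay.
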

A readable model argument is presented in the paper of Avila, Gouezel, Yoccoz. We recount their argument here for the reader's convenience.
By decomposing $\phi, \psi$ into $M$ types we may assume that each is of the same type $(\mu, V_\mu)$. It follows then that $\phi_t, \psi_0$ are of type $\mu$. For ease of notation we will write $ f = \psi_0$ and $g = \phi_t$, which are both elements of $L^{\infty}(\hat U^{\tau, \theta})$ in the first instance. We have functions $\hat{f}, \hat{g} \in C^1(\hat U^\tau, V_\mu)\subset C^1(\hat U^\tau, L^2(M))$ defined by the property
$$f(x, t, \theta) = \hat{f}(x, t)(\theta)$$
$$g(x, t, \theta) = \hat{g}(x, t)(\theta)$$
for $\theta \in M$. We write 
\begin{eqnarray*} \check{\rho}_{f, g}(t) &:=&  \int_{{\hat U}^{\tau, \theta}} f(u) \overline{g(\mathcal{G}_t(u)) }\\
 &=& \int_{\hat U} \int_0^{\tau(u)}\int_M f(u, s, \theta)\overline{  g(u, s + t, \theta)} d\theta ds d\nu\\
 &=& \rho(t) + \rho_1(t).\end{eqnarray*}
 where 
 $$  \rho(t)   :=  \int_{\hat U} \int_{\max(0, \tau(u) - t)}^{\tau(u)}\int_M f(u, s, \theta)\overline{  g(u, s + t, \theta)} d\theta ds d\nu $$
 and 
 $$  \rho_1(t)   :=  \int_{\hat U} \int_{0}^{\max(0, \tau(u) - t)}\int_M f(u, s, \theta)\overline{  g(u, s + t, \theta)} d\theta ds d\nu $$
 is compactly supported on $t < ||\tau||_\infty$. It turns out (think about the fact we are taking a one sided Laplace transform) to be both sufficient and easier to study $\rho(t)$.
 At this stage we recall the Laplace transform: for $\Re(\xi) > 1$, say, we have 
 \begin{eqnarray*} \mathcal{F}\rho(\xi) := \int_0^\infty\rho(t) e^{-\xi t} dt \end{eqnarray*}
 The key relationship between mixing and transfer operators is described in terms of the Laplace transform as follows: for a function $\hat f$ on $\hat U^\tau$ and a complex number $\xi$ we define a function $f_\xi$ on $\hat U$ by
 $$ \hat f_\xi(u) = \int_0^{\tau(u)}e^{-\xi t} \hat f(u, t) dt $$
 and similarly for $\hat g_\xi$. 
 \begin{lemma} \label{flows_and_transfer_operators} We have the relation  
\begin{equation}  \mathcal{F}\rho(\xi) = \sum_1^{\infty} \int_U \left\langle \hat{\mathcal{L}}^k_{\delta + \xi, \mu} \hat{ f}_{-\xi}(v),\hat{ g}_{\xi}(u)  \right\rangle_M d\nu. \label{as_an_infinite_sum} \end{equation}
 
\end{lemma}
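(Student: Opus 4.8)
The plan is to prove Lemma~\ref{flows_and_transfer_operators} by directly unfolding the one--sided Laplace transform of $\rho$, following the model computation of Avila--Gou\"ezel--Yoccoz. Structurally this is a bookkeeping argument built around the equivalence relation defining $\hat U^{\tau,\theta}$; the one genuinely dynamical ingredient is that $\nu$ is an $\hat{\mathcal L}_\delta$--eigenmeasure of eigenvalue $1$ (the by-product of the normalization \eqref{defineh_delta}), which is what manufactures the missing factor $e^{-\delta\tau^{(k)}}$ and the ratio $h_\delta(\cdot)/h_\delta(\cdot)$ turning $e^{-\xi\tau^{(k)}}$ into the weight $e^{-(\delta+\xi)\tau^{(k)}}$ of $\hat{\mathcal L}^k_{\delta+\xi,\mu}$.

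Concretely, I would start from $\mathcal F\rho(\xi)=\int_0^\infty e^{-\xi t}\rho(t)\,dt$ with $\rho(t)$ as written, and note that on the domain of integration one has $s+t\ge\tau(u)$, so the point $(u,s+t,\theta)\in\hat U^{\tau,\theta}$ must be reduced modulo $\sim$: iterating the relation $(u,t'+\tau(u),\theta)\sim(\Pp(u),t',\theta(u)^{-1}\theta)$ gives, for the unique $k\ge 1$ with $\tau^{(k)}(u)\le s+t<\tau^{(k+1)}(u)$, the identity $g(u,s+t,\theta)=\hat g(\Pp^k(u),\,s+t-\tau^{(k)}(u))(\theta^{(k)}(u)^{-1}\theta)$. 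Partitioning the $(s,t)$--region by this value of $k$ expresses $\mathcal F\rho(\xi)=\sum_{k\ge 1} I_k$. In the $k$-th term I perform three changes of variable: (i) replace $t$ by $r=s+t-\tau^{(k)}(u)\in[0,\tau(\Pp^k(u)))$, which splits $e^{-\xi t}=e^{\xi s}\,e^{-\xi r}\,e^{-\xi\tau^{(k)}(u)}$; (ii) translate the Haar variable $\theta\mapsto\theta^{(k)}(u)\theta$, so that by the defining transformation property of functions of $M$-type $\mu$ the resulting translate of $\hat f(u,s)$ equals $\mu(\theta^{(k)}(u))^{-1}\hat f(u,s)$ — exactly the factor in \eqref{define_transfer_operator} — and the $\theta$-integral becomes $\langle\cdot,\cdot\rangle_M$; (iii) recognize $\int_0^{\tau(u)}e^{\xi s}\hat f(u,s)\,ds=\hat f_{-\xi}(u)$ and $\int_0^{\tau(\Pp^k u)}e^{-\xi r}\hat g(\Pp^k u,r)\,dr=\hat g_\xi(\Pp^k u)$. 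This leaves $I_k=\int_{\hat U} e^{-\xi\tau^{(k)}(u)}\big\langle \mu(\theta^{(k)}(u))^{-1}\hat f_{-\xi}(u),\,\hat g_\xi(\Pp^k u)\big\rangle_M\,d\nu(u)$.

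Finally I apply the eigenmeasure relation $\int_{\hat U}(\hat{\mathcal L}_\delta^k F)\,d\nu=\int_{\hat U}F\,d\nu$, i.e.
$$\int_{\hat U}F(v)\,d\nu(v)=\int_{\hat U}\sum_{\Pp^k(v)=u}e^{-\delta\tau^{(k)}(v)}\frac{h_\delta(v)}{h_\delta(u)}F(v)\,d\nu(u),$$
to $F(v)=e^{-\xi\tau^{(k)}(v)}\langle\mu(\theta^{(k)}(v))^{-1}\hat f_{-\xi}(v),\hat g_\xi(\Pp^k v)\rangle_M$; since $\Pp^k(v)=u$ the $\hat g_\xi(u)$ factor pulls out of the inner sum, and the remaining sum is precisely $\hat{\mathcal L}^k_{\delta+\xi,\mu}\hat f_{-\xi}(u)$ in its normalized form. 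Summing over $k\ge1$ yields \eqref{as_an_infinite_sum}. The points requiring care, rather than new ideas, are: justifying the partition and the interchange of $\sum_k$ with $\int_0^\infty e^{-\xi t}dt$ (valid for $\Re(\xi)$ large by a crude bound $|\rho$-integrand$|\lesssim \|f\|_\infty\|g\|_\infty$ together with exponential shrinking of the cylinder contributions, exactly the regime $\Re(\xi)>1$ in which the lemma is stated); keeping the inverses in (ii) and in $\theta^{(k)}$ consistent with the transfer-operator convention; and the minor conjugation bookkeeping in identifying the $r$-integral with $\hat g_\xi$, which is transparent for real $\xi$ and then extends by analyticity of both sides. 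I expect step (ii)--(iii) combined with the eigenmeasure substitution — getting all the weights, twists, and the $h_\delta$ ratio to line up simultaneously — to be the only place one can go wrong, but it is mechanical once the reduction $g(u,s+t,\theta)=\hat g(\Pp^k u,\cdot)(\theta^{(k)}(u)^{-1}\theta)$ is in hand.
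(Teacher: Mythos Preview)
Your proposal is correct and follows essentially the same route as the paper's proof: unfold the Laplace transform, partition the time domain by the return index $k$ via $\tau^{(k)}(u)\le s+t<\tau^{(k+1)}(u)$, use the suspension relation to rewrite $g$ at $(\Pp^k u,\,s+t-\tau^{(k)}(u),\,\theta^{(k)}(u)^{-1}\theta)$, integrate out $s,r,\theta$ to produce $\hat f_{-\xi},\hat g_\xi$ and the $L^2(M)$ pairing, and finally invoke the $\hat{\mathcal L}_\delta$-eigenmeasure property of $\nu$ to convert the $\Pp^k$-pushforward into the normalized transfer operator $\hat{\mathcal L}^k_{\delta+\xi,\mu}$. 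The only cosmetic difference is that you place the $M$-twist on $\hat f$ via a Haar translation whereas the paper leaves it on $\hat g$ and then moves it across the inner product by unitarity; these are equivalent.
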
  
\begin{proof} We simply calculate: 
\begin{eqnarray*}\mathcal{F}\rho(\xi)  &=& \int_0^\infty \int_{\hat U} \int_{\max(0, \tau(u) - t)}^{\tau(u)}\int_M f(u, s, \theta)\overline{  g(u, s + t, \theta)} e^{-\xi t} d\theta ds d\nu dt\\
 &=&  \int_{\hat U} \int_{0}^{\tau(u)} \int_{\tau(u)}^\infty \int_M f(u, s, \theta)\overline{  g(u, t, \theta)} e^{-\xi (t - s) } d\theta dt ds d\nu\\
 &=&  \int_{\hat U} \int_{0}^{\tau(u)} \sum_1^{\infty} \int_{\tau^{(k)}(u)}^{\tau_{k+1}(u)} \int_M f(u, s, \theta)\overline{  g(u, t, \theta)} e^{-\xi (t - s) } d\theta dt ds d\nu\\ 
  &=&  \int_{\hat U} \int_{0}^{\tau(u)} \sum_1^{\infty} \int_{0}^{\tau(\sigma^{k}u)} \int_M f(u, s, \theta)\overline{  g(u, t + \tau^{(k)}(u), \theta)} e^{-\xi (t - s + \tau^{(k)}(u) ) } d\theta dt ds d\nu\\ 
 &=&  \int_{\hat U} \int_{0}^{\tau(u)} \sum_1^{\infty} \int_{0}^{\tau(\sigma^{k}u)} \int_M f(u, s, \theta)\overline{  g(\sigma^k(u), t, \theta^{-1}_k(u) \cdot \theta)} e^{-\xi (t - s + \tau^{(k)}(u) ) } d\theta dt ds d\nu\\ 
  &=&  \int_{\hat U} \int_{0}^{\tau(u)} \sum_1^{\infty} \int_{0}^{\tau(\sigma^{k}u)} \int_M f(u, s, \theta)\overline{ \theta^{(k)}(u)  g(\sigma^k(u), t,  \theta)} e^{-\xi (t - s + \tau^{(k)}(u) ) } d\theta dt ds d\nu\\ 
  &=&  \int_{\hat U} \int_{0}^{\tau(u)} \sum_1^{\infty} \int_{0}^{\tau(\sigma^{k}u)} \left\langle\hat{ f}(u, s), \theta^{(k)}(u) \hat{ g}(\sigma^k(u), t)  \right\rangle_M  e^{-\xi (t - s + \tau^{(k)}(u))}dt ds d\nu\\   
&=&   \sum_1^{\infty} \int_{\hat U} \left\langle\hat{ f}_{-\xi}(u), \theta^{(k)}(u) \hat{ g}_{\xi}(\sigma^k(u))  \right\rangle_M  e^{-\xi  \tau^{(k)}(u)}d\nu\\   
  \end{eqnarray*}
  with $\hat f_\xi$ and $\hat g_\xi$ defined as above.  Now we recall that $\nu$ is an eigenmeasure for the normalized transfer operator $\hat{\mathcal{L}}_{\delta}$ acting on $C(\hat U)$. Applying this $k$ times to the right hand side above we have 
 \begin{eqnarray*}  \mathcal{F}\rho(\xi) &=& \sum_1^{\infty} \int_{\hat{U}} \frac{1}{h_\delta(u)} \sum_{\sigma^kv = u}h_\delta(v)  \left\langle\hat{ f}_{-\xi}(v), \theta^{(k)}(v) \hat{ g}_{\xi}(u)  \right\rangle_M  e^{-(\delta + \xi)  \tau^{(k)}(v)}d\nu  \end{eqnarray*}
 for $h_\delta$ the lead eigenfunction of $\mathcal{L}_{\delta}$. We quickly recognize this as 
  \begin{eqnarray}  \mathcal{F}\rho(\xi) &=& \sum_1^{\infty} \int_{\hat{U}} \left\langle \hat{\mathcal{L}}^k_{\delta + \xi, \mu} \hat{ f}_{-\xi}(v),\hat{ g}_{\xi}(u)  \right\rangle_M d\nu \end{eqnarray}
  as expected.
  \end{proof}
  
  \begin{proof}[Proof of Proposition \ref{transfer_operators_are_engough}] Consider $\phi$ and $\psi$ smooth functions with compact support on $\G \ba G$. We note that all the results of this paper are well understood for $M$-invariant functions \cite{St}, so we may assume without loss of generality that $\int_M\phi(gm)dm = \int_M\psi(gm)dm = 0$ for all $g \in \G \ba G$; in other words we assume that the decomposition of $\phi, \psi$ into $M$ types has zero in the trivial part (see subsection \ref{talk_about_M_inv_functions}). Write $C$ for the constant from Theorem \ref{transfer_operator_bounds}.  As before we write $f = \psi_0$ and $g = \phi_t$. We can then decompose $f = \sum_\mu f_\mu$ and $g = \sum g_\mu$. pick $m$ such that $\sum_\mu (1 + ||\mu||)^{C - m}$ is summable. As in Corollary \ref{2nd_fact_about_decay_of_fourier_modes} we may choose $r_0$ such that the $\mu$-component $\hat g_\mu : (\hat U^\tau, V_\mu)$ satisfies 
  $$ |\hat g_\mu| \ll \frac{ ||\phi||_{C^{r_0}}}{||\mu||^m}$$
  pointwise. In fact more is true; for each $u \in \hat U$ we have the function
  $$\hat g_\mu(u, \cdot) : [0, \tau(u)) \rightarrow V_\mu.$$ By an elementary argument (see \cite[Defintion 7.2 and the proof of Lemma 8.3]{AGY}) the total variation of this function is bounded above by 
  $$|\hat g_\mu(u, \cdot)|_{\mbox{tot. var.}} \ll\frac{||\phi||_C^{r_0+ 1}}{||\mu||^m}.$$  
  As is well known, a bound on the total variation leads to decay estimates on the associated Laplace transform; in our case we have 
    \begin{equation}  |\hat g_{\mu, \xi}| \ll \frac{ ||\phi||_{C^{r_0 + 1}}}{||\mu||^m(1 + |\Im(\xi)|)}\end{equation} 
    as in \cite[Lemmas 7.19 and 7.20]{AGY}; we've denoted the imaginary part of $\xi$ by $\Im(\xi)$. 

Similarly we break $\hat f_\xi$ into $\mu$-pieces$\hat f_{\xi, \mu}$. Now we want to use Theorem  \ref{transfer_operator_bounds}  to bound 
  $$ || \hat{\mathcal{L}}^k_{\delta + \xi, \mu} \hat{ f}_{\mu, -\xi}||_{L^2(\nu)}.$$
  In principle we're worried that $\hat f_{\mu, -\xi}$ is defined a priori on $\hat U$, whereas Theorem \ref{transfer_operator_bounds} really uses the smooth structure. However it is straightforward to see that $F_{\mu, -\xi} = \hat{\mathcal{L}}_{\delta + \xi, \mu} \hat{ f}_{\mu, -\xi}$ is the restriction of a function $\tilde F_{\xi, \mu} \in C^1(\tilde U, V_\mu)$ satisfying 
   \begin{equation} ||\tilde{F}_{\mu, \xi}||_{C^1}\ll \frac{||\psi||_{C^2}}{1 + |\Im(\xi)|}.\end{equation}
  Theorem \ref{transfer_operator_bounds} then gives 
    $$ || \hat{\mathcal{L}}^n_{ \delta + \xi, \mu} \hat{ f}_{-\xi}||_{L^2(\nu)} \ll \frac{C (1 + ||\mu||)^C\rho^n||\psi||_{C^2}}{1 + |\Im(\xi)|}.$$
  From Lemma \ref{flows_and_transfer_operators} we see that $\mathcal{F}\rho(\xi)$ has holomorphic extension to the half plane with real part $\Re(\xi) >  - \eta$ and satisfies the bound 
  $$\mathcal{F}\rho_{f_\mu, g_\mu} (\xi)  \ll \frac{C(1 + ||\mu||)^C||\phi||_{C^{r_0+ 1}} ||\psi||_{C^2}}{||\mu||^m(1 + |\Im(\xi)|)^2}$$
  along the line $\Im(\xi) =  - \eta / 2$ whenever $\mu$ is non-trivial. We now want to apply some form of Paley-Wiener theorem to conclude that $\rho_{f_\mu, h_\mu}$ has good decay. In our case we can simply apply the Laplace inversion formula to see that 
  $$ \rho_{f_\mu, h_\mu}(t) = \frac{1}{2\pi i} \int_{-\infty}^\infty e^{(is - \eta/2)t}\mathcal{F}\rho_{f_\mu, g_\mu} (is - \eta/2)  ds.  $$
  Since the integral is absolutely convergent we may pull out the $e^{-t/2}$, which provides the required decay. We therefore have
 $$\rho_{f_\mu, g_\mu} (t) \ll ||\phi||_{C^{r_0+ 1}} ||\psi||_{C^2} (1 + ||\mu||)^{C-m} e^{-\eta t/2}.$$
 This provides our result in the case that $\phi, \psi$ are pure functions of type $\mu$. Summing over types we have our conclusion. 
\end{proof}

\subsection{A remark on $M$-invariant functions} As previously indicated, the results of this paper are already well understood for $M$-invariant functions. The argument uses the same framework as our current discussion, and was carried through by Stoyanov \cite{St}. For the readers convenience, however, we will comment briefly on the changes necessary to this paper if one wanted to study $M$-invariant functions as well. 

Consider now a pair of $M$-invariant functions $\phi, \psi$ on $\G \ba G$. We retain the notation above. Theorem \ref{transfer_operator_bounds} (the main estimate on spectral bounds for transfer operators) is stated for the case where $\mu$ is non-trivial. In fact the same result also holds when $\mu$ is trivial, so long as we assume $|\Im(\xi)| > 1$. This, together with the arguments of the current Section, tells us that that the Fourier transform $\mathcal{F}\rho_{\phi, \psi}$ is holomorphic on $|\Im(\xi)| > 1, \Re(\xi) > - \eta$. We then appeal to the complex Ruelle-Perron-Frobenius Theorem, which ends up saying that $\mathcal{F}\rho_{\phi, \psi}$ is meromorphic on $\Re(\xi) >  - \eta', |\Im(\xi)| < 1$ with just one simple pole at $\xi = 0$ ($\eta'$ here is just some small positive constant). Putting those two together we can then apply the Laplace inversion formula as above; the main term of the correlation function comes from the pole at $\xi = 0$, while the exponential error term comes from the otherwise pole free region $\Re(\xi) > \delta - \min(\eta, \eta')$.

\section{Non-Concentration of the limit set and doubling properties} \label{NCP_section}
\subsection{Non-concentration} The previous two sections describe how mixing properties for frame flow follow from spectral bounds for transfer operators. Now we want to prove such spectral bounds. Dolgopyat provides a framework in which to do this, but we must first set some preliminaries in place. 

The mantra of Dolgopyat's argument is that oscillation leads to cancellation, while cancellation leads to spectral bounds. We concentrate on the first part for now. The idea is that when we add up a collection of rapidly oscillating functions then we should expect some cancellation in the sum. Of course this is not quite true (think of the rapidly oscillating functions $ \cos nx$, which have no cancellation at $x = 0$); rather we should expect only to get cancellation at {\it most} points in the domain. The outcomes of this section will be used to show that, in our case, the large set where we get cancellation does actually meet the limit set, where the dynamics of the flow happen.

We identify the boundary $\partial \mathbb{H}^n$ of hyperbolic space with $\mathbb{R}^{n-1} \cup \lbrace \infty \rbrace$. Without loss of generality (replacing $\G$ by a conjugate if necessary) we assume that the limit set $\Lambda$ of $\G$ does not contain $\infty$, so we think of $\Lambda$ as a compact subset of $\mathbb{R}^{n-1}$. We further choose the one parameter subgroup $A = \lbrace a_t : t \in \mathbb{R} \rbrace$ to be the one parameter subgroup acting as expansion by $e^t$ on the upper half space $\mathbb{H}^n$. In this setting the stable horospherical group, isomorphic to $\mathbb{R}^{n-1}$, simply acts by translation on the boundary less infinity. The unstable horospherical group $N^+$, of course, is also isomorphic to $\mathbb{R}^{n-1}$, but acts in a somewhat more complicated way.

\begin{lemma}[Non-Concentration Property] \label{NCPlemma}
There exists $\delta > 0$ with the following property. For any $\epsilon \in (0, 1)$, any unit vector $w$ in $\mathbb{R}^{n-1}$ and any $x \in \Lambda$ there exists $y \in \Lambda$ such that 
\begin{itemize}
\item $y \in B_\epsilon(x)$, and
\item $|\langle y - x, w \rangle | \geq \epsilon \delta$.  
\end{itemize}
The same property holds if we require $x, y$ to both be in some fixed sub-cylinder of the limit set. 
\end{lemma}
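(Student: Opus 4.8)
The plan is to prove the Non-Concentration Property by contradiction, exploiting the Zariski density of $\G$ and the self-similar (dynamical) structure of $\Lambda$. Suppose the conclusion fails. Then for every $\delta > 0$ there exist $\epsilon_\delta \in (0,1)$, a unit vector $w_\delta \in \mathbb{R}^{n-1}$, and a point $x_\delta \in \Lambda$ such that every $y \in \Lambda \cap B_{\epsilon_\delta}(x_\delta)$ satisfies $|\langle y - x_\delta, w_\delta\rangle| < \epsilon_\delta \delta$; in words, the piece of $\Lambda$ near $x_\delta$ at scale $\epsilon_\delta$ is trapped in a thin slab orthogonal to $w_\delta$. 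Taking $\delta \to 0$ along this sequence, after passing to a subsequence we may assume $w_\delta \to w_\infty$ and, using compactness of $\Lambda$, that $x_\delta \to x_\infty \in \Lambda$.

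The first key step is a \emph{renormalization}: I would use the dynamics to blow the scale-$\epsilon_\delta$ picture up to a definite (order $1$) scale. Concretely, each $x_\delta$ lies in some cylinder $\cyl(i_1,\ldots,i_{k_\delta})$ of diameter comparable to $\epsilon_\delta$, and the inverse branch $\Pp^{-1}_{(i_1\ldots i_{k_\delta})}$ is a contraction by a factor comparable to $\epsilon_\delta$ (by \eqref{definec_0kappakappa_1}); applying its inverse (an expanding branch of $\Pp^{k_\delta}$, which maps $\Lambda$ into $\Lambda$ up to the ambient identification) we obtain a rescaled family of subsets of $\Lambda$ that, in the limit, are contained in an affine hyperplane of $\mathbb{R}^{n-1}$ orthogonal to $w_\infty$. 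The point is that these rescaled sets contain, in the Hausdorff limit, a full cylinder of $\Lambda$ — in particular a set carrying positive Patterson--Sullivan measure, hence of Hausdorff dimension $\delta(\G) > 0$. A bounded-distortion estimate for the branches of $\Pp$ (standard from \eqref{definec_0kappakappa_1} and smoothness of $\tau$, $\theta$) is what makes the limiting object genuinely contained in a hyperplane rather than merely in a slab of vanishing relative width.

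The second, and I expect \emph{main}, step is to derive a contradiction from ``a cylinder of $\Lambda$ lies in a proper affine subspace of $\partial\mathbb{H}^n$.'' This is precisely where Zariski density enters. A sub-cylinder of $\Lambda$ is, up to the action of an element of $G$, a relatively open (in $\Lambda$) neighbourhood of a limit point; if such a set lay in a hyperplane $H \subsetneq \mathbb{R}^{n-1}$, then since $\G$ acts minimally on $\Lambda$ its $\G$-translates would cover $\Lambda$, forcing $\Lambda$ to be contained in a countable union of proper spheres/hyperplanes in $\partial\mathbb{H}^n$. But a proper sphere or hyperplane in $\partial\mathbb{H}^n$ is the boundary of a totally geodesic hypersurface, whose stabilizer in $G$ is a proper algebraic (hence positive-codimension) subgroup; a $\G$ whose limit set sits inside such a configuration cannot be Zariski dense (this is the standard fact that Zariski density of $\G$ is equivalent to $\Lambda(\G)$ not being contained in a proper subsphere). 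This contradicts our running hypothesis, so no such $\delta$-sequence exists and the Property holds for some uniform $\delta > 0$.

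Finally, for the last sentence of the statement, I would observe that the whole argument is scale- and location-invariant under the branches of $\Pp$: if $x, y$ are required to lie in a fixed sub-cylinder $\cyl(i_1,\ldots,i_\ell)$, one pulls the picture back by $\Pp^\ell$ to the full limit set, applies the version just proved (with $\epsilon$ rescaled by the bounded-distortion factor), and pushes forward; the uniform constant $\delta$ degrades only by a multiplicative constant depending on $c_0,\kappa,\kappa_1$, which we absorb. I would remark that an effective version of this non-concentration — quantifying how $\delta$ depends on $\G$ — is not needed here; all later uses in Dolgopyat's machinery only require the existence of \emph{some} $\delta > 0$, uniformly over the (finitely many) cylinders at a given depth.
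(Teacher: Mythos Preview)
Your approach is essentially the paper's: argue by contradiction, renormalize the scale-$\epsilon$ picture up to macroscopic size, conclude that a relatively open piece of $\Lambda$ lies in (the image of) a hyperplane, and contradict Zariski density. The only real difference is in how the renormalization is implemented: the paper does it directly with the group action, writing $n_{x_i} a_{-t_i} = \gamma_i g_i$ with $t_i = -\log \epsilon_i$ and $g_i$ ranging in a fixed compact set (this is where convex cocompactness enters), then passing to a subsequential limit $g_i \to g$; you instead use the Markov coding and inverse branches of $\Pp$. Both are legitimate and are really the same use of convex cocompactness in different language. One small point to tighten: your final step lands on ``$\Lambda$ is contained in a countable union of subspheres'' and then invokes the single-subsphere criterion for Zariski density, which is not quite enough as stated; to close the gap, use compactness of $\Lambda$ and openness (in $\Lambda$) of the cylinder to cover $\Lambda$ by finitely many $\Gamma$-translates of it, so that in fact $\Lambda$ lies in a finite union of subspheres, whence (by minimality or by iterating the argument) in a single one.
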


\begin{remark}
This lemma is a statement about the affine properties of the limit set in one particular model for hyperbolic space. Of course there are many different affine geometries we could choose for the boundary of hyperbolic space, but it is clear from the formulation that the statement (if not the constants) of this lemma remains true if we chance to a different affine geometry; in fact we shall use this statement for the affine geometry on the piece on $\tilde U_1$ coming from the identification of $\tilde U_1$ with a small piece the unstable horospherical group $F(z_1)N^+$. 
\end{remark} 
This will be a straightforward result of a Sullivan type argument. 
\begin{proof}
Suppose the lemma fails. Then we may extract sequence $x_i \in \Lambda, \epsilon_i \in (0, 1)$, and $w_i \in \mathbb{R}^{n-1}$ for which
$$|\langle y - x_i, w_i \rangle | < \epsilon_i / i$$
for all $y \in B_{\epsilon_i}(x_i) \cap \Lambda$. Without loss of generality we may also assume that $x_i$ converges to $x\in \Lambda$, and that $w_i$ converges to a unit vector $w$. Since $\G$ is assumed Zariski dense, we know that the limit set $\Lambda$ is not contained in any hyperplane, and so that $\epsilon_i \rightarrow 0$ (in fact we need that the intersection of $\Lambda$ with any ball is not contained in a hyperplane, but the argument is similar). 

Now we want to use self similarity of the limit set. This relies (and indeed must rely) on the fact that $\G$ is convex cocompact. The group element $a_t$ acts on $\mathbb{R}^{n-1}$ as expansion by $e^t$. We write $n_x \in N^-$ for the contracting horospherical element sending $0$ to $x$. We choose $h_{y(x_i)} \in N^+$ to be an unstable horospherical element such that $n_{x_i} h_{y(x_i)}$ has forward end point $y \in \Lambda$. Note that the subset $h_{y(x_i)}$ is compact, and that $n_{x_i} h_{y(x_i)}$ is in the non-wandering set for the $a_t$ flow. Thus we may choose compact subsets $\Omega_1, \Omega_2 \subset G$ with the property that 
\begin{itemize}
\item $n_{x_i} h_{y(x_i)} a_t \in \Gamma \Omega_1$ for all $i$ and all $t\in\mathbb{R}$, and hence
\item $n_{x_i} a_{-t} \in \Gamma \Omega_2$ for all $t > 0$. 
\end{itemize}
Let $t_i = -\log(\epsilon_i)$ and choose $g_i \in \Omega_2, \g_i \in \G$ such that $n_{x_i}a_{-t_i} = \gamma_i g_i$. Then $g_ia_{t_i}n^{-1}_{x_i} = \gamma_i^{-1}$.  In other words the map $g_ia_{t_i}n^{-1}_{x_i}$ preserves the limit set. Let $z_i = g_i(0)$ be the image of $x_i$ under this map. Without loss of generality we have that 
$$g_i \rightarrow g\in \Omega_2 $$
and write $z = g(0)\in \Lambda$.  The failure of our lemma implies that
$$\Lambda \cap g_i(B_1(0)) \subset g_i(\lbrace z: |\langle w_i, z\rangle |< 1/i\rbrace).$$
Taking limits we see that
$$\Lambda \cap g(B_1(0)) \subset g(\lbrace y: \langle w_i, y\rangle= 0\rbrace).$$
In other words (an open subset of) the limit set is contained in (the smooth image of) a hyperplane. That's a contradiction.

\end{proof}

\subsection{Doubling properties for cylinders} For each cylinder $\hat U_i \subset \tilde U_i$ we have a the associated measure $\nu_i$. It is essential for later arguments that these measures $\nu_i$ satisfy a doubling property. 

\begin{lemma} \label{doubling_lemma}
There is a constant $C_1 > 0$ with the following property: for any $i$, any $x \in \hat U_i$ and any positive $\e$ we
$$ \nu_i(B_{2\e} (x) ) \leq C_1 \nu_i(B_{\e} (x) ) . $$
\end{lemma}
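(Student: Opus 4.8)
The plan is to derive the doubling property for the conditional measures $\nu_i$ from the fact that they are (essentially) Patterson--Sullivan measures on pieces of the limit set, together with the convex cocompactness of $\G$. Recall that the Patterson--Sullivan density $\{\mu_x\}_{x \in \mathbb{H}^n}$ of dimension $\delta$ satisfies the conformal relation $\frac{d\mu_{x'}}{d\mu_x}(\xi) = e^{-\delta \beta_\xi(x', x)}$ for the Busemann cocycle $\beta$, and that under our identification of a piece of $\tilde U_i$ with a bounded subset of the unstable horospherical group $F(z_i)N^+ \cong \mathbb{R}^{n-1}$, the measure $\nu_i$ agrees with the restriction of such a Patterson--Sullivan measure up to a $C^1$ density bounded above and below (this bounded-density statement is the one already invoked when defining $\phi_r$ and asserting the $C^1$-dependence of $\nu_u$ on $u$). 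Since multiplying by a function bounded above and below away from zero preserves the doubling property up to a change of constant, it suffices to prove doubling for the Patterson--Sullivan measure itself on the relevant piece of $\Lambda$.

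First I would fix the shadow description: for $\xi \in \Lambda$ and $\e$ small, the ball $B_\e(\xi)$ in the $\mathbb{R}^{n-1}$ coordinate on $\tilde U_i$ corresponds, up to uniformly bounded multiplicative distortion, to a shadow $\mathcal{O}_o(B(\xi_\e, R))$ of a ball of fixed radius $R$ about a point $\xi_\e$ on the geodesic ray from $o$ toward $\xi$ at depth $t_\e := -\log \e + O(1)$. Sullivan's shadow lemma (valid here precisely because $\G$ is convex cocompact, so the relevant points stay in a compact part of $\Omega$) then gives
$$ \mu_o\bigl(\mathcal{O}_o(B(\xi_\e, R))\bigr) \asymp e^{-\delta t_\e} \asymp \e^\delta, $$
with implied constants depending only on $\G$ and $R$, uniformly over $\xi$ in the (compact) limit set and over all small $\e$. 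Comparing the estimate for radius $\e$ with that for radius $2\e$ (which shifts $t_\e$ by $\log 2$) yields $\mu_o(B_{2\e}(\xi)) \asymp 2^\delta \mu_o(B_\e(\xi))$, i.e. an upper bound of the desired form with $C_1$ depending only on $\delta$ and the shadow-lemma constants. Pushing this back through the bounded $C^1$ density gives the claim for $\nu_i$, with a uniform $C_1$ over the finitely many indices $i$.

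The main obstacle is making the comparison between a Euclidean ball $B_\e(x)$ in the horospherical coordinate and a metric shadow genuinely uniform: one must check that the bounded-distortion constants relating these two families of sets do not degenerate as $\e \to 0$ or as $x$ moves within $\hat U_i$, and that the geodesic rays toward points of $\hat U_i$ indeed remain in a fixed compact subset of $\Omega$ so that the shadow lemma applies with uniform constants. This is exactly where convex cocompactness is used, and where a cusp would break the argument; in our setting it is routine but should be stated carefully. A convenient alternative, avoiding shadows altogether, is to use the $\Pp$-invariance: the pushforward relation $\Pp_* \nu_i$ and the bounded distortion \eqref{definec_0kappakappa_1} of the inverse branches let one transfer a doubling estimate at a fixed scale (where it is trivial, $\Lambda$ being a fixed compact set meeting each $U_i$ in positive $\nu_i$-measure) down to all small scales by finitely many applications of contracting branches, again using convex cocompactness to keep the bounded-distortion constants uniform. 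Either route reduces the lemma to the shadow lemma / bounded distortion, both of which are standard under our running hypotheses.
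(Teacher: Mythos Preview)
Your approach is essentially the paper's, and the main idea---reduce to doubling for the Patterson--Sullivan measure via the shadow lemma---is correct. There is, however, one genuine gap in your primary route.

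You reduce to proving doubling for ``the Patterson--Sullivan measure itself on the relevant piece of $\Lambda$'', but then prove doubling for the \emph{unrestricted} PS measure $\mu_o$ via the shadow estimate $\mu_o(B_\e(\xi))\asymp \e^\delta$. The passage back to $\nu_i$ is not automatic: $\nu_i$ is (up to bounded density) the PS measure \emph{restricted to $U_i$}, and doubling does not in general survive restriction. Concretely, for $x\in\hat U_i$ near the boundary of $U_i$, the ball $B_\e(x)$ may meet the limit set outside $U_i$, so $\nu_i(B_\e(x))\asymp\mu^{\PS}(B_\e(x)\cap U_i)$ could in principle be much smaller than $\mu^{\PS}(B_\e(x))\asymp\e^\delta$. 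You need a lower bound $\nu_i(B_\e(x))\gtrsim \e^\delta$ (equivalently $\gtrsim\mu^{\PS}(B_\e(x))$), uniform in $x\in\hat U_i$ and in $\e$.

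The paper supplies exactly this missing inequality as a separate lemma, proved by the dynamical argument you list as your ``alternative'': for $x\in\hat U_i$ one finds a cylinder of depth $k\sim\log(1/\e)$ (the image of some $\tilde U_j$ under a section of $\Pp^k$) that contains $x$, lies entirely inside $\hat U_i$, has diameter $\lesssim\e$, and carries $\nu_i$-mass $\asymp e^{-\delta\tau^{(k)}}\asymp\e^\delta$. So your two routes are not alternatives---both ingredients are needed. The shadow lemma gives the upper bound $\nu_i(B_{2\e}(x))\le C\mu^{\PS}(B_{2\e}(x))\lesssim\e^\delta$, and the cylinder argument gives the lower bound $\nu_i(B_\e(x))\gtrsim\e^\delta$; together they yield the doubling constant $C_1$.
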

We will delay the proof of this lemma until section \ref{matrix_coefficient_section}, when more notation is available.

\section{Non-Local-Integrability and Brin-Pesin moves} \label{nli_section}

We return to the first part of our mantra: oscillation leads to cancellation. To apply this in our case we need to show that the summand terms of \eqref{here_is_your_transfer_operator} really are rapidly oscillating relative to one another. That is our aim in the current section. 

The formal statement we'll prove is Lemma \ref{NLI}, which is a version of Non-Local-Integrability (NLI) condition appropriate to the study of compact group extensions. Our argument is a refined version of a similar discussion from \cite{Do}, who discussed the case of trivial $M$-group. 

\subsection{Taxicab paths in rectangles and Brin-Pesin moves}
Consider a rectangle $R_1 = [U_1, S_1] \subset \G \ba G$ which we assumed small inasmuch as its diameter is less than the injectivity radius of $\G$ divided by $1000$. For a fixed base point $x_0 \in R_1$ we have the taxi cab path set:
$$P_{R_1, x_0} = \left\lbrace\mbox{ sequences } (x_0, x_1, x_2, \ldots, x_{r-1}, x_r = x_0): \begin{array}{l}   \mbox{ for every $j$ we have either}\\ \mbox{$x_{j+1} \in [x_j, S_1]$ or $x_{j+1} \in [U_1, x_j]$}\end{array} \right\rbrace. $$

This group has a natural family of maps into $MA$, given by Brin-Pesin moves. Choose the a lift $\check R = [\check U, \check S]$ of $R_1$ to the universal cover $G/M$. Choose $g$ such that $gAM \cap \check R = \check x_0$, the lift of $x_0$ to $\check R$. Given a sequence $(x_0, \ldots x_r = x_0) \in \hat P_{x_0, R_1}$ we lift it to a sequence $( \check{x}_0, \ldots , \check{x}_r)$ in $\check R$. We then associate a sequence $(g_0 = g, g_1, g_2, \dots g_r \in gAM)$ by requiring
\[ g_{i+1} \in g_iN^+ \mbox{ and } g_{i+1} MA \ni \check x_{i+1} \mbox{ if } x_{i+1} \in [U_1, x_i]\]
or 
\[ g_{i+1} \in g_iN^- \mbox{ and } g_{i+1} MA \ni \check x_{i+1}\mbox{ if } x_{i+1} \in [x_i, S_1].\]
We note then that $g_r \in gMA$. We denote their difference by 
\[  B_{g, \check R}(x_1, \ldots x_r) := g_r^{-1}g \in AM.\] 
We note that this function $\hat B$ is independent of choice of lift $\check R$, so we will suppress that part of the notation. We then have a function
\[  B_{g, R_1}:  P_{R_1, x_0} \rightarrow MA,\]
which is clearly related to Brin-Pesin moves. 

It will be useful to think of $B_{g, r_1}$ as a function on $N^- \times N^+$, which we may  do as follows. Recall the base point $z_1 \in R_1$ from subsection \ref{Markov_sections}. We can  identify $U_1$ with a small piece of $N^+$ around the origin, by sending $h \in N^+$ to $F(z_1)h$ (or more precisely to the projection of that in the unit tangent bundle). Similarly we identify $S_1$ with a small piece around the origin of $N^-$. In this way we may think of the Brin-Pesin homomorphism as giving a map from (some subset of) $N^+ \times N^-$ to $AM$ by sending  
$$ \Xi: (h, n) \mapsto B_{F(z_1)}(z_1, n, [h, n], h, z_1).$$
In practice we'll be interested in the case where $n$ is fixed at some nontrivial value, and $h$ varies; we therefore think of $\Xi$ principally as a function $N^+ \rightarrow MA$ defined by 
$$\Xi_n(h) := \Xi(h, n).$$

Now we relate this to $\tau, \theta$ and the NLI condition. Suppose we have a section $v$ of $\sigma^k$ defined on $\tilde U_1$ and taking values in $U_{i}$ respectively; formally here we mean, in the language of subsection \ref{smooth_structures},  
$$v = \Pp^{-1}_{(i_1, i_2 \ldots i_{k}, 1)} $$
where $(i_1, i_2 \ldots i_{k}, 1)$ as an admissible sequence. Associated to the section $v$ we have an element $ n = n(v) \in N^-$ given by
\begin{equation} F(\poi^k(v(z_1))) = F(z_1)  n. \label{horospherical_elements_for_sections}\end{equation} 

For $u\in U_1$ write $\phi(u) = \phi_{v, k}(u) := a_{\tau^{(k)}(v(u))} \theta^{(k)}(v(u)) \in MA$. We need to say that $\phi$ is in some sense rapidly oscillating. We do this by relating it to the $\Xi$ and the associated Lie theory as follows. 
\begin{lemma} \label{Brin_pesin_versus_path_maps} Let $n = n(v)$, and suppose that $u \in U_1$ with  $F(u) = F(z_1)h$ for some $h\in N^+$. Then
$$ \phi^{-1}(z_1) \phi(u) = \Xi_{ n}(h).$$
\end{lemma}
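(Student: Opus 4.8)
The plan is to unwind both sides of the claimed identity into statements about the universal cover $G/M$ and then match them termwise. First I would unpack the left-hand side: by the Notation block, $\phi(u) = a_{\tau^{(k)}(v(u))}\theta^{(k)}(v(u)) = \Phi^{(k)}(v(u))$, the $k$-fold ergodic product of the $AM$-valued cocycle $\Phi = a_\tau\theta$ along the $\Pp$-orbit of $v(u)$. The defining relation $F(x)a_{\tau(x)} = F(\poi(x))\theta(x)^{-1}$ telescopes to $F(v(u))\Phi^{(k)}(v(u)) = F(\poi^k(v(u)))$ (after accounting for the ordering of the $\theta$-factors, which is exactly why $\theta^{(k)}$ was defined as a left-to-right product). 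So $\phi(u)$ is precisely the element of $AM$ that transports the frame $F(v(u))$ to the frame $F(\poi^k(v(u)))$ along a geodesic segment; and $\phi^{-1}(z_1)\phi(u)$ compares this transport starting from $u$ against the one starting from $z_1$.

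Next I would identify the horospherical data. Since $v$ is the section $\Pp^{-1}_{(i_1,\ldots,i_k,1)}$, the point $v(u)$ lies on $W^{ss}(\,\cdot\,)\cap W^{wu}(\text{something in }U_{i_1})$, and $\poi^k(v(u))$ lands back near $z_1$ in $U_1$; equation \eqref{horospherical_elements_for_sections} says $F(\poi^k(v(z_1))) = F(z_1)n$ with $n = n(v)\in N^-$. Because $\tau$ and $\theta$ are constant along strong stable leaves (the Lemma preceding the Notation block), the stable coordinate of $v(u)$ does not affect $\Phi^{(k)}(v(u))$, so I may compute using the representative $v(u)$ whose frame differs from $F(z_1)h\cdot(\text{stuff in }N^-)$ appropriately — concretely, $F(v(u)) = F(z_1)\,h\,(\text{an }N^-\text{ factor})$ where $F(u) = F(z_1)h$. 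Now lift everything to $\check R = [\check U,\check S]$ with base frame $g$ over $\check x_0$: the four-step taxicab path $(z_1, n, [h,n], h, z_1)$ in $P_{R_1,z_1}$ has, by the very definition of $B_{g,R_1}$, a lift $(g_0=g, g_1, g_2, g_3, g_4\in gMA)$ built by alternately moving in $N^-$ and $N^+$ to track the $\check x_i$, and $\Xi_n(h) = B_{F(z_1)}(z_1,n,[h,n],h,z_1) = g_4^{-1}g$.

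The core of the argument is then to recognize that the geodesic transport defining $\phi^{-1}(z_1)\phi(u)$ traces out exactly this taxicab path in $G/M$: going from the frame over $u$ backward along the stable/unstable foliations to $z_1$, then applying the geodesic flow for time $\tau^{(k)}$ to reach near $\poi^k(v(u))$, then correcting by the $N^-$-element $n$ and the $N^+$-element $h$ to return to $z_1$ — this is precisely the composite of Brin–Pesin moves encoded by $\Xi_n(h)$. The key identities to verify are that (i) the geodesic-flow displacement and holonomy accumulated by $\phi$ over a full return to $R_1$ is an $AM$-element, so the path does close up in $gMA$ as required, and (ii) the $N^+$ versus $N^-$ bookkeeping matches the two cases in the definition of $B_{g,R_1}$ (the $[U_1,x_j]$ moves are the unstable/$N^+$ ones, matching the framing convention on $U_i$, and the $[x_j,S_1]$ moves are the stable/$N^-$ ones, matching the convention on $R_i$). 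I expect the main obstacle to be (ii): carefully checking that the orientation and ordering conventions in the definition of the section $F$ (backward-asymptotic along $U_i$, forward-asymptotic along $S_i$), in the ergodic product $\theta^{(k)}$, and in the taxicab-path lift all line up so that no stray factor or inverse appears — i.e., the bookkeeping of which group acts on which side. Once the conventions are pinned down, the identity is a formal consequence of the cocycle relation for $\Phi$, the stable-leaf invariance of $\tau,\theta$, and the definition of $\Xi_n$.
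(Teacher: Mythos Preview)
Your proposal is correct and follows the same approach as the paper: both compute $\Xi_n(h) = B_{F(z_1)}(z_1, n, [h,n], h, z_1)$ by explicitly tracking the lift $(g_0, g_1, g_2, g_3, g_4)$ through the frame relations and matching it against the telescoped cocycle identity $F(v(u))\,\phi(u) = F(\poi^k(v(u)))$. The paper makes this concrete by writing each $g_i$ directly as $F(\,\cdot\,)$ times an explicit $AM$-factor (e.g.\ $g_1 = F(z_1)n = F(v(z_1))\phi(z_1)$, then $g_2 = F(v(u))\phi(z_1) = F([u,s])\phi^{-1}(u)\phi(z_1)$, etc.), rather than narrating it as ``flow then correct''; your slightly heuristic description of the path (``apply geodesic flow for time $\tau^{(k)}$, then correct by $n$ and $h$'') is the right picture but would be cleaner if replaced by this direct frame-by-frame computation, which automatically handles the ordering/inversion bookkeeping you flagged as the main obstacle.
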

\begin{proof} 
Let $ s \in S_1$ be such that $\poi^k(v(z_1)) = \tilde s$. Note that under the identifications $N^-$ with $S_1$ and $N^+$ with $U_1$ we have $``h = u"$ and $``n = s"$. Choose group elements
\begin{itemize}
\item $n' \in N^-$ such that $F([u,  s]) = F(u)n'$, and
\item $h' \in N^+$ such that $F(v(u')) = F(v(u))h'.$
\end{itemize} 
Choose a lift $\check R_1$ of $R_1$ to the universal cover, and let $g$ be the associated lift of $F(z_1)$ to $G$. We now calculate $\Xi (h,  n) = B_{g}(z_1, n, [h, n], h, z_1)$. For this we simply take the sequence of group elements
\[ g_0 =  g\]
\[ g_1 =  g_0 n = F(v(z_1))\phi(z_1) \]
\begin{multline*}      g_2 =  g_1 \hat h' = F(v(z_1))h'\phi(z_1) \\ = F(v(u)) \phi(z_1) = F([u,  s]) \phi^{-1}(u) \phi(z_1)   \end{multline*}
for $\hat h' = \phi^{-1}(z_1)h'\phi(z_1) \in N^+$ the appropriate conjugate of $h'$. Next
\[ g_3 =g_2(\hat{n}')^{-1} =  F([u,  s])(n')^{-1} \phi^{-1}(u) \phi(z_1)  = F(u)\phi^{-1}(u) \phi(z_1)  \] 
for $\hat n' \in N^-$ the appropriate conjugate of $n'$. Finally we have 
\[ g_4 = g_3 (\hat h)^{-1} = F(u)h^{-1} \phi^{-1}(u) \phi(z_1)  = F(z_1)  \phi^{-1}(u) \phi(z_1)  \]
for $\hat h$ the appropriate conjugate of $h$. This gives 
\[ B_{g}( z_1, n, [h ,n], n, z_1)= g_4^{-1}g_0 =  \phi^{-1}(z_1) \phi(u). \]
\end{proof}
The Non-Local-Integrability condition is supposed to say that $\phi$ is in some sense rapidly oscillating. The relationship to $\Xi$ will allow us to see this as an easy consequence of Lie theory.

\begin{Nota}
For a function $f:M_1 \rightarrow M_2$ between Riemannian manifolds we write $\nabla f$ for the derivative map, and $||f||_{C^1}$ for the supremum of the operator norms of $(\nabla f)_x: T_xM_1 \rightarrow T_{f(x)}M_2$ with $x \in M_1$. In particular this is convenient as it assures that the Lipschitz norm of $f$ is bounded above by $||f||_{C^1}$. 
\end{Nota}

\begin{lemma} \label{comparetoAdjointaction}
The image of the derivative $\nabla_h\Xi_n( \mathrm{T}_eN^+) \subset  \mathrm{T}_e MA$ is equal to the $MA$-components of the image of the adjoint action $\Ad_n(\mathrm{T}_eN^+) \subset \mathrm{T}_eG$. 
\end{lemma}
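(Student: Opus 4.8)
The plan is to linearise the Brin--Pesin path that defines $\Xi_n$ at the identity of $N^+$ and to read off the derivative using the bracket relations in $\mathfrak g = \mathrm{Lie}(G) = \mathfrak{so}(n,1)$, with the key shortcut that the output lies in $MA$ \emph{a priori} (which is exactly what Lemma \ref{Brin_pesin_versus_path_maps} provides). Recall $\mathfrak g = \mathfrak n^-\oplus\mathfrak m\oplus\mathfrak a\oplus\mathfrak n^+$ with $\mathfrak n^\pm$ abelian, $[\mathfrak n^-,\mathfrak n^+]\subseteq\mathfrak m\oplus\mathfrak a$, and $[\mathfrak m\oplus\mathfrak a,\mathfrak n^\pm]\subseteq\mathfrak n^\pm$. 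Writing $n = \exp Y$ with $Y\in\mathfrak n^-$, these relations give, for $X\in\mathfrak n^+$,
\[
\Ad_{\exp Y}(X) \;=\; X + [Y,X] + \tfrac12[Y,[Y,X]],
\]
the three summands lying in $\mathfrak n^+$, $\mathfrak m\oplus\mathfrak a$ and $\mathfrak n^-$ respectively; in particular the $(\mathfrak m\oplus\mathfrak a)$-part of $\Ad_n(X)$ is precisely $\mathrm{ad}_Y(X) = [Y,X]$. Thus it suffices to show that the image of $\nabla_h\Xi_n$ at $e\in N^+$ is $\mathrm{ad}_Y(\mathfrak n^+)$.

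For the linearisation, fix the lift $\check R$ of $R_1$ and normalise the associated lift of $F(z_1)$ to be $g_0 = e$; put $h = \exp(\varepsilon X)$ with $\varepsilon$ small. The path $(z_1,\, n,\, [h,n],\, h,\, z_1)$ produces group elements $g_1 = g_0 n$, $g_2 = g_1 h_1$, $g_3 = g_2 n_1$, $g_4 = g_3 h_2$ with $h_1,h_2\in N^+$, $n_1\in N^-$, each the unique horospherical element for which the geodesic through $g_{i+1}M$ passes through the corresponding path point. When $\varepsilon = 0$ the path degenerates to $z_1\to [z_1,n]\to[z_1,n]\to z_1\to z_1$, so $h_1 = e$, $n_1 = n^{-1}$, $h_2 = e$ and $g_4 = e$; hence for small $\varepsilon$ we may write $h_1 = \exp(\varepsilon A_1 + O(\varepsilon^2))$, $n_1 = n^{-1}\exp(\varepsilon A_2 + O(\varepsilon^2))$, $h_2 = \exp(\varepsilon A_3 + O(\varepsilon^2))$ with $A_1,A_3\in\mathfrak n^+$, $A_2\in\mathfrak n^-$, and $A_1 = A_1(X)$ linear in $X$ (in fact $A_1(X) = X$, as one checks by the same $MA$-membership argument below applied to the single move $g_1\to g_2$; for the statement of the lemma one only needs that $A_1(\cdot)$ is invertible, i.e.\ that the $U$-coordinate is a genuine coordinate). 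Then, using $n = \exp Y$ and the Baker--Campbell--Hausdorff formula, to first order in $\varepsilon$,
\[
g_4 \;=\; n\, h_1\, n_1\, h_2 \;=\; \exp(Y)\exp(\varepsilon A_1)\exp(-Y)\exp(\varepsilon A_2)\exp(\varepsilon A_3) \;=\; \exp\!\big(\varepsilon\big(\Ad_{\exp Y}(A_1) + A_2 + A_3\big)\big) + O(\varepsilon^2).
\]

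To conclude, invoke Lemma \ref{Brin_pesin_versus_path_maps}: $\Xi_n(h) = g_4^{-1}g_0 = g_4^{-1}$ lies in $MA$, so the exponent $\Ad_{\exp Y}(A_1) + A_2 + A_3$ lies in $\mathfrak m\oplus\mathfrak a$ to first order. Splitting $\Ad_{\exp Y}(A_1) = A_1 + [Y,A_1] + \tfrac12[Y,[Y,A_1]]$ into its $\mathfrak n^+$-, $(\mathfrak m\oplus\mathfrak a)$- and $\mathfrak n^-$-parts and matching with $A_3\in\mathfrak n^+$, $A_2\in\mathfrak n^-$ forces $A_3 = -A_1$ and $A_2 = -\tfrac12[Y,[Y,A_1]]$, leaving $g_4 = \exp(\varepsilon[Y,A_1]) + O(\varepsilon^2)$ and hence $\nabla_h\Xi_n|_e(X) = -[Y, A_1(X)]$. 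Since $A_1(\cdot)$ is a linear automorphism of $\mathfrak n^+$, the image of $\nabla_h\Xi_n|_e$ is $\{[Y,A_1(X)]:X\in\mathfrak n^+\} = \mathrm{ad}_Y(\mathfrak n^+)$, which by the first paragraph is exactly the $MA$-component of $\Ad_n(\mathfrak n^+)$. The main obstacle here is purely organisational: correctly identifying the lift, the four moves, and their degenerate ($\varepsilon=0$) values from the conventions of \S\ref{Markov_sections} and \S\ref{A_model_for_frame_flow}; once this is in place the $MA$-membership does the rest and no higher-order terms need to be tracked.
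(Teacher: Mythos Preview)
Your argument is correct and takes essentially the same approach as the paper: both linearize the product of horospherical moves defining $\Xi_n$ at $h=e$ and identify the derivative via the root-space decomposition of $\mathfrak g$. The paper phrases this more tersely---invoking the implicit function theorem to produce the factors $r_{n,1},r_{n,2},r_{n,3}$ and then declaring the result ``immediate by taking derivatives in the $h$ component''---whereas you carry out the BCH expansion explicitly; the only cosmetic discrepancy is that the paper traverses the closed path in the opposite direction, which flips a sign but leaves the image unchanged.
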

\begin{proof}
For fixed $n$ in $N^-$ the implicit function theorem gives smooth functions
\begin{eqnarray} r_{n, 1}: N^+ &\rightarrow& N^-\\
r_{n, 2}: N^+ &\rightarrow& N^+\\
r_{n, 3}: N^+ &\rightarrow& N^-
\end{eqnarray} 
with $r_{n, 1}, r_{n, 3}$ sending the identity to $n, n^{-1}$ respectively, and $r_{n, 2}(h) = h^{-1}$ such that
\begin{eqnarray}
F(x_0) h MA &\ni& x_0 = [h, e]\\
F(x_0)hr_{n, 1}(h) MA&\ni& [h, n]\\
F(x_0)hr_{n, 1}(h) r_{n, 2}(h)MA &\ni& [h, e]\\
F(x_0)hr_{n, 1}(h)r_{n, 2}(h)r_{n, 3}(h)MA &\ni& [e, e].
\end{eqnarray}
The result is now immediate by taking derivatives in the $h$ component.
\end{proof}

We can now state  and prove the NLI property appropriate to this setting:
For sections $v_0 \ldots v_k$ of $\sigma^N$ defined on $U_1$ we define functions
$$ \phi_j(u) = a_{\tau^{(N)}}(u)\theta^{(N)}(u)$$
and 
$$BP_j(u', u) := \phi^{-1}_0(u)\phi_0(u')\phi^{-1}_j(u') \phi_j(u) \in AM.$$

\begin{lemma} \label{NLI} There exists $\epsilon_0 \in (0, 1)$, an open subset $U_0 \subset \tilde U_1$, and $N_0$ such that for any $N > N_0$ there are finitely many sections $v_0 \ldots v_{j_0}$ of $\Pp^N$ defined on $\tilde U_1$  with the following property; for any $u \in U_0$ and any unit tangent vector $w \in \mathrm{T}_eAM$  there is a unit tangent vector $z \in \mathrm{T}_u\tilde U_1$ and  $i \in \lbrace 1 \ldots r \rbrace $ such that $\langle w, \nabla_{u'}BP_i(\cdot, u)_*z\rangle \geq \epsilon_0,$ the derivative here is to be evaluated at $u$.
\end{lemma}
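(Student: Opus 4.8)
The plan is to deduce the statement from the Lie-theoretic description of $\Xi$ established in Lemmas \ref{Brin_pesin_versus_path_maps} and \ref{comparetoAdjointaction}, together with the Non-Concentration Property (Lemma \ref{NCPlemma}) applied in the affine model on $\tilde U_1 \cong F(z_1)N^+$. The first step is bookkeeping: for a section $v_j$ of $\Pp^N$ let $n_j = n(v_j) \in N^-$ be the element attached to it by \eqref{horospherical_elements_for_sections}, and use Lemma \ref{Brin_pesin_versus_path_maps} to rewrite
$$ \phi_j^{-1}(z_1)\phi_j(u) = \Xi_{n_j}(h), \qquad \text{where } F(u) = F(z_1)h. $$
Feeding this into the definition of $BP_j$, the $z_1$-dependent factors telescope and one obtains $BP_j(u',u) = \Xi_{n_0}(h)^{-1}\Xi_{n_0}(h')\,\Xi_{n_j}(h')^{-1}\Xi_{n_j}(h)$ with $F(u)=F(z_1)h$, $F(u')=F(z_1)h'$. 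Differentiating in the $u'$ variable at $u'=u$ (so $h'=h$), the middle two factors are the ones that survive, and the derivative $\nabla_{u'}BP_j(\cdot,u)$ is, up to the fixed conjugation by $\Xi_{n_0}(h)^{-1}$ and up to a uniformly bounded Jacobian coming from the identification of $\tilde U_1$ with $N^+$, the difference of the two derivative maps $\nabla_h\Xi_{n_0}$ and $\nabla_h\Xi_{n_j}$.

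\textbf{Reduction to a non-degeneracy statement.} By Lemma \ref{comparetoAdjointaction}, the image of $\nabla_h\Xi_{n}$ is the $MA$-component of $\Ad_n(\T_eN^+) \subset \T_eG$. Using the decomposition $\gfrak = \npz^- \oplus \mathfrak{m} \oplus \mathfrak{a} \oplus \npz^+$ and the fact that, for $n \in N^-$ small, $\Ad_n$ acts on $\T_eN^+$ by $X \mapsto X + [\log n, X] + \tfrac12[\log n,[\log n,X]] + \cdots$, the $\mathfrak m\oplus\mathfrak a$-component of $\Ad_n(X)$ is a quadratic-leading expression in $\log n$ that is identically zero when $n=e$ and whose Hessian in $\log n$ is the non-degenerate pairing coming from $[\npz^-,\npz^+]\to\mathfrak m\oplus\mathfrak a$. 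Consequently it suffices to choose the sections $v_0,\ldots,v_{j_0}$ so that the attached elements $n_0 = n(v_0),\ldots$ are spread out: concretely, for every unit $w \in \T_eAM$ we need some $n_j$ with $\langle w, \text{($\mathfrak m\mathfrak a$-component of }\Ad_{n_j}(\T_eN^+))\rangle$ bounded below, uniformly, and moreover differing from the corresponding quantity for $n_0$. This is where the geometry of the limit set enters. Since the forward endpoints of $n_{x_i}h$ are dense in $\Lambda$ (convex cocompactness), the elements $n_j$ realizable as $n(v)$ for sections of $\Pp^N$ with $N$ large sweep out a set whose image under $\log: N^- \to \npz^-$ is $\e_0$-dense (in the affine sense of Lemma \ref{NCPlemma}) in a fixed neighbourhood of $0$ in $\Lambda$; in particular, for any unit vector $w$, Lemma \ref{NCPlemma} produces an $n_j$ for which $\log n_j$ has a component of size $\gtrsim \e_0$ transverse to the ``bad'' hyperplane on which the relevant pairing with $w$ vanishes. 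Taking $N_0$ large enough that there are enough such sections, and $U_0$ a small enough neighbourhood in $\tilde U_1$ that all the linear-algebra estimates hold uniformly over $u \in U_0$, yields the claimed $\e_0$ and finite family.

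\textbf{Main obstacle.} The conceptual content is essentially Lie theory plus the self-similarity of $\Lambda$, so the genuine difficulty is making the ``spread out'' condition on $\{n_j\}$ quantitative and uniform in $w$ simultaneously. One must check that a single finite collection of sections works for \emph{all} unit $w \in \T_eAM$ at once — the natural argument produces, for each $w$, some good $n_j$, and one needs a compactness argument in $w$ together with openness of the conditions to collapse this to a finite list — and one must verify that the quadratic non-degeneracy of the $\mathfrak m\mathfrak a$-component of $\Ad_n$ does not degenerate when restricted to the directions actually populated by $\Lambda$ (this is exactly why the Zariski-density hypothesis, used in the proof of Lemma \ref{NCPlemma}, is needed: it forbids $\Lambda$ from lying in a hyperplane, hence forbids the populated directions from being confined to a subspace where the pairing is degenerate). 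Handling the passage from the infinitesimal (Hessian) statement to an honest lower bound $\langle w, \nabla_{u'}BP_i(\cdot,u)_* z\rangle \geq \e_0$ at scale $\e_0$ — rather than an asymptotic-as-$n\to e$ statement — is the remaining technical point, handled by shrinking $U_0$ and choosing $\e_0$ small relative to the fixed constants coming from the Markov section and the norms of $\Ad$ on the relevant compact neighbourhood.
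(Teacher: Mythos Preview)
Your overall architecture is right: reduce $BP_j$ to the $\Xi$-functions via Lemma~\ref{Brin_pesin_versus_path_maps}, differentiate, and invoke Lemma~\ref{comparetoAdjointaction} to turn the problem into a Lie-theoretic spanning statement for the adjoint action of $N^-$ on $\T_eN^+$. But there is one factual slip and one genuine divergence from the paper's argument.

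\textbf{The Lie-theoretic computation.} You assert that the $\mathfrak m\oplus\mathfrak a$-component of $\Ad_n(X)$ is ``quadratic-leading'' in $\log n$ with a non-degenerate Hessian. In rank one this is off by one order: for $Y=\log n\in\npz^-$ and $X\in\npz^+$ the Baker--Campbell--Hausdorff series terminates as $\Ad_{\exp Y}X=X+[Y,X]+\tfrac12[Y,[Y,X]]$, and these three terms lie in $\npz^+$, $\mathfrak m\oplus\mathfrak a$, $\npz^-$ respectively. So $\mbox{proj}_{MA}\Ad_n(X)=[Y,X]$ is \emph{linear} in $Y$, and the relevant pairing is simply the bracket $\npz^-\times\npz^+\to\mathfrak m\oplus\mathfrak a$, not its Hessian. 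This actually makes your argument easier, but the ``quadratic'' language and the appeal to a Hessian are incorrect as written.

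\textbf{The choice of sections.} You propose to use the Non-Concentration Property (Lemma~\ref{NCPlemma}) to manufacture, for each unit $w\in\T_eAM$, a suitable $n_j$, and then a compactness-in-$w$ argument to cut down to finitely many. This is not how the paper proceeds, and your route is more circuitous. The paper \emph{does not use Lemma~\ref{NCPlemma} here at all}. Instead it first selects, in advance and independently of any section, abstract elements $n'_0=e,n'_1,\ldots,n'_{j_0}\in N^-$ with $F(z_1)n'_j$ lying in the interior of $R_1$ and satisfying the single spanning condition
\[
\sum_{j}\mbox{proj}_{MA}\Ad_{n'_j}(\T_eN^+)=\T_eMA.
\]
That such a finite collection exists follows directly from Zariski density (the limit set is not contained in any algebraic subvariety, so the stable slice $S_1$ is not confined to the degeneracy locus of the bracket pairing). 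With the $n'_j$ fixed, compactness of the unit sphere in $\T_eMA$ immediately yields a uniform lower bound $\epsilon=\inf_w\sup_{j,X}|\langle\mbox{proj}_{MA}\Ad_{n'_j}X,w\rangle|>0$. Only \emph{after} this does the paper bring in dynamics: it uses density of $\poi^{N_0}$-images in $R$ (not NCP) to approximate each $n'_j$ by an $n_j=n(v_j)$ coming from an actual section $v_j$ of $\Pp^N$. The spanning condition is open, so survives the approximation, and the derivative of $BP_j(\cdot,u)$ at $u=z_1$ becomes $\mbox{proj}_{MA}(\Ad_{n_0}-\Ad_{n_j})$; since $n_0$ is near $e$ this is essentially $-\mbox{proj}_{MA}\Ad_{n_j}$ and the bound carries over. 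Extension from $u=z_1$ to a neighbourhood $U_0$ uses only the uniform $C^2$ bound on $BP_j$ coming from the formula $BP_j(h',h)=\Xi_{n_0}(h)^{-1}\Xi_{n_0}(h')\Xi_{n_j}(h')^{-1}\Xi_{n_j}(h)$ with $n_j$ ranging in the fixed compact set $S_1$.

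The paper's ``choose abstract spanning set, then approximate by sections'' strategy cleanly separates the algebra (spanning) from the dynamics (realizability), and avoids the delicate issue you flag of making NCP-type estimates uniform in $w$. Your approach could probably be repaired, but it mixes the two steps and imports a tool (NCP) that the paper reserves for a different purpose --- the construction of the partner points $y^k_r$ in Section~6.
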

\begin{proof} 
Consider the base point $z_1 \in  \hat U_1$ for the rectangle $\hat R_1$ and choose  a sequence $n'_0, n'_1, \ldots,  n'_{j_0}\in N^-$  with the properties that 
\begin{itemize}
\item $n'_0 = e$,
\item $F(z_1)n'_i$ lie in the interior of $R_1$ for all $i$, and
\item $\sum_i \Ad_{n'_i} \mathrm{T}_eN^+ = \mathrm{T}_eMA$. 
\end{itemize}
This is possible using the fact that Zariski-density of $\G$ prevents the limit set $\Lambda$ from being contained in any algebraic subset of $\mathbb{R}^{n-1}$. We note that
\begin{equation} \mbox{ the third condition is stable under small perturbations of the $n'_i$.}\label{small_perturbation_condition}\end{equation}

We write $\epsilon > 0$ for the (positive) infimum
$$ \epsilon = \inf_w \sup_{X, j\geq 1} | \langle \mbox{proj}_{MA} \Ad_{n'_j}X, w \rangle |$$
where $w$ ranges over unit tangent vectors in $\mathrm{T}_eMA$, and $X$ ranges over unit tangent vectors in $\mathrm{T}_eN^+$. 

Now recall that images of the unstable piece $U_2$ under $\poi^{N_0}$ fill out $R$ densely as $N_0$ gets large. We may therefore choose $u_0 \ldots u_{j_0} \in U_2$ such that 
$$\poi^{N_0}(u_j) = [z_1, s_j] \in R_1$$
with $F([z_1, s_j]) = F(z_1) \hat n_j$ and $\hat n_j$ very close to $n'_j$. Taking $N_0$ even larger if necessary we may assume that $ F(\poi^{N_0}([u_j, s]))= F(z_1) \hat n_j(s) $ with $\hat n_j(s)$ extremely close to $n_j$ for all $s \in S_2$ in the sense of \eqref{small_perturbation_condition}; we're just contracting out the stable direction. 

Now each of these $u_j \in U_2$ defines an admissible sequence of length $N_0$ from $U_2$ to $U_1$, and therefore defines a section $\hat v_j$ of $\Pp^{N_0}$ taking $\tilde U_1$ into $\tilde U_2$. For any $N \geq N_0$ we may therefore choose an arbitrary section $\hat v $ of $\Pp^{N - N_0}$ defined on $U_2$, and pick $v_j = \hat v \circ \tilde v_j$, which is now a section of $\Pp^N$. 

Note that $P^{N - N_0}(z_1) \in \hat R_2$, We may choose elements $s_j \in S_2$ such that $P^{N - N_0}(z_1) = [u_j, s_j]$. Write $n_j := \hat n_j(s_j)$. Then $n_j$ is extremely close to $n'_j$. Furthermore $n_j$ is exactly the stable horospherical element associated to the section $v_j$ as in \eqref{horospherical_elements_for_sections}.



We claim that this choice of $v_0 \ldots v_{j_0}$ satisfies the conditions of Lemma \ref{NLI}. Pick a unit vector $w \in \mathrm{T}_eMA$. Then we may choose an index $j\geq 1$ and a unit tangent vector $X \in \mathrm{T}_eN^+$ such that $|\langle \mbox{proj}_{MA} \Ad_{n'_j} X, w \rangle |> \epsilon$. Then 
\begin{equation} \label{section5_useful_bound_1} |\langle \mbox{proj}_{MA} \Ad_{ n_j } X, w \rangle |> \epsilon /2 \end{equation} 
and 
\begin{equation} \label{section5_useful_bound_2} |\langle \mbox{proj}_{MA} \Ad_{ n_{j_0}} X, w \rangle |< \epsilon /4 \end{equation} 
since $n_j$ is very close to $n'_j$, and $n_0$ is very close to $n'_0 = e$. Identifying $\tilde U_1$ with $N^+$ as above and quoting Lemma \ref{Brin_pesin_versus_path_maps} we now have 
$$BP_j(h', h) = \Xi^{-1}_{n_0}(h) \Xi_{n_0}(h')  \Xi^{-1}_{n_j}(h')  \Xi_{n_j}(h) $$ 
where $h$ and $h'$ are the $N^+$ coordinates of $u, u'$. Note from this expression that $BP_j$ is smooth and that it has a uniform bound on it's $C^2$ norm that is {\it independent} of $N$ and of the sections $v_j$ which we chose (this is because all the $n_j$'s that arise come from the compact set $S_1$). We specialize to the case 
$$BP_j(h', e) = \Xi^{-1}_{n_0}(e) \Xi_{n_0}(h')  \Xi^{-1}_{n_j}(h')  \Xi_{n_j}(e) $$ 
and take derivatives in the $h'$ direction at $h' = e$. Lemma \ref{comparetoAdjointaction} tells that the derivative is 
$$  \mbox{proj}_{MA} ( \Ad_{ n_0 } -  \Ad_{ n_j }). $$
Applying the bounds \eqref{section5_useful_bound_1} and \eqref{section5_useful_bound_1} above we are finished, at least for $u= z_1$ (that is, for $h = e$). But now we can apply our comment on uniform bounds for the $C^2$ norms of $BP_j$ to conclude that this same inequality extends to some small neighbourhood $u \in U_0\subset \tilde U_1$ of $z_1$ as required.   
\end{proof}

The point of this statement for our purposes is is follows. For any unitary representation $(\mu_b, V_\mu)$ of $AM$, any unit vector $v  \in V_\mu$, and any $u_0 \in U_0$ we want to be sure that at least one $BP_j(u, u_0) \cdot v$ is ``rapidly oscillating''; in this context rapid oscillation should mean that that it is moving at a speed comparable to $||\mu_b||$. We make this precise as follows.

\begin{lemma} \label{actionmoveseveryone}
Let $\mu, V_\mu$ be an isotypic component of $L^2(M)$, and let $v \in V_\mu$ be a unit vector. There is a unit vector $X$ in $\mathrm{T}_eMA$ such that $|X\cdot v| \geq \delta_3 ||\mu_b||$. 
\end{lemma}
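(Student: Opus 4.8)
The plan is to reduce the statement to an elementary fact about unitary representations of the compact group $M$, using the definition of $\|\mu_b\|$ as the worst Lipschitz constant of the orbit maps $m \mapsto \mu_b(ma_t)\cdot v$. Recall that $\|\mu_b\|$ was defined precisely so that there is \emph{some} unit vector $v' \in V_\mu$ and some tangent direction along which the orbit map $MA \to V_\mu$, $ma_t \mapsto \mu_b(ma_t)v'$, moves with speed $\|\mu_b\|$. So the content of this lemma is that \emph{every} unit vector $v$, not just the worst one, admits a tangent direction $X \in \mathrm{T}_e MA$ along which $|X \cdot v| := |\tfrac{d}{ds}\big|_{s=0}\mu_b(\exp(sX))v|$ is comparable to $\|\mu_b\|$. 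First I would observe that the differential of the orbit map at the identity is the linear map $\mathrm{T}_e(MA) \to V_\mu$, $X \mapsto d\mu_b(X)v$, where $d\mu_b$ is the derived Lie algebra representation; the operator norm of this linear map is exactly $\sup_{|X|=1}|d\mu_b(X)v|$, i.e. the quantity we want to bound below by $\delta_3\|\mu_b\|$.

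The key step is the following homogeneity/irreducibility argument. Since $\mu$ is (the $AM$-extension of) an irreducible $M$-representation, the vectors $\{d\mu_b(X)v : X \in \mathfrak{m}\oplus\mathfrak{a}\}$ together with higher brackets generate $V_\mu$ — more concretely, the map $v \mapsto \sup_{|X|=1}|d\mu_b(X)v|$ is a norm-like function on $V_\mu$ (it is a seminorm that is positive on nonzero vectors, because if $d\mu_b(X)v = 0$ for all $X$ then $v$ is fixed by the connected group $MA$, hence — using that $\mu$ is nontrivial and $M$ is connected — $v = 0$). By homogeneity under scaling and the fact that the unit sphere of $V_\mu$ is compact, this seminorm attains a positive minimum $c(\mu) > 0$ on the unit sphere. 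The remaining point is to show $c(\mu) \geq \delta_3 \|\mu_b\|$ with $\delta_3$ independent of $\mu$ and $b$. For this I would use equivariance: the group $M$ acts transitively enough on $V_\mu$ that the seminorm is close to constant on the unit sphere. Precisely, for any two unit vectors $v, v'$ and any $m \in M$, $\sup_{|X|=1}|d\mu_b(X)(\mu(m)v)| = \sup_{|X|=1}|\mu(m)d\mu_b(\mathrm{Ad}(m^{-1})X)v| = \sup_{|X|=1}|d\mu_b(\mathrm{Ad}(m^{-1})X)v|$, which equals the value at $v$ since $\mathrm{Ad}(m)$ is an isometry of $\mathfrak m \oplus \mathfrak a$ (it preserves $\mathfrak a$ pointwise and acts orthogonally on $\mathfrak m$). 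Hence the seminorm is constant on each $M$-orbit; and since the $a_t$-direction contributes $ib$ acting as a scalar of modulus $|b|$, one checks directly that on the sphere the seminorm is bounded below by a fixed fraction of $\|\mu_b\|$ by comparing any $v$ to the extremal vector $v'$ through a chain of $M$-translates and using that the extremal value $\|\mu_b\|$ is itself bounded by the operator norm of $X\mapsto d\mu_b(X)$.

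Concretely I would argue: the quantity $\|\mu_b\|$ equals $\sup_{|v|=1}\sup_{|X|=1}|d\mu_b(X)v|$, the operator norm of the linear map $\Phi: \mathfrak m\oplus\mathfrak a \to \mathrm{End}(V_\mu)$ followed by evaluation, maximized over $v$. The seminorm $v \mapsto \sup_{|X|=1}|d\mu_b(X)v|$ is $M$-invariant by the computation above, so it takes a single value on each $M$-orbit in the unit sphere. If this value were $< \delta_3 \|\mu_b\|$ for some $v$, then because every vector in the unit sphere can be written (after choosing an orthonormal isotypic decomposition inside $V_\mu$, which is $\dim\mu$ copies of the irreducible) as an $M$-translate-plus-$a_t$-phase of a vector that attains the supremum, we would get $\|\mu_b\| < \delta_3\|\mu_b\|$, a contradiction for $\delta_3 < 1$. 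I expect the main obstacle to be the very last sentence: making rigorous and uniform (in $\mu$ and $b$) the claim that the $M$-action, together with the phase from $a_t$, is ``transitive enough'' on the unit sphere of $V_\mu$ to compare an arbitrary $v$ to the extremal vector. The clean way around this is to replace the transitivity claim by the observation that $\|\mu_b\|^2 = \sup_{|v|=1}\langle v, Q v\rangle$ where $Q = \sum_i d\mu_b(X_i)^*d\mu_b(X_i)$ for an orthonormal basis $\{X_i\}$ of $\mathfrak m\oplus\mathfrak a$ is a positive self-adjoint $M$-intertwiner (here one uses $\mathrm{Ad}(M)$-invariance of the basis sum), hence by Schur's lemma $Q$ is scalar on each irreducible summand; therefore $\langle v, Qv\rangle$ is \emph{constant} on the unit sphere of each irreducible summand, and one takes $\delta_3^2$ to absorb the at most $\dim\mu$ summands — but since $\mu$ is irreducible as an $M$-rep the summand is unique and $\delta_3 = 1$ works up to the discrepancy between $\sup_i|d\mu_b(X_i)v|$ and $(\sum_i |d\mu_b(X_i)v|^2)^{1/2}$, which costs only a factor $\sqrt{\dim(\mathfrak m\oplus\mathfrak a)}$. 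This Casimir-type computation sidesteps the transitivity issue entirely and gives a clean uniform $\delta_3 > 0$.
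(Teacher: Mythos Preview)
Your final Casimir/Schur argument is correct and is essentially the paper's approach: the paper also uses that the Casimir $-\sum_i d\mu_b(X_i)^2$ acts as a scalar $c(\mu_b)\sim |\lambda|^2\sim \|\mu_b\|^2$ on $V_\mu$, then extracts a single $X$ by pigeonhole and dividing $|X^2\cdot v|$ by the operator norm of $d\mu_b(X)$; your packaging via $Q=\sum_i d\mu_b(X_i)^*d\mu_b(X_i)$ and the $\sqrt{\dim(\mathfrak m\oplus\mathfrak a)}$ loss between $\sup_i$ and $\sum_i$ is the same computation done a bit more cleanly, without the detour through highest weights. The earlier compactness and transitivity sketches in your proposal do not give a bound uniform in $\mu$ (as you yourself note) and should simply be deleted. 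One small point: the lemma is stated for the isotypic component rather than the irreducible, but this does not affect your argument, since the Casimir of $M$ (plus the scalar $b^2$ from $\mathfrak a$) already acts by the same constant on every irreducible copy, so $Q$ is still scalar there.
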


\begin{proof} We note that $||\mu_b||$ is bounded above by the length of the highest weight vector $\lambda$ associated to $\mu_b$. Conversely the eigenvalue $c(\mu_b)$ of the Casimir operator on $V_{\mu_b}$ is approximately $|\lambda|^2$. It follows (using the fact that each element of $\mathrm{T}_eMA$ acts diagonalizably on $V_\mu$) that some unit tangent vector $X \in \mathrm{T}_eMA$ satisfies
\[ |X^2 \cdot v|  \gg |\lambda|^2 / (\dim M). \] 
But then $|X\cdot v| \gg |\lambda|$ just because the operator norm of $X$ is bounded above by $||\mu_b|| \sim |\lambda|$. 
\end{proof}

\section{Dolgopyat operators in higher dimensions}

We now have all the pieces we need to prove Theorem \ref{transfer_operator_bounds}. The essential point is the construction of so-called Dolgopyat operators, which we will carry out in this Section. This is an intricate technical argument, but is by now well understood by experts, and can be adapted without conceptual challenge to our current setting. Our first step will be to fix various constants whose significance will appear gradually throughout the Section. We will then recall various standard a prior estimates. Finally we will construct our Dolgopyat operators and establish their necessary properties.

\subsection{Constants, notation,  and a priori estimates}

We recall now the constants $c_0, \kappa, \kappa_1$ from \eqref{definec_0kappakappa_1}. For simplicity we assume that $1 < \kappa < 2$ (so that $\kappa - 1 \in (0, 1))$. Let
$$U_0 \subset \tilde U_1, n_0 \in \mathbb{N},\mbox{  and  } \epsilon_0 \in (0, 1)$$ 
be the open set and constants whose existence is guaranteed by Lemma \ref{NLI}. Let 
$$ d_0 > 0 \mbox{ be the constant from Notation \ref{define_d_0}, and assume $0 < d_0 < 1$}. $$
We now choose $n_1 \in\mathbb{N}$ and length $n_1 + 1 $ cylinders $X_1, \ldots, X_{k_0}$ contained  in $U_0$ such that $\Pp^{n_1} X_k = \hat U_k$.  Recall the eigenfunction $h_\delta \in C^1(\tilde U)$ from \eqref{defineh_delta} and set 
\begin{equation} \label{define_A_0_prime} A'_0  > \frac{32(\delta + 1)}{c_0(\kappa - 1)} \max( ||\tau \circ \Pp^{-1}||_{C^1}, ||\theta\circ\Pp^{-1}||_{C^1}, ||h_\delta||_{C^1} , ||\log h_\delta||_{C^1} ),  \end{equation} 
where $||\tau \circ \Pp^{-1}||_{C^1} = \max \lbrace ||\tau_{ij} \circ \sigma^{-1}_{ij}||_{{C^1(\tilde U_j)}} : (i, j) \mbox{ is admissible} \rbrace$ and similarly for $\theta$. We fix
$$ A_0 > 4e^{A'_0},$$

which is clearly greater than $A_0'$. We choose a doubling constant $C_1 > 1$ with the property
$$\nu_k (B_{2\e} (x)) \leq C_1 \nu_k(B_{\e}(x))$$
for all $k$, all $x \in \hat U_k$, and all $\e > 0$ as in Lemma \ref{doubling_lemma}.

\begin{Nota}
For a positive real number $B$ we write $K_B = K_B(\tilde U)$ for the collection of log Lipschitz positive functions on $\tilde U$ satisfying
$$ | \nabla H(u)  | < B H(u) . $$ 
\end{Nota}

We choose $\delta_1$ small enough that the conditions of Lemma \ref{NCPlemma} hold for each cylinder $X_j$ and the constant $\delta_1$.  We let 
\begin{equation} \delta_3 \mbox{ be the constant from Lemma \ref{actionmoveseveryone} }\end{equation}
and 
\begin{eqnarray} E&\geq&\frac{2}{d_0}, \frac{2A_0}{d_0} \label{define_E} \\
 \delta_4 &\leq& \delta_1 \delta_3 \epsilon_0/7\end{eqnarray}

Let $\delta_5 < \frac{\delta_4}{20E}$. Choose
\begin{equation} \label{define_N_0} N_0 \geq \frac{\log 2E}{\log \kappa}, \frac{ \log(16E/\delta_1c_0)}{\log \kappa }, n_0 , \frac{ - \log c_0}{\log \kappa} , \frac{\log 8}{\log \kappa}, -\frac{\log \delta_4 /80Ec_0}{\log \kappa}. \end{equation}
Write $N = N_0 + n_1$ and let $v_0,  \ldots,  v_{j_0}$ be the sections of $\Pp^N$ whose existence is now guaranteed by Lemma \ref{NLI}. We choose $C_3$ large enough that
\begin{equation} \label{define_C_3} \nu_k(B_{100 \epsilon \kappa_1^{n_1} / c_0}) \leq C_3 \nu_k(B_{c_0 \delta_5 \epsilon /2}); \end{equation}
for all $k$, all $x \in \hat U_k$, and all positive $\epsilon$; precisely, choosing $C_3 = C_1^\lambda$ where $\lambda = \log_2(\frac{800\kappa_1^{n_1}}{c_0\delta_5})$ suffices. 
Choose positive constants 
\begin{eqnarray}
 \epsilon_1&\leq& \frac{\log 2}{20(1 + \delta) E},  \frac{1}{160E}, \frac{c_0 \log 2}{200 E \kappa_1^{n_1}}, \frac{\delta_4\min(1, d_0)}{25 ||BP_j||_{C^2}}, \frac{\delta_4}{25A_0^2} \\
 \epsilon_5 &\leq& \delta_1 \delta_3 \epsilon_0\epsilon_1 \label{define_eta} \\
    \eta &\leq& \frac{1}{4k_0}, \frac{c_0 \epsilon_1 \delta_5}{16k_0\kappa_1^{N_0}}, \frac{\delta_4^2 \epsilon^2_1}{8k_0}\\
    a_0 &<& 1, \delta, \frac{1}{2NA_0} \log\left(1 + \frac{\eta e^{-NA_0}}{2C_3}\right) \label{define_a_0} \\
    \epsilon_2 &\leq &\frac{\eta^2 e^{-2NA_0}}{4C_3^2}. \label{define_epsilon_2}
\end{eqnarray}

\subsection{Preparatory lemmas}
We recall now a number of well known lemmas, all of which follow from direct calculation and from expansion/contraction properties of $\Pp$.  Let $BP_j$ be the map appearing in Lemma \ref{NLI} (with respect to our current choices of $v_j$). We include some proof details not so much for novelty, but rather as a model of the arguments that are extensively throughout this section.

\begin{lemma} We have the bound
$$ |\nabla_{u'}BP_j(\cdot, u) | \leq \frac{A_0}{4}.$$
\end{lemma}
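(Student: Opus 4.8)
The plan is to strip away the factors that are constant in $u'$ and reduce matters to a one–dimensional geometric–series estimate. Fix $u$. The map $u' \mapsto BP_j(u',u) = \phi_0^{-1}(u)\,\phi_0(u')\,\phi_j^{-1}(u')\,\phi_j(u)$ differs from $u' \mapsto \phi_0(u')\phi_j^{-1}(u')$ only by left translation by $\phi_0^{-1}(u)$ and right translation by $\phi_j(u)$, both fixed elements of $AM$. Since $M$ centralises $A$ we may identify $AM$ with $A\times M$ and equip it with the product of a translation-invariant metric on $A\cong\mathbb{R}$ (normalised so that the generator $Z$ of $A$, with $a_t=\exp(tZ)$, has unit length) and a bi-invariant metric on the compact group $M$. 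For this metric left and right translations and inversion are isometries, and multiplication $AM\times AM\to AM$ is $1$-Lipschitz in each variable separately (because $\Ad$ acts by isometries). Hence
$$ |\nabla_{u'}BP_j(\cdot,u)| \;=\; |\nabla_{u'}\bigl(\phi_0(\cdot)\,\phi_j^{-1}(\cdot)\bigr)| \;\leq\; |\nabla\phi_0| + |\nabla\phi_j|, $$
and it suffices to bound $|\nabla\phi_l|$ for $l\in\{0,j\}$, uniformly in $N$ and in the chosen section.

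For this I would estimate $|\nabla\phi_l|$ for $l\in\{0,j\}$; writing $v=v_l$ for the corresponding section of $\Pp^N$ on $\tilde U_1$, we have $\phi_l(u') = a_{\tau^{(N)}(v(u'))}\,\theta^{(N)}(v(u'))$, so in the $A\times M$ coordinates $|\nabla\phi_l| \leq |\nabla(\tau^{(N)}\circ v)| + |\nabla(\theta^{(N)}\circ v)|$. I expand the cocycles as $\tau^{(N)}\circ v = \sum_{p=0}^{N-1}\tau\circ(\Pp^p\circ v)$ and, via the Leibniz rule and bi-invariance of the metric on $M$, $|\nabla(\theta^{(N)}\circ v)| \leq \sum_{p=0}^{N-1}|\nabla(\theta\circ(\Pp^p\circ v))|$. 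Each $\Pp^p\circ v$ is a section of $\Pp^{N-p}$, hence factors as some $\Pp^{-1}_{ii'}$ followed by a section of $\Pp^{N-p-1}$; by \eqref{definec_0kappakappa_1} the latter contracts by at most $1/(c_0\kappa^{N-p-1})$, so $|\nabla(\tau\circ(\Pp^p\circ v))| \leq ||\tau\circ\Pp^{-1}||_{C^1}/(c_0\kappa^{N-p-1})$ and similarly for $\theta$. Summing the geometric series and using $\kappa<2$,
$$ |\nabla(\tau^{(N)}\circ v)| \;\leq\; \frac{||\tau\circ\Pp^{-1}||_{C^1}}{c_0}\sum_{m\geq0}\kappa^{-m} \;\leq\; \frac{2\,||\tau\circ\Pp^{-1}||_{C^1}}{c_0(\kappa-1)}, $$
and likewise with $\theta$, whence $|\nabla\phi_l| \leq \frac{4}{c_0(\kappa-1)}\max\bigl(||\tau\circ\Pp^{-1}||_{C^1},\,||\theta\circ\Pp^{-1}||_{C^1}\bigr)$.

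Combining the two displays, $|\nabla_{u'}BP_j(\cdot,u)| \leq \frac{8}{c_0(\kappa-1)}\max(||\tau\circ\Pp^{-1}||_{C^1},||\theta\circ\Pp^{-1}||_{C^1})$. Since $\delta>0$ and the maximum over the four quantities appearing in \eqref{define_A_0_prime} dominates the maximum over the first two, this is $<\frac{32(\delta+1)}{c_0(\kappa-1)}\max(||\tau\circ\Pp^{-1}||_{C^1},||\theta\circ\Pp^{-1}||_{C^1},||h_\delta||_{C^1},||\log h_\delta||_{C^1}) < A'_0$, and $A'_0 < e^{A'_0} < A_0/4$ by the choice $A_0>4e^{A'_0}$. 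This gives the claimed bound, with room to spare.

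I expect the only step needing genuine care to be the first paragraph: the identification of $AM$ with $A\times M$ and the resulting isometry and $1$-Lipschitz properties rely on $A$ and $M$ commuting, and — more importantly — on $M$ being compact, which is exactly what keeps the Leibniz expansion of $\theta^{(N)}$ from accumulating unbounded $\Ad$-factors. Everything afterwards is the routine contraction-of-sections bound \eqref{definec_0kappakappa_1} inserted into a geometric series, and it is this second, mechanical part that the lemma is really meant to model for the rest of the section.
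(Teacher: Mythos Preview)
Your proof is correct and follows essentially the same approach as the paper: both arguments reduce to bounding $|\nabla(\tau^{(N)}\circ v)|$ and $|\nabla(\theta^{(N)}\circ v)|$ by inserting the contraction estimate \eqref{definec_0kappakappa_1} into a geometric series, and then assembling these into a bound on $\nabla BP_j$. You are more explicit than the paper about the metric structure on $AM$ justifying the Leibniz step and about the final constant comparison $A'_0 < e^{A'_0} < A_0/4$, but the substance is the same.
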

\begin{proof} This is a simple consequence of the contraction properties of $\sigma^{-1}$. Let $m\in \mathbb{N}$ and $v$ be any section of $\Pp^m$ (in the sense described just after \eqref{definec_0kappakappa_1}). We may then calculate 
\begin{eqnarray*} | \nabla (\tau^{(m)}\circ v)| \leq \sum_1^m |\nabla \tau (v\circ \Pp^{j-1})| \leq  ||\tau\circ \sigma^{-1}||_{C^1} \sum_0^{m-1}\frac{1}{c_0\kappa^{m-1-j}} \leq\frac{A_0}{32} . \end{eqnarray*} 
The point of this calculation is that the bound does {\it not} depend on $m$. (Notation has been somewhat abused here: we work on $\tilde U$, yet $\sigma$ is defined only on $\hat U \subset \tilde U$; it's OK in this instance, since $\sigma^k$ extends smoothly and naturally to $v(\tilde U)$ whenever $k \leq m$.)

A similar calculation yields $| \nabla (\theta^{(m)}\circ v) | \leq \frac{A_0}{32}$. Putting these two statements together with the definition of $BP_j$ we obtain our result. 
\end{proof}

\begin{remark}
It is useful to note that
\begin{equation}|  \tilde{\mathcal{L}}_{s, \mu} h| \leq \tilde{\mathcal{L}}_{\Re(s)} |h| \end{equation} 
pointwise for any $s \in \mathbb{C}$, any irreducible representation $\mu$, and any continuous function $h \in C(\tilde U, V_\mu)$; this follows by direct calculation. As a consequence one sees the operator norms obey
\begin{equation} || \tilde{\mathcal{L}}_{s, \mu} ||_{L^2(\nu)} \leq ||\tilde{\mathcal{L}}_{\Re(s)}||_{L^2(\nu)}. \label{real_parts_dominate} \end{equation} 
\end{remark}

\begin{lemma}[Lasota-Yorke]  \label{LY_ineq} For any real numbers $a, b$ and any non-trivial isotypic representation $\mu$ of $M$ satisfying $|\delta - a| < 1$ the following hold
\begin{itemize}
\item if $H \in K_B(U)$ for some $B > 0$ then $\tilde{\mathcal{L}}_{a}^m H \in K_{A_0\left( 1 + \frac{B}{\kappa^m}\right)}$ 
for any $m \in \mathbb{N}$, and 
\item if $H \in C^1(\tilde U, \mathbb{R})$ and $h \in C(\tilde U, V_\mu)$ and $B > 0$ satisfy 
$$ |\nabla h(u)| \leq BH(u)$$
then we have
$$|  \nabla (\tilde{\mathcal{L}}_{a + ib, \mu}^mh)(u)  | \leq A_0 \left[ ||\mu_b|| (\tilde{\mathcal{L}}_a^m|h|)(u)  + \frac{B}{\kappa^m} \tilde{\mathcal{L}}_a^mH)(u)\right]  $$
for any $m \in \mathbb{N}$. 
\end{itemize} 
\end{lemma}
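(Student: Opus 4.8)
The plan is to prove both bullets together, starting by writing the iterate as a single sum over inverse branches. For $u$ in a fixed sheet $\tilde U_j$ one has
$$(\tilde{\mathcal{L}}_{a+ib,\mu}^m h)(u)=\sum_{v}e^{-(a+ib)\tau^{(m)}(v(u))}\,\mu\bigl(\theta^{(m)}(v(u))\bigr)^{-1}\bigl(h(v(u))\bigr),$$
the sum running over the sections $v=\Pp^{-1}_{(i_1,\dots,i_m,j)}$ of $\Pp^m$ attached to admissible words ending in $j$, and likewise $(\tilde{\mathcal{L}}_a^m H)(u)=\sum_v e^{-a\tau^{(m)}(v(u))}H(v(u))$; positivity of the latter when $H>0$ is immediate and settles the positivity half of the first bullet.

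Differentiating a single summand by the product rule yields three groups of terms: (i) the derivative of the real exponential $e^{-a\tau^{(m)}(v(u))}$, which contributes a factor $|a|\,|\nabla(\tau^{(m)}\circ v)(u)|$ relative to the summand; (ii) the derivative of the unitary factor $e^{-ib\tau^{(m)}(v(u))}\mu(\theta^{(m)}(v(u)))^{-1}$, whose operator norm is at most $\|\mu_b\|\bigl(|\nabla(\tau^{(m)}\circ v)(u)|+|\nabla(\theta^{(m)}\circ v)(u)|\bigr)$, since both $|b|$ and the Lipschitz norm of $\mu$ on $M$ are dominated by $\|\mu_b\|$; and (iii) the derivative of $h\circ v$ (resp. $H\circ v$), of size at most $|\nabla h(v(u))|\,|\nabla v(u)|$ (resp. $|\nabla H(v(u))|\,|\nabla v(u)|$). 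Unitarity of $\mu$ and $\mu_b$ ensures these factors do not inflate the other terms. One then feeds in the standing estimates: $|\nabla v(u)|<1/(c_0\kappa^m)$ from \eqref{definec_0kappakappa_1}; $|a|\le\delta+1$ from $|\delta-a|<1$; and, exactly as in the preceding lemma and \emph{independently of $m$},
$$|\nabla(\tau^{(m)}\circ v)(u)|,\ |\nabla(\theta^{(m)}\circ v)(u)|\ \le\ \frac{\max\bigl(\|\tau\circ\Pp^{-1}\|_{C^1},\ \|\theta\circ\Pp^{-1}\|_{C^1}\bigr)}{c_0(\kappa-1)},$$
which by the choice of $A_0$ in \eqref{define_A_0_prime} is $<A_0/(32(\delta+1))$; combined with $|a|\le\delta+1$ this bounds the contribution in (i) by $A_0/32$.

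Assembling: for the first bullet, $|\nabla H|<BH$ makes each summand-derivative at most $(\tfrac{A_0}{32}+\tfrac{B}{c_0\kappa^m})e^{-a\tau^{(m)}(v(u))}H(v(u))$, and summing over $v$ (using $1/c_0\le A_0$) gives $|\nabla(\tilde{\mathcal{L}}_a^m H)(u)|\le A_0(1+B/\kappa^m)(\tilde{\mathcal{L}}_a^m H)(u)$, i.e. $\tilde{\mathcal{L}}_a^m H\in K_{A_0(1+B/\kappa^m)}$. For the second bullet, $|\nabla h|\le BH$ makes each summand-derivative at most a fixed multiple of $A_0$ times $e^{-a\tau^{(m)}(v(u))}\bigl(\|\mu_b\|\,|h(v(u))|+\tfrac{B}{\kappa^m}H(v(u))\bigr)$, the numerical multiple being absorbed into $A_0$ via \eqref{define_A_0_prime} together with $\|\mu_b\|\ge d_0$; summing over $v$ then gives the stated bound $A_0\bigl(\|\mu_b\|\,(\tilde{\mathcal{L}}_a^m|h|)(u)+\tfrac{B}{\kappa^m}(\tilde{\mathcal{L}}_a^m H)(u)\bigr)$.

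The computation is routine and there is no conceptual obstacle. The one point that genuinely matters is the $m$-independence of the gradient bounds on the ergodic sums $\tau^{(m)}\circ v$ and $\theta^{(m)}\circ v$ — a geometric-series effect of the contraction rates in \eqref{definec_0kappakappa_1} — since this is what prevents the constant from degrading as $m$ grows, and is the reason one must treat general $m$ directly rather than iterating the $m=1$ case. Beyond that the only care needed is bookkeeping of which numerical constants get absorbed into $A_0$, all of which is arranged by \eqref{define_A_0_prime} and the lower bound $\|\mu_b\|\ge d_0$ of Notation \ref{define_d_0}.
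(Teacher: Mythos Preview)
Your proof is correct and follows the same approach as the paper: differentiate the sum over inverse branches by the product rule, bound the derivatives of the ergodic sums $\tau^{(m)}\circ v$ and $\theta^{(m)}\circ v$ by a geometric series in the contraction rate (this $m$-independent bound is indeed the one nontrivial point), and absorb the resulting numerical constants into $A_0$. The only discrepancy is that the paper's proof treats $\tilde{\mathcal{L}}$ as the \emph{normalized} operator, so each summand carries an extra factor $h_\delta(v(u))/h_\delta(u)$ in the exponent; differentiating this adds a term controlled by $\|\log h_\delta\|_{C^1}$, which is already built into $A_0'$ via \eqref{define_A_0_prime}, so your argument goes through with no change beyond inserting that term.
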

\begin{proof}
Again this is simply a consequence of the product rule and the contraction properties of sections for $\Pp$. We'll sketch the argument for the first part; the second part, we claim, is very similar, though with somewhat more notational pain. We compute, using the product rule,
\begin{eqnarray*} |\nabla  \tilde{\mathcal{L}}_{a}^m H  | &\leq& \sum |\nabla e^{-a\tau^{(N)}(v(u)) - \log h_\delta(v(u)) + \log h_\delta (u) } H(v(u)) |\\ 
&\leq& \sum || -a\tau^{(N)}(v(u)) - \log h_\delta(v(u)) + \log h_\delta (u) ||_{C^1}  \tilde{\mathcal{L}}_{a}^m H  \\
&&+   \sum  e^{-a\tau^{(N)}(v(u)) - \log h_\delta(v(u)) + h_\delta (u) }| \nabla H| (v(u)) | \nabla v|(u)\\
&\leq& A_0  \tilde{\mathcal{L}}_{a}^m H +  \frac{1}{c_0 \kappa^m}  \tilde{\mathcal{L}}_{a}^m |\nabla H| 
 \end{eqnarray*} 
 where the sum is taken over all sections of $\Pp^m$; we've used some of the estimates and comments from the proof of the previous lemma here. Since $H \in K_B$ we can estimate this last in terms of $H$ to get 
 \begin{eqnarray*} |\nabla  \tilde{\mathcal{L}}_{a}^m H  | &\leq&  \left(A_0  + \frac{B}{c_0\kappa^m}\right) ( \tilde{\mathcal{L}}_{a}^m H) (u)\end{eqnarray*}
which is adequate. The proof of the second part simply involves running the same calculation with the slightly more complicated operator $\tilde{\mathcal{L}}_{s, \mu}$. 

\end{proof}

\begin{lemma} \label{triangle_ineq}
Suppose $v, w$ are vectors in some Hilbert space with $|v| \leq |w|$. Suppose that the normalised vectors $\hat v = v / |v|$ and $\hat w = w/|w|$ satisfy $|\hat v - \hat w| > \delta_4\epsilon_1$ then  $| v  + w|\leq (1 - k_0 \eta)|v| + |w|$. 
\end{lemma}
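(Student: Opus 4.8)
The plan is to prove this by a direct Hilbert space computation, expanding the squared norm $|v+w|^2$ and showing that the hypothesis on $|\hat v - \hat w|$ forces the cross term $\langle v, w\rangle$ to be bounded away from its maximum possible value $|v||w|$, which in turn gives the claimed estimate after taking square roots. First I would record the identity $|\hat v - \hat w|^2 = 2 - 2\Re\langle \hat v, \hat w\rangle$, so the assumption $|\hat v - \hat w| > \delta_4\epsilon_1$ is equivalent to $\Re\langle \hat v, \hat w\rangle < 1 - \tfrac12 \delta_4^2\epsilon_1^2$. Multiplying by $|v||w|$ gives $\Re\langle v, w\rangle < |v||w| - \tfrac12 \delta_4^2\epsilon_1^2 |v||w|$.

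Next I would expand $|v+w|^2 = |v|^2 + 2\Re\langle v, w\rangle + |w|^2 \le |v|^2 + 2|v||w| + |w|^2 - \delta_4^2\epsilon_1^2 |v||w| = (|v|+|w|)^2 - \delta_4^2\epsilon_1^2|v||w|$. The goal is to compare this with $((1-k_0\eta)|v| + |w|)^2 = (|v|+|w|)^2 - 2k_0\eta|v|(|v|+|w|) + k_0^2\eta^2|v|^2$. So it suffices to check that $\delta_4^2\epsilon_1^2 |v||w| \ge 2k_0\eta|v|(|v|+|w|)$, i.e. (dividing by $|v|$, which we may assume nonzero, else the statement is trivial) that $\delta_4^2\epsilon_1^2|w| \ge 2k_0\eta(|v|+|w|)$. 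Using $|v|\le |w|$ the right side is at most $4k_0\eta|w|$, so it is enough that $\delta_4^2\epsilon_1^2 \ge 4k_0\eta$; and this holds by the choice $\eta \le \delta_4^2\epsilon_1^2/(8k_0)$ made in the constant-fixing subsection. Hence $|v+w|^2 \le ((1-k_0\eta)|v|+|w|)^2 - k_0^2\eta^2|v|^2 \le ((1-k_0\eta)|v|+|w|)^2$, and taking square roots finishes the proof (noting $1 - k_0\eta > 0$, which follows from $\eta \le 1/(4k_0)$).

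The only genuinely fiddly point is bookkeeping: one must track which choice of constant in the earlier list ($\eta \le \delta_4^2\epsilon_1^2/(8k_0)$ and $\eta \le 1/(4k_0)$) is being invoked, and handle the degenerate cases $v = 0$ (trivial) and $|v| = |w|$ with equality (no issue, the inequalities are not strict in the wrong direction). There is no real obstacle here — this is a routine convexity/cancellation estimate of exactly the kind Dolgopyat-style arguments use repeatedly, and the substance of the lemma is simply that a definite angular separation between $v$ and $w$ yields a definite multiplicative gain in the triangle inequality. I would present it in three or four displayed lines with the constant choices cited inline.
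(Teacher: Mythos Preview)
Your argument is correct and is precisely the standard elementary computation one expects here: expand $|v+w|^2$, use $|\hat v - \hat w|^2 = 2 - 2\Re\langle \hat v,\hat w\rangle$ to bound the cross term, and then compare with $((1-k_0\eta)|v|+|w|)^2$ using the constant choice $\eta \le \delta_4^2\epsilon_1^2/(8k_0)$. The paper itself does not spell this out at all --- it simply declares the lemma elementary and refers to \cite[Lemma 5.2]{OWhj} together with \eqref{define_eta} --- so your write-up supplies exactly the details the paper omits, and there is nothing to compare.
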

\begin{proof}This is elementary; see, for example \cite[Lemma 5.2]{OWhj} and \eqref{define_eta}.  \end{proof}
\begin{lemma} \label{h_isp_small_or_large}
Suppose that that $h \in C^1(\tilde U,V_\mu)$,  $R \in \mathbb{R}$, and $H \in K_{ER}(\tilde U)$ satisfy 
$$ |h| < H \mbox{ and } |\nabla  h(u)| \leq ER\cdot H(u).  $$
Then for any $x \in U_1$, and any section $v$ of $\Pp^{N_0}$ that is defined on $U_1$ we have either
\begin{itemize}
\item $|h \circ v| \leq \frac{3 H\circ v}{4}$  on $ B_{10\epsilon_1/R}(x), $ or
\item $|h \circ v| \geq \frac{H \circ v}{4}$  on $ B_{10\epsilon_1/R}(x). $
\end{itemize}
\end{lemma}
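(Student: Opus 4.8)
The plan is to exploit the two hypotheses---that $h$ and its gradient are both dominated by $H$, and that $H$ itself is log-Lipschitz with constant $ER$---to show that the ratio $|h|/H$ cannot vary too much over a ball of radius $\sim \epsilon_1/R$, once we pull everything back along a contracting section. First I would estimate the oscillation of $|h\circ v|$ on $B_{10\epsilon_1/R}(x)$: for $u_1,u_2$ in this ball, $\big||h(v(u_1))|-|h(v(u_2))|\big| \le |\nabla(h\circ v)|_\infty \cdot |u_1-u_2| \le \|\nabla v\|_\infty \cdot ER \cdot \|H\circ v\|_\infty \cdot \tfrac{20\epsilon_1}{R}$. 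The contraction bound \eqref{definec_0kappakappa_1} gives $\|\nabla v\|_\infty \le \tfrac{1}{c_0\kappa^{N_0}}$, and by the choice of $N_0$ (in particular $N_0 \ge \log 8/\log\kappa$ and $N_0 \ge -\log c_0/\log\kappa$ from \eqref{define_N_0}) together with $\epsilon_1 \le \tfrac{1}{160E}$, this product is at most $\tfrac{1}{8}\|H\circ v\|_\infty$. So $|h\circ v|$ varies by at most $\tfrac18\|H\circ v\|_\infty$ across the ball.

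Next I would control the oscillation of $H\circ v$ itself. Since $H \in K_{ER}$, the function $\log H$ is Lipschitz with constant $ER$, so $\log H$ pulled back along $v$ is Lipschitz with constant $\tfrac{ER}{c_0\kappa^{N_0}}$; again by the constant choices above this is at most $\tfrac{1}{8}$ over a ball of radius $\tfrac{20\epsilon_1}{R}$ (using $\epsilon_1 \le \tfrac{1}{160E}$). Hence for any $u_1,u_2$ in the ball, $H(v(u_1))$ and $H(v(u_2))$ differ by a factor in $[e^{-1/8}, e^{1/8}] \subset [\tfrac{7}{8}, \tfrac{9}{8}]$; in particular $\inf_{B} H\circ v \ge \tfrac{7}{8}\sup_B H\circ v$, so $\|H\circ v\|_\infty$ and the infimum of $H\circ v$ are comparable up to a small multiplicative error.

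Now I dichotomize on the value of $|h\circ v|$ at the center, or more conveniently on whether $|h(v(u))| \le \tfrac12 H(v(u))$ somewhere on the ball. If $|h\circ v| \le \tfrac12 H\circ v$ at some point $u_0$, then at that point $|h(v(u_0))| \le \tfrac12 H(v(u_0)) \le \tfrac12 \cdot \tfrac{9}{8}\inf_B H\circ v$, and since $|h\circ v|$ moves by at most $\tfrac18 \sup_B H\circ v \le \tfrac18 \cdot \tfrac{9}{7}\inf_B H\circ v$, for every $u$ in the ball $|h(v(u))| \le \big(\tfrac{9}{16} + \tfrac{9}{56}\big)\inf_B H\circ v \le \tfrac34 H(v(u))$, giving the first alternative. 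Otherwise $|h\circ v| > \tfrac12 H\circ v$ everywhere on the ball, which certainly yields $|h\circ v| \ge \tfrac14 H\circ v$, the second alternative. I would tidy the numerology so the constants land on $\tfrac34$ and $\tfrac14$ exactly, replacing the crude bounds above with the precise inequalities; this is the only place care is needed and it is entirely mechanical given \eqref{define_N_0} and the bound $\epsilon_1 \le \tfrac{1}{160E}$. The main (and really the only) obstacle is bookkeeping: making sure the accumulated errors from the gradient bound on $h$, the log-Lipschitz bound on $H$, and the comparison between $\sup$ and $\inf$ of $H\circ v$ all sum to something comfortably below $\tfrac14$, so that the $\tfrac12$ threshold at one point propagates to the $\tfrac34$/$\tfrac14$ thresholds on the whole ball. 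No conceptual difficulty arises---this is the standard ``Dolgopyat ball'' estimate---so I would keep the write-up short and point to the constant choices in the previous subsection.
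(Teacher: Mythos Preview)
Your proposal is correct and follows essentially the same route as the paper's proof: both arguments use the log-Lipschitz bound on $H$ together with the gradient hypothesis on $h$ and the contraction estimate $\|\nabla v\|\le (c_0\kappa^{N_0})^{-1}$ to show that $|h\circ v|/(H\circ v)$ cannot vary by more than $\tfrac14$ across the ball. The only cosmetic difference is the threshold for the dichotomy---the paper negates the second alternative directly (so starts from a point where $|h\circ v|\le \tfrac14 H\circ v$), whereas you split at $\tfrac12$; either way the same Lipschitz bookkeeping with the constant choices in \eqref{define_N_0} and $\epsilon_1\le \tfrac{1}{160E}$ closes the argument.
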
 
\begin{proof}
We assume that the second alternative does not hold, in other words that there is a point $y \in B_{10 \epsilon_1/R}(x)$ with $|h \circ v| \leq \frac{H \circ v}{4}$ at $y$. We now want to run a calculation in Lipschitzness and the small size of the ball to conclude that the first alternative does hold. It's useful to make one preliminary calculation first, however.

Since $H \in K_{ER}$ we know that $\log H$ is $ER$-Lipschitz. Thus for any $z_1, z_2 \in B_{10 \epsilon_1/R}(x)$ we have
$$ |\log H\circ v (z_1)  - \log H\circ v(y)| \leq ERd(z_1, z_2) ||\nabla v||_{\infty} \leq \frac{20E\epsilon_1}{c_0 \kappa^{N_0}}\leq \log 2.$$
It follows that 
\begin{eqnarray} |h(v(z))| &\leq& | h(v(y)) | + \int_y^z |\nabla h \circ v|(s) ds \\
&\leq& \frac{H(v(y))}{4} +  ER \int_y^z  H \circ v (s) ds\\
&\leq&  \frac{H(v(y))}{4} +  2ER \cdot  H \circ v (z) d(y, z) \\
&\leq& \left(  \frac{1}{2} +2ER \frac{20 \epsilon_1}{R} \right)H(v(z))\\
&\leq& \frac{3}{4} H(v(z))\end{eqnarray} 
as required. This kind of Lipschitzness argument for $H$ is very helpful. \end{proof}

\subsection{Dolgopyat operators} Fix now
$$a, b \in \mathbb{R}$$
and let $(\mu, V_\mu)$ be a non-trivial isotypic component of $L^2(M)$ considered as an $M$-space. We assume that
\begin{equation} |a - \delta| < a_0.\label{a_less_than_a_0} \end{equation} 
We write 
\begin{equation} \tilde \epsilon := \epsilon_1 /||\mu_b||, \label{define_tilde_epsilon} \end{equation}
this establishes the length scale at which we wish to work, and will be of essential importance throughout this Section. We aim to prove the main spectral bounds for this choice of $a, b, \mu$.

By Vitali covering, we choose finite subsets $\lbrace x^k_r; r = 1 \ldots  r_0 = r_0(k)\rbrace $ of $X_k$ with the properties that 
$$ X_k \subset \cup_1^{r_0} B_{50 \tilde \epsilon} (x^k_r)$$
and that 
\begin{equation} \label{disjointness} B_{10 \tilde \epsilon} (x^k_{r_1}), B_{10 \tilde \epsilon} (x^k_{r_2}) \end{equation}
 are disjoint unless $r_1 = r_2$

Let $h : \tilde U \rightarrow V_\mu$ be a $C^1$ function. 

\begin{lemma}
There is a choice $j = j(k, r)$ of section from $v_0 \ldots v_{j_0}$, and a choice $y^k_r \in \hat U_1 \cap B_{5\tilde \epsilon}(x^k_r)$ such that 
$$ |   (\nabla  BP_j(\cdot, x^k_r) \cdot (y^k_r - x^k_r)) \cdot  w |  > 7\delta_4 \epsilon_1 ; $$
here we write $w$ for the unit vector in direction $\Phi_{N_0}^{-1}(v_0(x^k_r))h(v_0x^k_r)) $.
\end{lemma}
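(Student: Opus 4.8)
The plan is to assemble three facts that are already in place: the lower bound on the infinitesimal $MA$-action (Lemma~\ref{actionmoveseveryone}), the non-local-integrability estimate (Lemma~\ref{NLI}), and the non-concentration of the limit set (Lemma~\ref{NCPlemma}); the explicit Lie-theoretic description of $\nabla BP_j$ from Lemmas~\ref{Brin_pesin_versus_path_maps} and~\ref{comparetoAdjointaction} is what will make the final estimate close. First I would apply Lemma~\ref{actionmoveseveryone} to the unit vector $w\in V_\mu$: this produces a unit vector $X_0\in\mathrm{T}_eMA$ with $|X_0\cdot w|\ge\delta_3||\mu_b||$. Since $x^k_r$ lies in the cylinder $X_k\subset U_0$, I may then feed $X_0$, viewed as a unit tangent vector of $\mathrm{T}_eAM$, into the NLI property: this returns an index $j=j(k,r)\in\{0,\dots,j_0\}$ and a unit tangent vector $z\in\mathrm{T}_{x^k_r}\tilde U_1$ with $\langle X_0,\nabla_{u'}BP_j(\cdot,x^k_r)_{\ast}z\rangle\ge\epsilon_0$, so that the adjoint vector $z':=\nabla_{u'}BP_j(\cdot,x^k_r)^{\ast}X_0$ satisfies $||z'||\ge\epsilon_0>0$. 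Finally I would invoke Lemma~\ref{NCPlemma} for the sub-cylinder $X_k$ at scale $5\tilde\epsilon$ (which lies in $(0,1)$ because $\epsilon_1$ is tiny and $||\mu_b||\ge d_0$), in the direction $z'/||z'||$: this produces $y^k_r\in X_k\subset\hat U_1$ with $y^k_r\in B_{5\tilde\epsilon}(x^k_r)$ and $|\langle y^k_r-x^k_r,z'/||z'||\rangle|\ge 5\delta_1\tilde\epsilon$, hence, writing $\zeta:=y^k_r-x^k_r$,
\[ \bigl|\langle\nabla_{u'}BP_j(\cdot,x^k_r)\zeta,\ X_0\rangle\bigr|=||z'||\cdot\bigl|\langle\zeta,z'/||z'||\rangle\bigr|\ \ge\ 5\delta_1\epsilon_0\tilde\epsilon . \]

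It remains to upgrade this scalar estimate to the $V_\mu$-valued bound $|(\nabla BP_j(\cdot,x^k_r)\zeta)\cdot w|>7\delta_4\epsilon_1$. Setting $V:=\nabla_{u'}BP_j(\cdot,x^k_r)\zeta\in\mathrm{T}_eMA$ and splitting $V=\langle V,X_0\rangle X_0+V^\perp$, the ``main'' contribution $\langle V,X_0\rangle(X_0\cdot w)$ has norm at least $5\delta_1\epsilon_0\tilde\epsilon\cdot\delta_3||\mu_b||=5\delta_1\delta_3\epsilon_0\epsilon_1$ by \eqref{define_tilde_epsilon}, which by the choice $\delta_4\le\delta_1\delta_3\epsilon_0/7$ fixed in the current section comfortably exceeds the target $7\delta_4\epsilon_1$. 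The hard part is that the transverse term $V^\perp\cdot w$ cannot be discarded by a size estimate: the obvious bound $|V^\perp\cdot w|\le||\nabla BP_j||\cdot||\zeta||\cdot||\mu_b||$ is of the same order $\epsilon_1$ as — indeed a priori larger than — the main term, so genuine cancellation between $X_0\cdot w$ and $V^\perp\cdot w$ has to be excluded. This is where I would unwind $\nabla_{u'}BP_j(\cdot,x^k_r)$ using Lemmas~\ref{Brin_pesin_versus_path_maps} and~\ref{comparetoAdjointaction}: up to an error governed by the uniformly bounded, $N$-independent $C^2$-norm of $BP_j$ together with the smallness of $\tilde\epsilon$, this derivative agrees on $\mathrm{T}_eN^+\cong\mathrm{T}_{x^k_r}\tilde U_1$ with $X\mapsto-\operatorname{proj}_{MA}\Ad_{n_j}X=-[Y_j,X]$, where $Y_j=\log n_j\in\mathfrak n^-$.

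Decomposing the bracket $[Y_j,X]$ into its $\mathfrak a$-component (proportional to a Killing pairing of $Y_j$ with $X$) and its $\mathfrak m$-component, and case-splitting according to whether $||\mu_b||$ is dominated by $|b|$ or by the highest weight of $\mu$, one checks that the transverse contribution $V^\perp$ is forced into the $\mathfrak m$-part of $\mathrm{T}_eMA$, whose action on $V_\mu$ is skew-Hermitian; comparing this against the (imaginary, $w$-aligned) contribution of the $\mathfrak a$-direction in the first regime, and against the Casimir/highest-weight estimate underlying Lemma~\ref{actionmoveseveryone} in the second, shows that $V^\perp\cdot w$ can absorb at most a fixed fraction of the main term. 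Tracking the constants fixed at the start of the current section — in particular the smallness of $\epsilon_1$ relative to $||BP_j||_{C^2}$ — then yields the stated inequality. I expect this exclusion-of-cancellation step to be the only real difficulty; the choices of $X_0$, $j$ and $y^k_r$ above are then the routine part, and everything else is bookkeeping with the constants.
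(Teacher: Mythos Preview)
Your plan correctly identifies the three inputs (Lemma~\ref{actionmoveseveryone}, Lemma~\ref{NLI}, Lemma~\ref{NCPlemma}) and chains them in the right order, but the ``exclusion of cancellation'' step, which you yourself flag as the hard part, does not work as written and cannot be repaired along the lines you sketch. Once you have fixed $X_0\in\mathrm{T}_eMA$ with $|X_0\cdot w|\ge\delta_3||\mu_b||$ and produced $V=\nabla BP_j(\cdot,x^k_r)\zeta$ with $|\langle V,X_0\rangle|$ large, there is simply no structural reason why $V^\perp\cdot w$ should not be antiparallel to $\langle V,X_0\rangle X_0\cdot w$. Your claim that $V^\perp$ is ``forced into the $\mathfrak m$-part'' has no basis: $V^\perp$ is the orthogonal complement of $X_0$ in $\mathfrak a\oplus\mathfrak m$, and $X_0$ can sit anywhere there, so $V^\perp$ need not lie in $\mathfrak m$. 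Even if it did, both $\mathfrak a$ (acting as $ib$) and $\mathfrak m$ act by skew-Hermitian operators in the unitary representation, so ``skew-Hermitian versus imaginary $w$-aligned'' gives no separation. The case split on whether $||\mu_b||$ is governed by $|b|$ or by the highest weight does not help either: in each case the transverse contribution is of order $||\nabla BP_j||\cdot|\zeta|\cdot||\mu_b||\asymp\epsilon_1$, the same scale as the main term, and nothing in the Lie theory forces a favourable angle.

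The fix is a one-line change of viewpoint that removes the cancellation issue entirely: dualise before you start. Instead of choosing $X_0$ to witness the norm of $\Xi_1:Z\mapsto Z\cdot w$, choose a unit vector $\tilde w\in V_\mu$ witnessing the norm of the adjoint $\Xi_1^*$, so that $\tilde v:=\Xi_1^*\tilde w\in\mathrm{T}_eMA$ has $||\tilde v||\ge\delta_3||\mu_b||$. Now feed $\tilde v/||\tilde v||$ into Lemma~\ref{NLI} to get $j$ with $\hat{\hat v}:=(\nabla BP_j(\cdot,x^k_r))^*\tilde v$ of norm at least $\epsilon_0\delta_3||\mu_b||$, and apply Lemma~\ref{NCPlemma} in the direction $\hat{\hat v}/||\hat{\hat v}||$ at scale $5\tilde\epsilon$. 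Unwinding the adjoints, what you have bounded from below is exactly $|\langle(\nabla BP_j\zeta)\cdot w,\tilde w\rangle|\ge 5\delta_1\delta_3\epsilon_0\epsilon_1$; since $\tilde w$ is a unit vector this immediately gives $|(\nabla BP_j\zeta)\cdot w|\ge 5\delta_1\delta_3\epsilon_0\epsilon_1>7\delta_4\epsilon_1$ by the choice $\delta_4\le\delta_1\delta_3\epsilon_0/7$. No transverse term ever appears.
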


\begin{proof}
By Lemma \ref{actionmoveseveryone} the operator norm of the map $\Xi_1: \mathrm{T}_eMA \rightarrow V_\mu$ given by $Z \mapsto Z\cdot w$ is at least $\delta_3||\mu_b||$. We may therefore choose a unit vector $\tilde w \in V_\mu$ such that the dual $\tilde v := \Xi_1^* \tilde w \in T_eMA$ has norm at least $\delta_3||\mu_b||$. By Lemma \ref{NLI} we may also choose $j$ such that
$$\Xi_2 : \mathrm{T}_{x^k_r}\tilde U_1 \rightarrow \mathbb{C}, Z \mapsto \langle \nabla BP_j(\cdot, x^k_r)Z, \tilde v \rangle $$
has norm at least $\delta_3 ||\mu_b|| \epsilon_0$. In other words the dual vector
$$\hat{\hat{v}} = (\nabla BP_j(\cdot, x^k_r))^*\tilde v$$
 in $T_{x^k_r} \tilde U$ has norm at least $\delta_3 ||\mu_b|| \epsilon_0$. We now apply Lemma \ref{NCPlemma} to obtain a choice $y^k_r \in X_k \cap B_{5\tilde \epsilon}(x^k_r)$ with 
$$ |  \langle  \hat{\hat{v}},(y^k_r - x^k_r)  \rangle  | \geq 5\delta_1 \delta_3 \epsilon_0 \tilde \epsilon  ||\mu_b||$$
as required. 
\end{proof}

To each $x^k_r$ we associate a partner point $y^k_r$ as in this lemma.  For $x \in \tilde  U$ let $\psi_{x, \epsilon}$ be a bump function on $\tilde U$ taking the value $1$ on $B_{\epsilon / 2} (x)$ and $0$ outside $B_\epsilon(x)$. Without loss of generality we may assume that 
\begin{equation} ||\psi_{x, \epsilon}||_{C^1} < \frac{4}{\epsilon}. \label{bound_psi} \end{equation} 

We will think of an element 
\begin{equation} (p, \ell, r, k)\in \lbrace1, 2 \rbrace \times \lbrace1, 2 \rbrace \times \lbrace1, \ldots, r_0 \rbrace \times \lbrace1,\ldots,  k_0 \rbrace \label{think_of_plrk} \end{equation} 
 as coding choices $x_r^k$ (if $\ell = 1$) or $y^k_r$ (if $\ell = 2)$ and $v_0$ (if $p = 1$) or $v_{j(r, k)}$ (if $p = 2$). Such an element gives a function on $\tilde U$ as follows:
 
\begin{equation} \tilde \psi_{(p, \ell, r, k)} = \chi_{v_{j}(\tilde U_1)} \psi_{w, \delta_5 \tilde \epsilon} \circ \Pp^{N_0} \label{define_tilde_psi} \end{equation}
where $w= x^k_r$ if $\ell = 1$, and $w = y^k_r$ if $\ell = 2$, and $j = 0$ if $p = 1$ and $j = j(k, r)$ if $p = 2$. (Strictly speaking this is somewhat imprecise, as $\Pp$ us defined only on $U$, not on $\tilde U$; the restriction to  $v_j(\tilde U_1)$ allows us to make sense of this in unambiguous fashion.) Unpicking this, we merely mean that $\tilde \psi_{(p, \ell, r, k)}$ is a bump function centered near $v_0(x^k_r), v_0(y^k_r), v_{j(r, k)}(x^k_r)$ or $ v_{j(r, k)}(y^k_r)$ as appropriate. 
 
Given a subset $J \subset \lbrace 1, 2 \rbrace \times \lbrace1, 2 \rbrace \times \lbrace1, \ldots, r_0 \rbrace \times \lbrace1,\ldots,  k_0 \rbrace$ we choose the function 
\begin{equation} \beta_J = 1 - \eta \sum_J  \tilde \psi_{(p, \ell, r, k)}. \label{define_beta_J} \end{equation}
We say that the subset $J$ is full if, for every $(r, k)$ there exists exactly one pair $p, \ell$ with $(p, \ell, r, k) \in J$. We write $\mathcal{F}$ for the collection of full subsets. 

For a full subset $J$ we define the Dolgopyat operator
\begin{equation} \mathcal{M}_{J, a}H := \tilde{\mathcal{L}}^N_{a, 0}(H\beta_J). \label{define_Dolgopyat_operator} \end{equation} 

\subsection{Bounding transfer operators by Dolgopyat operators}
The two points of Dolgopyat operators are that they are contracting and that they dominate their partner transfer operators. The contraction property is comparatively straightforward:
\begin{theorem} \label{Dolgopyat_operators_contract}
If $H \in K_{E||\mu_b||}$ and $J$ is full then 
\begin{itemize}
\item $ \mathcal{M}_{J, a}H  \in K_{E||\mu_b||}$, and 
\item $|| \mathcal{M}_{J, a}H||_{L^2} \leq (1 - \epsilon_2)||H||_{L^2}$. 
\end{itemize} 
\end{theorem}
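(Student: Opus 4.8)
The plan is to follow the standard Dolgopyat scheme, proving the two claims separately and in that order.

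\textbf{The invariance claim} $\mathcal{M}_{J,a}H \in K_{E\|\mu_b\|}$. First I would observe that $\beta_J$ takes values in $[1-\eta, 1]$ and, because each $\tilde\psi_{(p,\ell,r,k)}$ is a bump function at length scale $\delta_5\tilde\epsilon$ pulled back through $\Pp^{N_0}$ (whose inverse sections contract by at least $\kappa^{N_0}$), one has $\|\nabla \beta_J\|_\infty \ll \eta \cdot (\text{number of overlapping bumps}) \cdot \delta_5^{-1}\tilde\epsilon^{-1}\kappa^{-N_0}$; the disjointness property \eqref{disjointness} of the balls $B_{10\tilde\epsilon}(x^k_r)$ (together with the doubling bound, to control how many $k$-indices can interfere after pulling back) keeps the overlap count bounded by $k_0$. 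Using the definition $\tilde\epsilon=\epsilon_1/\|\mu_b\|$ and the smallness of $\eta$ relative to $\epsilon_1\delta_5 c_0\kappa^{-N_0}/k_0$ from the choice of $\eta$, this shows $|\nabla\beta_J|\le c\|\mu_b\|\beta_J$ for a constant $c$ much smaller than what we need. Since $H\in K_{E\|\mu_b\|}$, the product satisfies $H\beta_J \in K_{(E + c)\|\mu_b\|} \subset K_{2E\|\mu_b\|}$, say. Now I apply the Lasota--Yorke inequality (Lemma~\ref{LY_ineq}, first part, legitimate since $|a-\delta|<a_0<1$): $\tilde{\mathcal{L}}^N_a$ maps $K_{B}$ into $K_{A_0(1+B/(c_0\kappa^N))}$. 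With $B = 2E\|\mu_b\|$ and $N = N_0+n_1$ large, the resulting constant is $A_0(1 + 2E\|\mu_b\|/(c_0\kappa^N))$, and using $E \ge 2A_0/d_0$ and $N_0$ chosen large (the conditions \eqref{define_E}, \eqref{define_N_0}), together with $\|\mu_b\|\ge d_0$ for nontrivial $\mu$ (Notation~\ref{define_d_0}), one checks $A_0(1 + 2E\|\mu_b\|/(c_0\kappa^N)) \le E\|\mu_b\|$; this is exactly the kind of bookkeeping those constant choices were engineered for.

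\textbf{The contraction claim} $\|\mathcal{M}_{J,a}H\|_{L^2(\nu)} \le (1-\epsilon_2)\|H\|_{L^2}$. Here the strategy is the usual one: since $\beta_J \le 1$, pointwise $|\mathcal{M}_{J,a}H| = \tilde{\mathcal{L}}^N_a(H\beta_J) \le \tilde{\mathcal{L}}^N_a H$, so away from the support of the bumps there is no loss; I need a genuine gain of a definite proportion of the $L^2$ mass, localized on the cylinders $X_k$. The point is that on each ball $B_{\delta_5\tilde\epsilon}(v_j(\text{point}))$ where a bump lives, $\beta_J \le 1 - \eta$, so $\tilde{\mathcal{L}}^N_a(H\beta_J)$ there is at most $\tilde{\mathcal{L}}^N_a H - \eta \cdot(\text{contribution of that one section})$. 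To turn a pointwise-on-one-summand gain into an $L^2$ gain I invoke Lemma~\ref{h_isp_small_or_large} (the ``small or large'' dichotomy): after the Lasota--Yorke control of $|\nabla h|$ by $E\|\mu_b\|H$, on the relevant ball either $|h\circ v|\le \tfrac34 H\circ v$ (then the summand $e^{-a\tau^{(N)}\circ v}H\circ v$ already dominates $|e^{-a\tau^{(N)}\circ v}h\circ v|$ by a fixed factor, giving room) or $|h\circ v|\ge\tfrac14 H\circ v$ (then $h$ is comparable to $H$ in direction, and I use the NLI/oscillation input, through the partner-point lemma just proved and Lemma~\ref{triangle_ineq}, to get cancellation between the $v_0$-summand and the $v_j$-summand at one of the two points $x^k_r, y^k_r$). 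In the first case the gain comes from shrinking $H$ via $\beta_J$; in the second it comes from the triangle inequality with a genuine angle $\ge\delta_4\epsilon_1$ between the two unit vectors, which is why fullness of $J$ is used — it guarantees that for each $(r,k)$ exactly one of the four choices is activated, so the bookkeeping of which summand gets damped is consistent. Squaring and integrating against $\nu$, using the doubling property of $\nu_k$ to say the balls $B_{50\tilde\epsilon}(x^k_r)$ carry a definite proportion (bounded below in terms of $C_1$, hence $C_3$) of the mass of $X_k$ relative to $B_{c_0\delta_5\tilde\epsilon/2}$, and finally using the lower bound on $\nu(X_k)$ coming from $\Pp^{n_1}X_k = \hat U_k$, one extracts a fixed multiplicative saving, and tracing constants gives precisely the factor $(1-\epsilon_2)$ with $\epsilon_2 \le \eta^2 e^{-2NA_0}/(4C_3^2)$ from \eqref{define_epsilon_2}.

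\textbf{Main obstacle.} The routine parts are the Lasota--Yorke application and the $\beta_J$-gradient estimate; the genuinely delicate step is the $L^2$ accounting in the contraction claim: one must verify that the set where the dichotomy forces a gain (either from damping, or from cancellation between the two sections $v_0$ and $v_{j(k,r)}$) carries a uniformly positive fraction of $\nu$-mass, uniformly over $k$, $r$, $\mu$ and $b$ — and crucially at the correct length scale $\tilde\epsilon = \epsilon_1/\|\mu_b\|$, which shrinks as $\|\mu_b\|\to\infty$. This is where the doubling Lemma~\ref{doubling_lemma} and the non-concentration Lemma~\ref{NCPlemma} must be combined carefully, and where one has to be sure that pulling everything back through $\Pp^{N_0}$ (which both contracts distances and distorts measures in a controlled, $C_3$-bounded way) does not destroy the positive proportion. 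I expect this synthesis — rather than any single estimate — to be the crux, exactly as the introduction warns.
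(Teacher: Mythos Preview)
Your Part~1 (invariance) is fine and matches the paper's argument essentially line for line.

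Your Part~2 (contraction), however, has a genuine conceptual error: you have confused the contraction theorem with the \emph{domination} theorem (Theorem~\ref{Dolgopyat_operators_dominate}). The statement you are proving involves only the positive function $H$ and the real operator $\mathcal{M}_{J,a}$; no vector-valued $h$, no dichotomy Lemma~\ref{h_isp_small_or_large}, no partner points, no triangle-inequality cancellation, and no NLI enter here. All of that machinery belongs to the proof that $|\tilde{\mathcal{L}}^N_{s,\mu}h| \le \mathcal{M}_{J,a}H$, which is a separate (and harder) theorem. Your proposed argument cannot be carried out as written because it repeatedly refers to an object $h$ that does not appear in the hypotheses or conclusion.

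The actual mechanism for contraction is more elementary. First reduce to $a=\delta$, where $\tilde{\mathcal{L}}_\delta 1 = 1$ and $\nu$ is $\tilde{\mathcal{L}}_\delta$-invariant; the general case $|a-\delta|<a_0$ follows at the end by the crude comparison $\mathcal{M}_{J,a}H \le e^{a_0NA_0}\mathcal{M}_{J,\delta}H$. The key step you are missing is Cauchy--Schwarz inside the transfer operator:
\[
(\mathcal{M}_{J,\delta}H)^2 \;\le\; \tilde{\mathcal{L}}^N_\delta(H^2)\cdot \tilde{\mathcal{L}}^N_\delta(\beta_J^2),
\]
which decouples $H$ from $\beta_J$ entirely. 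Since $\tilde{\mathcal{L}}^N_\delta 1 = 1$ and $\beta_J\le 1$, one has $\tilde{\mathcal{L}}^N_\delta(\beta_J^2)\le 1$ everywhere, and fullness of $J$ guarantees that for every $y$ in a neighbourhood $\hat S_k$ of $\Pp^{n_1}(\{x^k_r,y^k_r\})$ at least one pre-image has $\beta_J\le 1-\eta$, whence $\tilde{\mathcal{L}}^N_\delta(\beta_J^2)(y)\le 1-\eta e^{-NA_0}$ there. The remaining work is exactly the doubling/log-Lipschitz argument you describe at the end: show $\int_{\hat S_k}\tilde{\mathcal{L}}^N_\delta(H^2)\,d\nu_k \ge (2C_3)^{-1}\int \tilde{\mathcal{L}}^N_\delta(H^2)\,d\nu_k$ using that $\tilde{\mathcal{L}}^N_\delta(H^2)\in K_{E\|\mu_b\|}$ varies by at most a factor $2$ over the relevant balls. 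Integrate, use $\tilde{\mathcal{L}}_\delta$-invariance of $\nu$, and read off $\epsilon_2$.
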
 
\begin{proof} This goes back at least as far as \cite{Do}. We recount these now-standard arguments for the convenience of the reader. 
For the first part we calculate:
\begin{eqnarray*}
|\nabla \beta_J H| &\leq& |\nabla H| + H |\nabla \beta_J |\\
&\leq& E||\mu_b||H + H \sum_{(p, l, r, k) \in J}|\nabla \tilde \psi_{(p, l, r, k)}|\\
&\leq& \left( E||\mu_b|| + \frac{4k_0\eta }{\delta_5 \tilde \e} \sup |\nabla \Pp^{N_0}| \right) H;
\end{eqnarray*}
the last term requires some explanation: for any $x$ at most $k_0$ quartets $(p, l, r, k) \in J$ have $\supp \tilde \psi_{(p, l, r, k)} \ni x$ by \eqref{disjointness}; each of those satisfy
$$|\nabla \tilde \psi_{(p, l, r, k)}| \leq \frac{4 }{\delta_5 \tilde \e} \sup |\nabla \Pp^{N_0}|$$
using \eqref{bound_psi} and \eqref{define_tilde_psi}; the $\eta$ term then appears from \eqref{define_beta_J}. Applying our choices of constants \eqref{define_eta} we then see that 
$$| \nabla \beta_J H| \leq \left(    E||\mu_b|| + \frac{4k_0\eta \kappa_1^{N_0} }{\delta_5 c_0 \tilde \e} \right) H\leq ||\mu_b||(E+1)H$$
using \eqref{define_tilde_epsilon}. Thus $\beta_J H \in K_{2E||\mu_b||}$ by Lemma \ref{LY_ineq}, and 
$$M_{J, a} H = \tilde{\mathcal{L}}_{a}^{N}(\beta_J H) \in  K_{A_0(1 + 2E||\mu_b||/\kappa^N)} \subset  K_{A_0(1 + ||\mu_b||)} \subset K_{E||\mu_b||}  $$
as required.

The second part is somewhat more involved. We seek to estimate $||\mathcal{M}_{J, a} H||_{L^2(\nu)}$ for $J$ a full subset. We start by considering the case $a = \delta $ and will later use continuity in $a$ to claim that that suffices. 

A direct calculation in Cauchy Schwartz gives
\begin{equation}  (\mathcal{M}_{J, \delta} H )^2 \leq ( \tilde{\mathcal{L}}^N_\delta (H^2))( \tilde{\mathcal{L}}^N_\delta (\beta_J^2)).  \label{CS_says} \end{equation}
We recall that $\tilde{\mathcal{L}}_{\delta} 1 = 1$ and that $\tilde{\mathcal{L}}_\delta$ is linear and sends positive functions to positive functions. Thus $ \tilde{\mathcal{L}}^N_\delta (\beta_J^2) \leq 1$, and we can hope to be done if we can find a large set where it is bounded strictly away from one. That's our first task. 

Let $\tilde S_k$ be the union of the sets
$$ \lbrace x^k_r: v_j(x^k_r) \in J \mbox{ for some } 1 \leq j \leq j_0 \rbrace$$
and
$$  \lbrace y^k_r: v_j(y^k_r) \in J \mbox{ for some } 1 \leq j \leq j_0 \rbrace$$
both understood in the sense of \eqref{think_of_plrk}. Now, $\Pp^{n_1}(X_k) = \hat U_k$. Let $S_k = \Pp^{n_1}(\tilde S_k)$. Consider the neighborhood
\begin{equation} \hat S_k := B_{c_0 \delta_5 \tilde \e \kappa^{n_1}/2}(S_k), \label{define_hat_S_k}\end{equation} 
which we think of as a subset of $\tilde U_k$ For any $y \in \hat S_k$ there is at least one section $\tilde v$ of $\Pp^N$ such that $\beta_J^2(\tilde v(y)) \leq \beta_J(\tilde v(y)) \leq 1 - \eta$;  this uses \eqref{define_hat_S_k}, \eqref{define_tilde_psi}, and \eqref{definec_0kappakappa_1} . Then 
\begin{eqnarray*} 
\tilde{\mathcal{L}}_\delta^{N}\beta^2_J(y) &=& \sum_{\mbox{ sections $v$ of $\Pp^N$}}e^{-\delta\tau^{(N)}(v(y))}\frac{\beta^2_J(v(y)) h_\delta(v(y))}{h_\delta(y)}\\
&\leq& \left( \sum_{\mbox{ sections $v$ of $\Pp^N$}}e^{-\delta\tau^{(N)}(v(y))}\frac{ h_\delta(v(y))}{h_\delta(y)} \right)  - \eta e^{-\delta\tau^{(N)}(\tilde v(y))}\frac{ h_\delta(\tilde v(y))}{h_\delta(y)} \\
&=& 1 - \eta e^{-\delta\tau^{(N)}(\tilde v(y))}\frac{ h_\delta(\tilde v(y))}{h_\delta(y)} \\
&\leq& 1 - \eta e^{-NA_0}
\end{eqnarray*}
 using \eqref{define_A_0_prime}.  At this stage we conclude that
\begin{equation} \tilde{\mathcal{L}}_\delta^{N}\beta^2_J \leq 1 - \eta e^{-NA_0} \mbox{ on } \hat S_k. \label{bound_away_from_one}  \end{equation} 

Our second observation is that 
\begin{equation} \mbox{The support of $\nu_k$ is contained in $ B_{100  \tilde \e \kappa_1^{n_1}/c_0}(S_k)$}; \label{hat_S_k_not_too_small} \end{equation}
This follows simply because $X_k$ is contained in $B_{100 \tilde \e}(\tilde S_k)$ and because of our Lipschitz bounds \eqref{definec_0kappakappa_1} on $\Pp^{n_1}$. It should be taken to mean that the set on which \eqref{bound_away_from_one} holds is not too small.

We write 
$$ \tilde H := \tilde{\mathcal{L}}_\delta^N (H^2)$$
throughout the rest of this proof. 

The last piece of our puzzle is to establish that the integral of $\tilde H$ over $\hat S_k$ is not too small. This relies in an essential way on the doubling property of $\nu_k$, on \eqref{hat_S_k_not_too_small}, and on the regularity of $\tilde H$. 

\noindent \textbf{Claim:} For any $z \in S_k$ we have
\begin{equation}  \sup_{w \in B_{100 \tilde \e \kappa_1^{n_1}/c_0}(z) }\tilde H(w) \leq  2 \inf_{w \in B_{100 \tilde \e \kappa_1^{n_1}/c_0}(z) }\tilde H(w). \label{factor_2} \end{equation}
\noindent \textbf{Proof of claim:} Note that  $H \in K_{E||\mu_b||} \Rightarrow H^2 \in K_{2E||\mu_b||}$, and a calculation using Lemma \ref{LY_ineq} shows $\tilde H \in K_{E||\mu_b||}$; use \eqref{define_N_0} and \eqref{define_E}. We therefore have, for any $x, y \in  B_{100 \tilde \e \kappa_1^{n_1}/c_0}(z) $
\begin{eqnarray*}
\log \tilde H(x) - \log \tilde H(y) &\leq& 200  E||\mu_b|| \tilde \e \kappa_1^{n_1} / c_0\\
&\leq& 200E\epsilon_1 \kappa_1^{n_1} /c_0\\
&\leq& \log 2,
\end{eqnarray*}
which establishes the claim.

We are now ready to start stringing these observations together. 

\noindent \textbf{Claim:} We have the inequality 
\begin{equation}  \int \tilde  H d\nu_k \leq  2 C_3 \int_{\hat S_k} \tilde H\label{bound_int_over_hat_S_k} \end{equation} 

\noindent \textbf{Proof of claim:} Simply calculate:
\begin{eqnarray*}   \int \tilde  H d\nu_k  &\leq& \sum_{z \in S_k  } \int_{B_{100 \tilde \e \kappa_1^{n_1}/c_0}(z)} \tilde H    d\nu_k \mbox{ by \eqref{hat_S_k_not_too_small}} \\
&\leq&  \sum_{z \in S_k  } \int_{B_{100 \tilde \e \kappa_1^{n_1}/c_0}(z)} d\nu_k  \left( \sup_{w \in B_{100 \tilde \e \kappa_1^{n_1}/c_0}(z) }\tilde H(w)\right) \\
&\leq&2 \sum_{z \in S_k  } \int_{B_{100 \tilde \e \kappa_1^{n_1}/c_0}(z)} d\nu_k  \left(  \inf_{w \in B_{100 \tilde \e \kappa_1^{n_1}/c_0}(z) }\tilde H(w)\right) \mbox{ by \eqref{factor_2}} \\
&\leq&2C_3  \sum_{z \in S_k  } \int_{B_{ c_0 \delta_5 \tilde \e \kappa^{n_1}/2}(z)} d\nu_k  \left(  \inf_{w \in B_{100 \tilde \e \kappa_1^{n_1}/c_0}(z) }\tilde H(w)\right) \mbox{ by \eqref{define_C_3}} \\
&\leq&2C_3  \sum_{z \in S_k  } \int_{B_{ c_0 \delta_5 \tilde \e \kappa^{n_1}/2}(z)} \tilde H d\nu_k \\
&\leq& 2C_3  \int_{ \hat S_k} \tilde H d\nu_k \mbox{ by \eqref{define_hat_S_k}}
  \end{eqnarray*} 
  establishing the claim. 
  
 Using this claim we have 
 \begin{eqnarray*} && \int_{\tilde U_k} \tilde{\mathcal{L}}_\delta^N (H^2)d\nu_k - \int_{\tilde U_k} ( \mathcal{M}_{J, \delta} H)^2 d\nu_k  \\
 &\geq&  \int_{\tilde U_k} \tilde{\mathcal{L}}_\delta^N (H^2)  -  ( \tilde{\mathcal{L}}^N_\delta (H^2))( \tilde{\mathcal{L}}^N_\delta (\beta_J^2)) d\nu_k  \mbox{ by \eqref{CS_says}} \\
 &\geq&    \int_{\tilde U_k} \tilde{\mathcal{L}}_\delta^N (H^2) ( 1 -  \tilde{\mathcal{L}}^N_\delta (\beta_J^2)) d\nu_k \\
  &\geq& \int_{\hat U_k} \tilde{\mathcal{L}}_\delta^N (H^2)  d\nu_k \mbox{ since $\beta_J \leq 1$ and $ \tilde{\mathcal{L}}^N_\delta (\beta_J^2)  \leq 1$. } \\ 
 &\geq&  \eta e^{-NA_0}  \int_{\hat S_k} \tilde{\mathcal{L}}_\delta^N (H^2)  d\nu_k \mbox{ by \eqref{bound_away_from_one}}  \\ 
 &\geq& \frac{\eta e^{-NA_0}}{2C_3}  \int_{\tilde U_k} \tilde{\mathcal{L}}_\delta^N (H^2)  d\nu_k  \mbox{ by \eqref{bound_int_over_hat_S_k}}
  \end{eqnarray*} 
  summing over $k$ and using that $\tilde{\mathcal{L}}_\delta$ preserves $\nu$ have 
  \begin{eqnarray*} ||H||_{L^2(\nu)}^2 -  ||\mathcal{M}_{J, \delta} H||_{L^2(\nu)}^2&\geq& \frac{\eta e^{-NA_0}}{2C_3} ||H||_{L^2(\nu)}^2   \end{eqnarray*} 
  Finally by \eqref{a_less_than_a_0} the definitions \eqref{define_Dolgopyat_operator} and \eqref{define_transfer_operator} give  
  \begin{eqnarray}  ||\mathcal{M}_{J, a} H||^2 &\leq& e^{2a_0 N A_0}   ||\mathcal{M}_{J, \delta} H||^2\\
  &\leq&  e^{2a_0  N A_0}\left(1 -  \frac{\eta e^{-NA_0}}{2C_3}\right)  ||H||^2    \\
  &\leq& (1 - \epsilon_2) ||H||^2 \end{eqnarray} 
  by \eqref{define_a_0} and \eqref{define_epsilon_2}
 \end{proof}

The other requirement for Dolgopyat operators is that they dominate the associated transfer operators as follows.  This is more involved again, but there are now well known techniques to complete the task. We outline the main steps below. 

\begin{theorem} \label{Dolgopyat_operators_dominate}
For every $h \in C^1(\tilde U, V)$ and every $H \in K_{E||\mu_b||}$ satisfying $ |h| \leq H $ and 
$$  | \nabla h(u)   | \leq E||\mu_b ||H(u)$$
there is a full $J$ such that 
$$ |\tilde{\mathcal{L}}^N_{s, \mu} h  |\leq \mathcal{M}_{J, a} H $$
and
$$ | \nabla \tilde{\mathcal{L}}^N_{s, \mu} h(u) |\leq  E||\mu_b|| \mathcal{M}_{J, a} H(u).$$
\end{theorem}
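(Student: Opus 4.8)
\medskip

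The plan is to follow Dolgopyat's dichotomy argument, localised to each of the basic cylinders $\hat U_k$ and each Vitali ball $B_{50\tilde\epsilon}(x_r^k)$. Fix $h$ and $H$ as in the hypotheses. For each pair $(r,k)$ we must decide which of the two candidate quartets $(1,1,r,k)$ (centered at $x_r^k$, using section $v_0$), $(2,1,r,k)$, $(1,2,r,k)$, $(2,2,r,k)$ (centered at $x_r^k$ or $y_r^k$, using $v_0$ or $v_{j(r,k)}$) to throw into $J$; doing this consistently produces a full set $J\in\mathcal F$, and then $\beta_J$ and $\mathcal M_{J,a}H$ are determined. The gradient bound is then essentially automatic: once $|\tilde{\mathcal L}^N_{s,\mu}h|\le\mathcal M_{J,a}H$ is known pointwise, the Lasota--Yorke inequality (Lemma \ref{LY_ineq}, second part, with $B=E\|\mu_b\|$) gives $|\nabla\tilde{\mathcal L}^N_{s,\mu}h(u)|\le A_0[\|\mu_b\|\tilde{\mathcal L}^N_a|h|(u)+\frac{E\|\mu_b\|}{\kappa^N}\tilde{\mathcal L}^N_aH(u)]$, and since $|h|\le H\beta_J$ fails only where $\beta_J<1$ — but there, by construction, we have extra room — one absorbs the constants using $\kappa^N$ large (via \eqref{define_N_0}) and $A_0(1+\cdots)\le E\|\mu_b\|$ to land in $K_{E\|\mu_b\|}$, exactly as in the first half of Theorem \ref{Dolgopyat_operators_contract}. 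So the real content is the pointwise domination $|\tilde{\mathcal L}^N_{s,\mu}h|\le\mathcal M_{J,a}H$.

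\medskip

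For that, fix $(r,k)$. Apply Lemma \ref{h_isp_small_or_large} with $R=\|\mu_b\|$ to the section $v_0$ on the ball $B_{10\tilde\epsilon}(x_r^k)$: either (Case A) $|h\circ v_0|\le\tfrac34 H\circ v_0$ on the whole ball, or (Case B) $|h\circ v_0|\ge\tfrac14 H\circ v_0$ there. In Case A we simply put the quartet $(1,1,r,k)$ into $J$ (i.e.\ we ``dampen at $x_r^k$ through $v_0$''); the factor-$\tfrac34$ smallness on $B_{\delta_5\tilde\epsilon}(x_r^k)\subset B_{10\tilde\epsilon}(x_r^k)$ compensates exactly for the $\eta\tilde\psi\le\eta$ weight removed from $\beta_J$ there, since $\eta$ is tiny relative to the $\tfrac14$ gap — this is the routine ``one term of the transfer sum is small'' bookkeeping, using that the other sections contribute at most $\tilde{\mathcal L}^N_aH$. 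In Case B, $h\circ v_0$ has nearly constant \emph{direction} on the ball (again a Lipschitz-of-$\log H$ computation, as in Lemma \ref{h_isp_small_or_large}), so $w$, the unit vector in the direction of $\Phi_{N_0}^{-1}(v_0(x_r^k))h(v_0 x_r^k)$, genuinely records that direction; now we invoke the lemma just preceding the theorem statement to get a section $v_{j(r,k)}$ and a partner point $y_r^k\in\hat U_1\cap B_{5\tilde\epsilon}(x_r^k)$ with $|(\nabla BP_{j}(\cdot,x_r^k)\cdot(y_r^k-x_r^k))\cdot w|>7\delta_4\epsilon_1$. By the definition of $BP_j$ in terms of $\Phi^{(N)}$ and $\theta^{(N)}$, this says the two unit vectors $\widehat{\tilde{\mathcal L}^N_{s,\mu}h\text{-contribution via }v_0}$ and $\widehat{(\cdots)\text{ via }v_{j}}$ at the relevant point are separated by more than $\delta_4\epsilon_1$ (after accounting for the $\epsilon_1/\|\mu_b\|$ length scale $\tilde\epsilon$ and the $C^2$-bound on $BP_j$ to transfer the infinitesimal statement to the finite scale $\delta_5\tilde\epsilon$, using $\epsilon_1\le\delta_4\min(1,d_0)/25\|BP_j\|_{C^2}$ and $\epsilon_1\le\delta_4/25A_0^2$). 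Then Lemma \ref{triangle_ineq} gives $|v+w|\le(1-k_0\eta)|v|+|w|$ for the two contributions, which is exactly the gain one needs: putting the quartet $(2,\ell,r,k)$ into $J$ (dampening the $v_j$-contribution near $y_r^k$, or the appropriate variant) yields $|\tilde{\mathcal L}^N_{s,\mu}h|\le\tilde{\mathcal L}^N_a(H\beta_J)=\mathcal M_{J,a}H$ on the image ball.

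\medskip

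Finally, one checks global consistency: the balls $B_{10\tilde\epsilon}(x_r^k)$ for a fixed $k$ are disjoint by \eqref{disjointness}, so the choices above for different $(r,k)$ do not interfere, and together they specify exactly one $(p,\ell)$ for each $(r,k)$, i.e.\ $J\in\mathcal F$; moreover on any image point $u\in\tilde U_k$ that is \emph{not} covered by any $B_{50\tilde\epsilon}(x_r^k)$ — there are none, since the $B_{50\tilde\epsilon}$'s cover $X_k$ and $\mathrm{supp}\,\nu_k$ sits just outside, handled by the $B_{100\tilde\epsilon\kappa_1^{n_1}/c_0}$ neighbourhood as in \eqref{hat_S_k_not_too_small} — the trivial bound $|\tilde{\mathcal L}^N_{s,\mu}h|\le\tilde{\mathcal L}^N_a|h|\le\tilde{\mathcal L}^N_aH=\mathcal M_{J,a}H$ already holds since $\beta_J=1$ there. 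Assembling the per-$(r,k)$ estimates gives the two displayed inequalities.

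\medskip

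The main obstacle is the Case B argument: one must be careful that the direction vector $w$ attached to $x_r^k$ really controls the oscillation of the $v_0$-summand \emph{throughout} the small ball (not just at its center), which forces the twin Lipschitz estimates — on $\log H$ to keep $|h|$ comparable to $H$, and on the holonomy $\theta^{(N)}$ and return time $\tau^{(N)}$ composed with sections to keep the phase/direction nearly constant — and that the NLI separation, which Lemma \ref{NLI} delivers only as a first-order (derivative) statement, survives integration to the finite scale $\delta_5\tilde\epsilon$; this is precisely why the constants in \eqref{define_N_0}--\eqref{define_epsilon_2} were front-loaded, and the proof is mostly a matter of invoking them in the right order. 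The genuinely new feature compared with Dolgopyat's original (and with Stoyanov) is that $w$ lives in $V_\mu$ rather than in $\mathbb C$, so ``nearly constant direction'' must be read in the unitary $M$-representation and the separation of directions is extracted via Lemma \ref{actionmoveseveryone} — but once the lemma just before the theorem is in hand, this causes no extra difficulty.
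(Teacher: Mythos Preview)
Your plan is correct and follows the same Dolgopyat dichotomy as the paper. The paper packages the two-summand cancellation into an explicit intermediate lemma (Lemma \ref{key_lemma}): it defines ratios $\Delta_1,\Delta_2$ of the $v_0$- and $v_j$-summands over $(1-k_0\eta)$-weighted $H$-terms and shows that at least one $\Delta_i\le 1$ on at least one of $B_{2\delta_5\tilde\epsilon}(x_r^k)$, $B_{2\delta_5\tilde\epsilon}(y_r^k)$; the full set $J$ is then read off from a four-way case split on $(i,\text{ball})$, and the gradient bound is dispatched by Lasota--Yorke exactly as you say.

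There is one small but real omission in your Case B. You apply the small/large dichotomy of Lemma \ref{h_isp_small_or_large} only to the section $v_0$, but the separation argument you sketch needs \emph{both} normalised contributions $w_1=h(v_0(\cdot))/|h(v_0(\cdot))|$ and $w_2=h(v_j(\cdot))/|h(v_j(\cdot))|$ to be slowly varying on the ball --- otherwise you cannot carry the first-order NLI estimate from $x_r^k$ over to $y_r^k$ (the paper's chain of inequalities subtracts both $|w_1(y)-w_1(x)|$ and $|w_2(y)-w_2(x)|$, and each must be $\le\delta_4\epsilon_1/4$). The fix is immediate and is exactly how the paper opens the proof of Lemma \ref{key_lemma}: apply Lemma \ref{h_isp_small_or_large} to $v_j$ as well; if the first alternative holds for either section, dampen that section's contribution at $x_r^k$ and you are done without any oscillation, while if the second alternative holds for both, then both $w_1,w_2$ are $\tfrac{\delta_4\|\mu_b\|}{20}$-Lipschitz and your Case B goes through.
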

\begin{proof}[Sketch proof of Theorem \ref{transfer_operator_bounds}]
The connection of Theorems \ref{Dolgopyat_operators_contract} and \ref{Dolgopyat_operators_dominate} to Theorem \ref{transfer_operator_bounds} is as follows. Suppose we are given a function $h \in C^1(\tilde U, V_\mu)$. For simplicity we consider the case $||h||_{C^1} = 1$. By Theorem \ref{Dolgopyat_operators_dominate} we can inductively choose a sequences of functions $H_n \in C^1(\tilde U, \mathbb{R})$ with
\begin{itemize}
\item{ $H_0 = 1$ is constant}
\item{ for each $k$ we choose $H_{k+1} = \mathcal{M}_{J_k, a} H_k \in K_{E||\mu_b||}$ for some full subset $J_k$. Moreover we also require}
\item  $|\tilde{\mathcal{L}}^{Nk}_{s, \mu} h  |\leq H_k$ and  $| \nabla \tilde{\mathcal{L}}^{Nk}_{s, \mu} h(u) |\leq  E||\mu_b|| H_k(u)$, which is possible by Theorem \ref{Dolgopyat_operators_dominate}.
\end{itemize}
We therefore have that
$$|| \tilde{\mathcal{L}}^{Nk}_{s, \mu} h||_{L^2(\nu)} \leq ||H_k||_{L^2(\nu)}$$
for every $k$, and so (by Theorem \ref{Dolgopyat_operators_contract}) that  
$$|| \nabla \tilde{\mathcal{L}}^{Nk}_{s, \mu} h||_{L^2(\nu)} \leq (1 - \epsilon_2)^k$$
for all $k \in \mathbb{N}$. From here it is relatively straightforward to deduce Theorem \ref{transfer_operator_bounds}; for arbitrary $n  = Nk + r$ with $0 \leq r <N$ we have (using \eqref{real_parts_dominate})
\begin{equation} ||  \tilde{\mathcal{L}}^{n}_{s, \mu} h||_{L^2(\nu)} \leq ||  \tilde{\mathcal{L}}^{Nk}_{s, \mu} h||_{L^2(\nu)}  \cdot ||  \tilde{\mathcal{L}}_{s, \mu} ||^r_{L^2(\nu)} \leq (1 - \epsilon_2)^k(1 +  ||  \tilde{\mathcal{L}}_{\Re(s)} ||_{L^2(\nu)})^N.\end{equation}
At this stage we are done, since $N$ is fixed, and since the operator norm $|| \tilde{\mathcal{L}}_{\Re(s)} ||_{L^2(\nu)}$ is bounded uniformly when $\Re(s)$ is varies over bounded intervals, which we have since we assumed $|a - \delta| < a_0$.
\end{proof}

Exactly as in \cite{OWhj}, Theorem \ref{Dolgopyat_operators_dominate} will follow from the Lemma:
 \begin{lemma} \label{key_lemma}
Define
 $$\Delta_1 := \frac{| e^{-a\tau_{N_0}(v_0(x))} \Phi^{-1}_{N_0}(v_{0}(x))h(v_{0}(x)) +e^{-a\tau_{N_0}(v_j(x))} \Phi^{-1}_{N_0}(v_{j}(x))h(v_{j}(x)) |    }{ ( 1 - k_0 \eta) e^{-a\tau_{N_0}(v_0(x))} H(v_{0}(x)) +e^{-a\tau_{N_0}(v_j(x))} H(v_{j}(x)) } $$
 and 
  $$\Delta_2 := \frac{| e^{-a\tau_{N_0}(v_0(x))} \Phi^{-1}_{N_0}(v_{0}(x))h(v_{0}(x)) +e^{-a\tau_{N_0}(v_j(x))} \Phi^{-1}_{N_0}(v_{j}(x))h(v_{j}(x)) |    }{  e^{-a\tau_{N_0}(v_0(x))} H(v_{0}(x)) +( 1 - k_0 \eta) e^{-a\tau_{N_0}(v_j(x))} H(v_{j}(x)) } $$
 Then at least one of $\Delta_1$, $\Delta_2$ is less than or equal to one on at least one of $B_{2\delta_5\tilde \epsilon}(x^k_r)$,  $B_{2\delta_5\tilde \epsilon}(y^k_r)$
 \end{lemma}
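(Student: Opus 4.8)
The plan is to run Dolgopyat's ``oscillation $\Rightarrow$ cancellation'' dichotomy, now feeding the $M$-twist through the Brin--Pesin apparatus of Section~\ref{nli_section}. Throughout, the lemma is applied under the standing hypotheses of Theorem~\ref{Dolgopyat_operators_dominate}: $h\in C^1(\tilde U,V_\mu)$, $H\in K_{E||\mu_b||}$, $|h|\le H$ and $|\nabla h|\le E||\mu_b||\,H$. Fix $(k,r)$, abbreviate $x=x^k_r$, $y=y^k_r$, $j=j(k,r)$, and for $u\in\tilde U_1$, $i\in\{0,j\}$ set
$$T_i(u):=e^{-a\tau_{N_0}(v_i(u))}\,\Phi^{-1}_{N_0}(v_i(u))\,h(v_i(u))\in V_\mu,\qquad D_i(u):=e^{-a\tau_{N_0}(v_i(u))}\,H(v_i(u))>0,$$
so that $\Delta_1=|T_0+T_j|/((1-k_0\eta)D_0+D_j)$ and $\Delta_2=|T_0+T_j|/(D_0+(1-k_0\eta)D_j)$; since $\mu_b$ acts unitarily, $|T_i(u)|=e^{-a\tau_{N_0}(v_i(u))}|h(v_i(u))|\le D_i(u)$. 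As $y\in B_{5\tilde\epsilon}(x)$ with $\tilde\epsilon=\epsilon_1/||\mu_b||$ and $\delta_5<1$, both $B:=B_{2\delta_5\tilde\epsilon}(x)$ and $B':=B_{2\delta_5\tilde\epsilon}(y)$ lie inside $B_{10\tilde\epsilon}(x)=B_{10\epsilon_1/||\mu_b||}(x)$, so Lemma~\ref{h_isp_small_or_large} (with $R=||\mu_b||$) applies to each of $v_0,v_j$ on that ball and splits the argument cleanly.

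\textbf{The small regime.} If Lemma~\ref{h_isp_small_or_large}(a) holds for $v_0$, i.e.\ $|h\circ v_0|\le\tfrac34 H\circ v_0$ on $B_{10\tilde\epsilon}(x)$, then $|T_0|\le\tfrac34 D_0$ and $|T_j|\le D_j$ on $B$, so $|T_0+T_j|\le\tfrac34 D_0+D_j\le(1-k_0\eta)D_0+D_j$ there by $k_0\eta\le\tfrac14$ from \eqref{define_eta}, i.e.\ $\Delta_1\le1$ on $B$; symmetrically $\Delta_2\le1$ on $B$ if (a) holds for $v_j$. Hence we may assume Lemma~\ref{h_isp_small_or_large}(b) holds for \emph{both}: $|T_i|\ge\tfrac14 D_i$ on $B_{10\tilde\epsilon}(x)\supseteq B\cup B'$, so both summands are genuine.

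\textbf{The oscillatory regime (the crux).} I claim that at one of the centres $u_\star\in\{x,y\}$ one has $|\widehat{T_0}(u)-\widehat{T_j}(u)|>\delta_4\epsilon_1$ throughout the ball around $u_\star$, where $\widehat{\,\cdot\,}$ is normalisation. Granting this, Lemma~\ref{triangle_ineq} applied at each point of that ball with the smaller of $T_0,T_j$ in the role of $v$, together with $|T_i|\le D_i$, gives $|T_0+T_j|\le(1-k_0\eta)D_0+D_j$ or $\le D_0+(1-k_0\eta)D_j$; since $\tau_{N_0}\circ v_i$ and $h\circ v_i$ are slowly varying on the contracted ball of radius $2\delta_5\tilde\epsilon$, the ratio $|T_0|/|T_j|$ is essentially constant there, so which of the two inequalities holds is fixed over the ball, yielding $\Delta_1\le1$ or $\Delta_2\le1$ uniformly on it as required. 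To prove the claim, dichotomise on $|\widehat{T_0}(x)-\widehat{T_j}(x)|$. If it exceeds $\delta_4\epsilon_1$ (with the small margin needed to propagate over $B$ by slow variation) take $u_\star=x$. Otherwise $\widehat{T_j}(x)=w+O(\delta_4\epsilon_1)$, where $w=\widehat{T_0}(x)$ is precisely the unit vector featuring in the Lemma immediately preceding Lemma~\ref{key_lemma}. Writing $\delta\Phi_i:=\Phi_{N_0}(v_i(x))^{-1}\Phi_{N_0}(v_i(y))\in AM$ (close to the identity since $v_i(x),v_i(y)$ are close and $\tau^{(N_0)}\circ v_i,\ \theta^{(N_0)}\circ v_i$ have $C^1$-norms bounded independently of $N_0$, as in the proof of Lemma~\ref{NLI}), unitarity of $\mu_b$ together with $h(v_i(y))=h(v_i(x))+O(\tfrac{E\epsilon_1}{c_0\kappa^{N_0}}|h(v_i(x))|)$ gives $\widehat{T_i}(y)=\mu_b(\delta\Phi_i)^{-1}\widehat{T_i}(x)+(\text{error})$, whence
$$\widehat{T_0}(y)-\widehat{T_j}(y)=\mu_b(\delta\Phi_0)^{-1}\bigl(w-\mu_b(\delta\Phi_0\,\delta\Phi_j^{-1})\,w\bigr)+(\text{error})=\mu_b(\delta\Phi_0)^{-1}\bigl(w-\mu_b(BP_j(y,x))\,w\bigr)+(\text{error}),$$
the algebraic identity $\delta\Phi_0\,\delta\Phi_j^{-1}=BP_j(y,x)$ being immediate from the definition of $BP_j$. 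A first-order Taylor expansion of $BP_j(\cdot,x)$ at $x$, with remainder controlled by the $N$-independent bound on $||BP_j||_{C^2}$, then converts $|w-\mu_b(BP_j(y,x))w|$ into $|(\nabla_{u'}BP_j(\cdot,x)\cdot(y-x))\cdot w|$ up to an error $\le\tfrac12\delta_4\epsilon_1$ (using $|y-x|\le5\tilde\epsilon$, $\tilde\epsilon=\epsilon_1/||\mu_b||$, $||\mu_b||\ge d_0$ and $\epsilon_1\le\delta_4\min(1,d_0)/25||BP_j||_{C^2}$), and the preceding Lemma says this quantity is $>7\delta_4\epsilon_1$. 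Collecting the remaining errors --- the $O(\delta_4\epsilon_1)$ slack in the dichotomy, the $h$-variation term (negligible by the choice of $N_0$ in \eqref{define_N_0}), and the variation over $B'$ (negligible since $\delta_5$ is tiny relative to all earlier constants) --- leaves $|\widehat{T_0}(u)-\widehat{T_j}(u)|>\delta_4\epsilon_1$ on all of $B'$, proving the claim with $u_\star=y$.

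\textbf{Expected main obstacle.} There is no conceptual difficulty: the content is the identity $\delta\Phi_0\,\delta\Phi_j^{-1}=BP_j(y,x)$, which links the transfer-operator twists to the Brin--Pesin map, and the translation of the Lie-algebra lower bound of the Lemma preceding Lemma~\ref{key_lemma} into a genuine angular separation of $\widehat{T_0}$ and $\widehat{T_j}$. The real work --- and the main obstacle --- is the error bookkeeping in the oscillatory regime: one must verify that every approximation (the Taylor remainder of $BP_j$; the slow variation of $\tau_{N_0}\!\circ v_i$, $\Phi_{N_0}\!\circ v_i$, $h\circ v_i$ and $h_\delta$ over the contracted balls; the passage between a near-identity unitary and its infinitesimal generator; and the ambiguity in which of $T_0,T_j$ is smaller) is swallowed by the factor-$7$ slack engineered into the preceding Lemma, which forces one to unwind the precise roles of the constants \eqref{define_tilde_epsilon}, \eqref{define_N_0}, the $\epsilon_1$- and $\epsilon_5$-bounds, and $\delta_5$. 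This is lengthy but routine and parallels \cite[\S5]{OWhj}, which I would follow for the organisation.
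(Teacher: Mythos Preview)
Your proposal is correct and follows essentially the same route as the paper's proof. The split into the ``small'' and ``oscillatory'' regimes via Lemma~\ref{h_isp_small_or_large}, the identification $\delta\Phi_0\,\delta\Phi_j^{-1}=BP_j(y,x)$ as the link between transfer-operator twists and Brin--Pesin moves, the first-order Taylor expansion of $BP_j$ combined with the $7\delta_4\epsilon_1$ lower bound from the preceding lemma, and the final appeal to Lemma~\ref{triangle_ineq} together with a slow-variation argument to fix the choice of $\Delta_1$ versus $\Delta_2$ on the small ball, all match the paper's argument; your packaging via $\delta\Phi_i$ is a mild algebraic reorganisation of the paper's chain of inequalities but nothing more.
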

 This should be read as follows: the numerator represents two summand terms from the definition of the transfer operator. The denominator (almost) represents two summand terms from the definition of the Dolgopyat operator, with the $(1 - \eta)$ standing in for $\beta_J$. The Lemma says that the Dolgopyat operator is larger at least somewhere.

 \begin{proof} If the first alternative of Lemma \ref{h_isp_small_or_large} holds anywhere on $\B_{10 \tilde \epsilon}(x^k_r)$ for either $v_0$ or $v_j$ then we are done; each of the terms on the top is then smaller than it's counterpart on the bottom, and since all the terms in the denominator are positive reals there is no chance of cancellation there complicating matters. So assume that  
 \begin{equation} \mbox{ the second alternative of Lemma \ref{h_isp_small_or_large} holds for both $v_0$, and  $v_1$ on $\B_{10 \tilde \epsilon}(x^k_r)$ }. \label{2nd_alternative_holds} \end{equation} 
 
 We fix some notation: let $j= j(r, k)$, $w_1(x) =\frac{  h(v_0(x))}{| h(v_0(x))|} \in V,  w_2(x)= \frac{  h(v_j(x))}{| h(v_j(x))|}, \phi_1(x) = \Phi^{(N_0)}(v_0(x)), \phi_2(x) = \Phi^{(N_0)}(v_j(x))$.    We should think of $\phi_j$ as rapidly oscillating, and $w_j$ as slowly oscillating. We now make that second point precise.
 
\noindent  \textbf{Claim:} The vector $w_j$ is $\frac{\delta_4 ||\mu_b||}{20}$-Lipschitz for $j = 1, 2$. 

\noindent  This claim is a consequence (and the purpose of) \eqref{2nd_alternative_holds}. We note that 
 \begin{eqnarray} |\nabla w_i(x) | &\leq& \frac{|\nabla h(v_i(x))| \cdot |\nabla v_j |}{|h(v_j(x))|}\\
 & \leq& \frac{E||\mu_b||H(v_j(x))}{|h(v_j(x))|c_0 \kappa^N}\mbox{ by hypotheses of Theorem \ref{Dolgopyat_operators_dominate}}\\
 & \leq& \frac{4E||\mu_b||H(v_j(x))}{H(v_j(x))c_0 \kappa^N} \mbox{ by \eqref{2nd_alternative_holds}}\\
 &\leq& \frac{ 4E||\mu_b||}{c_0 \kappa^N}\\ 
 &\leq& \frac{\delta_4 ||\mu_b||}{20} \mbox{by \eqref{define_N_0}}
  \end{eqnarray}
  which verifies the claim.

Some heuristics: the idea is to show that the two summands in the numerator are a long way from parallel near at least one of $x^k_r, y^k_r$. In other words we want to show that at least one of 
\begin{equation}  | \phi^{-1}_1(x^k_r)w_1(x^k_r) - \phi^{-1}_2(x^k_r) w_2(x^k_r)| \label{small_first_term} \end{equation}
and
\begin{equation}  | \phi^{-1}_1(y^k_r)w_1(y^k_r) - \phi^{-1}_2(y^k_r) w_2(y^k_r)|   \label{small_second_term}\end{equation} 
is reasonably large; we'll then quote Lemma \ref{triangle_ineq}.  More precisely we shall assume that \eqref{small_first_term} is small and use that fact to conclude that \eqref{small_second_term} is large. 

Now try to make the heuristics precise. For the current calculation we shall write $x = x^k_r$ and $y = y^k_r$ to ease notation. Compute, assuming that \eqref{small_first_term} is at most $2\delta_4\epsilon_1$, 
\begin{eqnarray*} 
| \phi^{-1}_1(y)w_1(y) - \phi^{-1}_2(y) w_2(y)|  &\geq& | \phi_2(y)  \phi^{-1}_1(y)w_1(y) - w_2(y)|  \\
&\geq&  | \phi_2(y)  \phi^{-1}_1(y)w_1(x) - w_2(x)| \\  && - |w_1(y) - w_1(x)| - |w_2(y) - w_2(x)| \\
&\geq&  | w_1(x) - \phi_1(y) \phi^{-1}_2(y)  \phi_2(x)  \phi^{-1}_1(x) w_1(x)| \\  && - |w_1(y) - w_1(x)| - |w_2(y) - w_2(x)| \\ &&-  | \phi^{-1}_1(x)w_1(x) - \phi^{-1}_2(x) w_2(x)| \\
&\geq&  | \phi^{-1}_1(x) w_1(x) - BP_j(y, x) \phi^{-1}_1(x) w_1(x)| \\  && - |w_1(y) - w_1(x)| - |w_2(y) - w_2(x)| \\ &&-  | \phi^{-1}_1(x)w_1(x) - \phi^{-1}_2(x) w_2(x)|
\end{eqnarray*}
The claim above on Lipschitzness of $w_j$, together with the fact $d(x, y) \leq 5 \tilde \epsilon_1$, implies $|w_j(x) - w_j(y)| < \delta_4 \epsilon_1 / 4$. Thus
\begin{equation} | \phi^{-1}_1(y)w_1(y) - \phi^{-1}_2(y) w_2(y)|   \geq  | \phi^{-1}_1(x) w_1(x) - BP_j(y, x) \phi^{-1}_1(x) w_1(x)|  - 3\delta_4 \epsilon_1,\end{equation}
since \eqref{small_first_term} was asssumed to be less than $2\delta_4\epsilon_1$.  We write $\tilde w_1 =  \phi^{-1}_1(x^k_r) w_1(x^k_r)$. Our choice of $y^k_r$ was exactly supposed to ensure that the $BP_j$ term is reasonably large. Let
$$Y = \nabla BP_j(\cdot, x^k_r)(y^k_r -x^{k}_r) \in \mathrm{T}_eMA.$$ 
Then 
\begin{eqnarray*}
| \tilde w_1 - BP_j(y^k_r, x^k_r) \tilde w_1| &\geq& | \tilde w_1 - \exp(Y) \tilde w_1|  - | BP_j(y^k_r, x^k_r)\tilde w_1 - \exp(Y)\tilde w_1| \\
 &\geq& | \tilde w_1 - \exp(Y) \tilde w_1|  - ||\mu_b||d(BP_j(y^k_r, x^k_r), \exp(Y))\\
&\geq& |Y \cdot \tilde w_1| -  ||\mu_b||^2|Y|^2 -   ||\mu_b||d(BP_j(y^k_r, x^k_r), \exp(Y))\\
&\geq&  |Y \cdot \tilde w_1| -  ||\mu_b||^2|Y|^2 -   ||\mu_b|| \cdot  ||BP_j||_{C^2} (d(x^k_r, y^k_r))^2\\
&\geq&  7\delta_4\epsilon_1 - \delta_4\epsilon_1 -   \delta_4\epsilon_1\\
&\geq& 5\delta_4 \epsilon_1.
\end{eqnarray*}
We therefore conclude that 
\[  | \phi^{-1}_1(y^k_r)w_1(y^k_r) - \phi^{-1}_2(y^k_r) w_2(y^k_r)| \geq 2\delta_4\epsilon_1 \] 
as expected. At this stage we should be convinced that 
$$\max \lbrace  | \phi^{-1}_1(y^k_r)w_1(y^k_r) - \phi^{-1}_2(y^k_r) w_2(y^k_r)|,  | \phi^{-1}_1(x^k_r)w_1(x^k_r) - \phi^{-1}_2(x^k_r) w_2(x^k_r)|  \rbrace  > 2 \delta_4 \epsilon_1.$$
For simplicity we assume that the first term is the larger, and claim that the other possibility is similar. 

\noindent \textbf{Claim:} Lipschitzness and our choice of small $\delta_5$ imply that 
$$| \phi^{-1}_1(y)w_1(y) - \phi^{-1}_1(y^k_r) w_1(y^k_r)| < \delta_4 \epsilon_1 / 2 $$
and
$$| \phi^{-1}_2(y)w_2(y) - \phi^{-1}_2(y^k_r) w_2(y^k_r)|< \delta_4 \epsilon_1 / 2 $$
on $y\in B_{2\delta_5\tilde \epsilon}(y^k_r)$.

From this claim it is easy to see that 
\[  | \phi^{-1}_1(y)w_1(y) - \phi^{-1}_2(y) w_2(y)| > \delta_4 \epsilon_1 \] 
for each $y\in B_{2\delta_5\tilde \epsilon}(y^k_r)$. At this stage we can apply Lemma \ref{triangle_ineq} to see that: at every $y\in B_{2\delta_5\tilde \epsilon}(y^k_r)$ at least one of $\Delta_1, \Delta_2$ is less than or equal to one (the choice depends on whether the first or second term in the numerator of $\Delta_i$ is larger). At this stage we are very nearly done; our only worry is that the choice of $\Delta_1$ versus $\Delta_2$ might be different for different points $y$. Using the estimate $H \in K_{E||\mu_b||}$ and the small diameter of $B_{2\delta_5\tilde \epsilon}(y^k_r)$ one shows that 
$$ \frac{\sup_{y} \lbrace e^{-a\tau^{(N)}(v_j(y))}H(v_j(y)) \rbrace }{\inf_{y} \lbrace e^{-a\tau^{(N)}(v_j(y))}H(v_j(y)) \rbrace } \leq 2$$
for any $j$ (the $\inf$ and $\sup$ are taken over $y\in B_{2\delta_5\tilde \epsilon}(y^k_r))$. This, together with our choice of constant $\eta$ provides enough wriggle room to absorb the ambiguity, allowing us to fix the choice of $\Delta_1, \Delta_2$ to be that given by $y = y^k_r$.
\end{proof}
\begin{proof}[Proof of Lemma \ref{key_lemma} implies Theorem \ref{Dolgopyat_operators_dominate}] Fix $h, H$ as in the Theorem, and try to construct an appropriate full subset $J \subset \lbrace 1, 2 \rbrace \times \lbrace1, 2 \rbrace \times \lbrace1, \ldots, r_0 \rbrace \times \lbrace1,\ldots,  k_0 \rbrace$. We use Lemma \ref{key_lemma}. For each $r$ and $k$:
\begin{itemize}
\item if $\Delta_1 \leq 1$ on $B_{2\delta_5\tilde \epsilon}(x^k_r)$ include $(1, 1, r, k)$ in $J$;
\item else if $\Delta_1 \leq 1$ on $B_{2\delta_5\tilde \epsilon}(y^k_r)$ include $(1, 1, r, k)$ in $J$;
\item else if $\Delta_2 \leq 1$ on $B_{2\delta_5\tilde \epsilon}(x^k_r)$ include $(1, 1, r, k)$ in $J$;
\item else if $\Delta_2 \leq 1$ on $B_{2\delta_5\tilde \epsilon}(y^k_r)$ include $(1, 1, r, k)$ in $J$.
\end{itemize}
The key lemma says that at least one of these conditions must hold, so the subset $J$ so obtained is full. The first required inequality now follows by unwrapping the condition on $\Delta_i$. The second required condition is a consequence of Lasota Yorke. This completes the sketch proof. 
\end{proof}

\section{Matrix coefficients and the measure of maximal entropy} \label{matrix_coefficient_section}

Exponential mixing for the measure of maximum entropy is a very natural question from a dynamical viewpoint. For some applications, however, it is convenient to convert this into a statement about the decay of matrix coefficients for the $G$-representation $L^2(\Gamma \ba G)$. This translation was understood by Roblin in his thesis \cite{Ro}, and effectivized by Oh and Shah \cite{OS}. For the reader's convenience we describe the necessary arguments here. Our treatment will be very similar to that in \cite{OW}. Throughout the section we assume that

\begin{center} $\G$ is convex cocompact and Zariski dense.\end{center}

\begin{proposition} Suppose that the $A$ action on $(\G \ba G, m^{\BMS})$ is exponentially mixing in the  sense of Theorem \ref{main1}. Then we have exponential decay of matrix coefficients: there exists $\eta > 0$ such that for any compactly supported functions $\phi, \psi \in C^{\infty}(\Omega)$ we have 
\[ \int_{\Gamma \ba G} \phi(g)\psi(ga_t) dg = C(\phi, \psi) e^{\delta(\G) - n + 1} + O(e^{-\eta t}).\] 
The constant $C(\phi, \psi)$ is given explicitly in terms of the Burger-Roblin measures on $\G \ba G$, and the implied constant may be expressed in terms of appropriate $C^k$ norms for $\phi, \psi$ and in terms of their supports.
\end{proposition}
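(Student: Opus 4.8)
The statement is Theorem \ref{main2} with the roles of $\phi$ and $\psi$ relabelled, so the plan is to run Roblin's transverse--intersection argument \cite{Ro} in the effective form of Oh--Shah \cite{OS} (see also \cite{MMO} and the parallel treatment in \cite{OW}), feeding in Theorem \ref{main1} as a black box together with the quasi--product structure of $m^{\BMS}$ already used in Section \ref{transfer_bounds_are_enough_section}. First, a smooth partition of unity reduces to the case where $\phi$ and $\psi$ are supported in small flow boxes $x_0 N^+_r A_r M N^-_r$ with $r$ below the injectivity radius. On such a box, Haar measure is the product of Lebesgue on $N^+$ and on $N^-$, $dt$ on $A$ and $dm$ on $M$, weighted by a smooth Busemann density $e^{(n-1)(\cdot)}$, while $m^{\BMS}$ is the quasi--product of the two Patterson--Sullivan conditionals $\mu^{\mathrm{PS}}$ on $N^\pm$ with $dt\,dm$, weighted by $e^{\delta(\cdot)}$. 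Replacing one of the two $\mu^{\mathrm{PS}}$--factors by Lebesgue converts $m^{\BMS}$ into a Burger--Roblin measure, $m^{\BR}$ (unstable) or $m^{\BR_*}$ (stable); this is the origin of the constant, which comes out as $C(\phi,\psi)=m^{\BMS}(\Gamma\ba G)^{-1}\,m^{\BR_*}(\phi)\,m^{\BR}(\psi)$ (the assignment of stable versus unstable being fixed by the orientation of $a_t$), and the mismatch $(n-1)-\delta$ of the two weights produces the factor $e^{(\delta-n+1)t}$.

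Next, fix a scale $\e\in(0,1)$ and thicken $\phi$ along the strong stable direction $N^-$ and $\psi$ along the strong unstable direction $N^+$ at scale $\e$, convolving with a fixed bump; write $\phi_\e,\psi_\e$ for the results. Hölder regularity of the Patterson--Sullivan density gives $\|\phi-\phi_\e\|_\infty\ll\e^{\alpha}\|\phi\|_{C^1}$ and $\|\psi-\psi_\e\|_\infty\ll\e^{\alpha}\|\psi\|_{C^1}$. Using the two local product structures and the contraction of the strong stable direction under the flow, the transverse--intersection change of variables rewrites
\[
e^{(n-1-\delta)t}\int_{\Gamma\ba G}\phi(g)\,\psi(ga_t)\,dg
=\frac{1}{m^{\BMS}(\Gamma\ba G)}\int_{\Gamma\ba G}\Phi_\e(g)\,\Psi_\e(ga_t)\,dm^{\BMS}(g)+O\!\big(\e^{\alpha}\|\phi\|_{C^1}\|\psi\|_{C^1}\big)+O\!\big(e^{-t}\|\phi\|_{C^1}\|\psi\|_{C^1}\big),
\]
where $\Phi_\e,\Psi_\e$ are explicit $C^k$ functions assembled from $\phi_\e,\psi_\e$ and the Busemann densities, satisfying $\|\Phi_\e\|_{C^k},\|\Psi_\e\|_{C^k}\ll\e^{-k}\big(\|\phi\|_{C^k}+\|\psi\|_{C^k}\big)$, and with $m^{\BMS}(\Phi_\e)\to m^{\BR_*}(\phi)$ and $m^{\BMS}(\Psi_\e)\to m^{\BR}(\psi)$ at a polynomial rate in $\e$ as $\e\to0$.

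Now apply Theorem \ref{main1} to $\Phi_\e,\Psi_\e$: the $m^{\BMS}$--correlation equals $m^{\BMS}(\Phi_\e)\,m^{\BMS}(\Psi_\e)+O\!\big(\e^{-2k}\|\phi\|_{C^k}\|\psi\|_{C^k}e^{-\eta_0 t}\big)$ with $\eta_0$ the mixing exponent of Theorem \ref{main1}, and the product of masses is $m^{\BMS}(\Gamma\ba G)\,C(\phi,\psi)+O\!\big(\e^{\alpha}\|\phi\|_{C^k}\|\psi\|_{C^k}\big)$. Combining,
\[
e^{(n-1-\delta)t}\int_{\Gamma\ba G}\phi(g)\,\psi(ga_t)\,dg=C(\phi,\psi)+O\!\big(\e^{\alpha}\|\phi\|_{C^k}\|\psi\|_{C^k}\big)+O\!\big(\e^{-2k}\|\phi\|_{C^k}\|\psi\|_{C^k}e^{-\eta_0 t}\big)+O\!\big(e^{-t}\|\phi\|_{C^1}\|\psi\|_{C^1}\big).
\]
Choosing $\e=e^{-ct}$ with $0<c<\eta_0/(2k)$ turns every error into a negative power of $e^t$; this is the same device used in the proof of Corollary \ref{main1_coro}. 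One obtains the Proposition with $\eta=\tfrac12\min\{c\alpha,\ \eta_0-2kc,\ 1\}>0$, implied constant a multiple of $\|\phi\|_{C^{k'}}\|\psi\|_{C^{k'}}$ for a suitable $k'$, and a dependence on the supports of $\phi,\psi$ entering through the number of flow boxes in the partition of unity.

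The conceptual weight sits entirely in the transverse--intersection reduction of the second paragraph. Two ingredients are needed there: the local quasi--product structure of $m^{\BMS}$ with Hölder (or $C^1$) control of the Radon--Nikodym cocycle between $m^{\BMS}$ and the product of Patterson--Sullivan conditionals --- exactly the regularity exploited for $\phi_r$ in Section \ref{transfer_bounds_are_enough_section} --- and an effective comparison of Lebesgue with the Patterson--Sullivan conditional along the horospherical fibres, sharp enough that the thickening error is a genuine positive power $\e^{\alpha}$. Granting these, everything else (partition of unity, the change of variables, insertion into Theorem \ref{main1}, and the $\e=e^{-ct}$ optimization) is routine bookkeeping, carried out in full in \cite{OS, MMO, OW}.
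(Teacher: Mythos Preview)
Your proposal is correct and follows the same overall strategy as the paper: effectivize Roblin's transverse--intersection argument \cite{Ro} in the manner of Oh--Shah \cite{OS}, feed in the BMS mixing of Theorem \ref{main1} as a black box, and optimize a thickening scale $\e=e^{-ct}$ to extract an exponential rate. The references you cite are exactly the right ones.

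The paper organizes the argument differently, in two asymmetric stages rather than one symmetric move. First (Proposition \ref{imp1}) it thickens only in the $P=N^-AM$ direction at scale $\e$ and applies BMS mixing to deduce effective equidistribution of the Patterson--Sullivan measure on a single unstable leaf $yN^+$; the $\e=e^{-ct}$ optimization already happens here. Second (Proposition \ref{imp2}) it runs the transverse--intersection comparison on the target function $\Psi$ alone, using a \emph{fixed} scale $R$ (large enough that every relevant unstable piece meets $\Lambda(\G)$) to build an auxiliary $\psi_1$ with $\int\psi_1\,d\mu^{\PS}=\int\psi\,d\mu^{\Leb}$ along $N^+$; no $\e$ appears in this step. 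Finally one integrates over $P$. Your packaging instead thickens both functions simultaneously and writes a single displayed identity converting Haar to BMS correlation. Both routes arrive at the same place, but the paper's staging separates the two conceptually distinct mechanisms (the $\e$--thickening that invokes mixing versus the fixed--scale transverse comparison that converts Lebesgue to PS), which makes the bookkeeping of constants a little cleaner. A small quibble: your line ``H\"older regularity of the Patterson--Sullivan density gives $\|\phi-\phi_\e\|_\infty\ll\e^\alpha\|\phi\|_{C^1}$'' is not quite the right attribution --- that bound is just Lipschitzness of $\phi$; the regularity input actually needed is smoothness of the Busemann cocycle, which controls the Radon--Nikodym factors in the product decompositions.
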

\begin{remark}
It should be noted that the result would certainly be false (at least when $\delta < (n-1)/2$) if we dropped the condition of compact support for $\phi, \psi$. This is because there are functions living on the flares whose decay rate is only $O(e^{(n - 1)/2})$.  This is the primary difficulty in using representation theory to study groups with small critical exponent.
\end{remark} 

\subsection{A comment on H\"older functions} \label{comment_on_holderness_for_haar} We will state and prove our results here only in the case that $\phi, \psi$ are smooth functions. In fact, however, H\"older regularity will be sufficient for the arguments below. This is useful in the deduction of Corollary \ref{main2_coro}. The naive path to Corollary \ref{main2_coro} is to first approximate H\"older functions $\phi$ and $\psi$ by smooth functions $\phi_\e, \psi_\e$, then apply Theorem \ref{main2} and set $\e = e^-{\alpha t}$. Unfortunately it's not so clear that this works; the error term in passing from $\phi$ to $\phi_\e$ is $O(\epsilon^\theta)$, which will likely swamp the main term. Instead one should first deduce Corollary \ref{main1_coro}, then run this entire section for H\"older functions, and so reach Corollary \ref{main2_coro} by a longer but safer path. 

\subsection{A brief excursion into Patterson-Sullivan theory} We start by recalling the required aspects of Patterson-Sullivan theory. A $\G$-invariant conformal density of dimension $\delta$ is a family of probability measures $\lbrace \mu_x : x \in \partial (\mathbb{H}^n)\rbrace $ on $\partial \mathbb{H}^n$ which are mutually absolutely continuous and satisfy
\begin{itemize}
\item $\frac{d\mu_x}{d\mu_y} (\xi) = e^\beta_\xi(x, y)$, and 
\item $\gamma_* \mu_x = \mu_{\gamma x}$. 
\end{itemize}
It is a theorem of Sullivan \cite{Su} that if $\G$ is convex cocompact then there is a unique $\G$-invariant conformal density $\mu^{\PS}$ of critical dimension $\delta = \delta(\G)$. There is also always a $G$-invariant density $m$ of dimension $(n - 1)$ obtained by taking $m_o$ to be the $K$-invariant measure on $\partial (\mathbb{H}^n)$.

We denote the unstable and stable horospherical subgroups for $A$ by $N^+$ and $N^-$ respectively. Recall the visual maps from $G$ to the boundary $\partial(\mathbb{H}^n)$ defined by $g^+:= \lim_{t \rightarrow +\infty} ga_t$ and $g^+:= \lim_{t \rightarrow +\infty} ga_{-t}$. For any $g \in G$ the visual map provides an identification of the horospheres $gN^\pm  $ with the boundary $\partial (\mathbb{H}^n \setminus \lbrace g^\mp\rbrace )$; note the switch from $\pm$ to $\mp$. This allows us to build measures on horospheres as follows: for $\mu$ a conformal density of dimension $\delta_\mu$ set 

\begin{align*} d\tilde \mu_{gN^\pm}(n)&=e^{\delta_\mu \beta_{(gn)^\pm}(o, gn)} d\mu_o(gn^\pm). \end{align*}
 Note that these are not finite measures in general. The most important densities for us will be the measures coming from the Patterson-Sullivan density $d\tilde \mu^{\PS}_{gN^\pm}$ and the measures coming from the $K$ invariant density $d\tilde \mu^{\Leb}_{gN^\pm} := d\tilde m_{gN^\pm}$. Direct calculation shows that $ d\tilde \mu^{\Leb}_{gN^\pm}(n)$ is indeed Lebesgue measure on the appropriate horosphere.

\subsection{Generalized Bowen-Margulis-Sullivan measures on $\G \ba G$}

From measures on horospheres there is a procedure to construct $\G$-invariant measures on $G$. For any $g_0 \in G$ we consider the coordinates
\[ N^-\times N^+ \times A \times M \rightarrow G\] 
given by multiplication of group elements
\[ (n^-, n^+, a, m)\rightarrow g_0n^-n^+am.\]
For a pair $\mu, \nu$ of $\G$-invariant conformal densities of dimensions $\delta_\mu, \delta_\nu$ we can therefore define the generalized BMS measure on $G$, which takes a compactly supported function $\phi$ to
 \begin{equation} \tilde m^{\mu, \nu} (\phi) := \int_{g_0N^-} \int_{nN^+}  \int_{\mathbb{R}} \int_{M} \phi(ha_tm)  dm  dt d\nu_{nN^+} (h) d\mu_{g_0N^-}(n).\end{equation}  
This is independent of choice of $g_0$, and is $\G$ invariant on $G$,  so descends to a measure $m^{\mu, \nu}$ on $\G \ba G$. The measure is $A$ invariant if and only if $\delta_\nu = \delta_\mu$.  The BMS measure can be recast as 
\begin{equation} \tilde m^{\mu, \nu} (\phi) := \int_{g_0N^+} \int_{hN^-}  \int_{\mathbb{R}} \int_{M} \phi(na_tm)  dm  dt d\mu_{hN^-} (n) d\nu_{g_0N^+}(h).\label{BMS_and_PS} \end{equation}

This construction recovers some measures that we already care about: the measure of maximal entropy, for instance, is given by taking $\mu = \nu = \mu^{\PS}$. A choice of $\mu = \nu = m$ leads to the Haar measure on $\G \ba G$. The other two natural measures in this setup are then 
\begin{equation} \label{define_BR} m^{\BR} := m^{\mu^{\PS}, m}\end{equation} 
and
\begin{equation} \label{define_BR_*} m^{BR}_* := m^{m, \mu^{\PS}} \end{equation}
We refer to these as the unstable (respectively stable) Burger-Roblin measures on $\G \ba G$. They are in some sense the only interesting measures on $\G \ba G$ that are invariant and ergodic for the action of $N^+$ (or respectively $N^-$); this fact is due to Roblin \cite{Ro} and described in our current language by \cite[Theorem 2.5]{Hee}. Note that they are always infinite unless $\G$ is a lattice, in which case they coincide with Haar measure on $\G \ba G$.

 \subsection{Patterson-Sullivan densities on horospheres and doubling properties}
 We now return to the delayed proof of Lemma \ref{doubling_lemma}.  This will follow in a straightforward manner from two facts. 

Firstly the measure $\nu_i$ is absolutely continuous (with Radon-Nikodyn derivatives uniformly bounded both above and below) to the restriction of the Patterson-Sullivan measure $\mu^{\PS}_{z_iMN^+}$ on the strong unstable leaf through $z_i$ to $U_i$. This is clear using the facts that
\begin{itemize}
\item the inclusion of $\hat R^\tau$ into $\G \ba G/M$ sends the measure $d\nu \times dt$ of maximal entropy to the BMS measure,
\item the measures $\nu_i$ are obtained by projecting $\nu$ from $\hat R_i$ onto $\hat U_i$
\item the BMS measure can be described in terms of Patterson-Sullivan measures as in \eqref{BMS_and_PS},
\item The Patterson-Sullivan measures satisfy a doubling property. 
\end{itemize}

Secondly the following Lemma holds.
\begin{lemma}
There exists $C > 0$ such that the following holds: for any $i$ and any $x \in \hat U_i$ and any $1 > \e > 0$ we have
$$ \nu_i(B_\e(x) \geq C \mu^{\PS}_{z_iMN^+}(B_\e(x))$$
for $z_i$ the base point of the rectangle $R_i$.
\end{lemma}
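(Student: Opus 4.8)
The plan is to derive the lemma from the absolute–continuity statement recorded just above (that on $U_i$ the measures $\nu_i$ and $\mu^{\PS}_{z_iMN^+}$ have Radon--Nikodym derivative bounded above and below) together with the doubling property of the Patterson--Sullivan density along horospheres. By that first fact it suffices, for each $i$, each $x\in\hat U_i$ and each $\e\in(0,1)$, to produce a cylinder $C\subset U_i$ with $x\in C$, $C\subset B_\e(x)$, and
\[
\mu^{\PS}_{z_iMN^+}(C)\ \ge\ c\,\mu^{\PS}_{z_iMN^+}(B_\e(x))
\]
for a constant $c>0$ independent of $i,x,\e$; the first fact then gives $\nu_i(B_\e(x))\ge\nu_i(C)\ge c'\mu^{\PS}_{z_iMN^+}(C)\ge cc'\mu^{\PS}_{z_iMN^+}(B_\e(x))$, which is the claim.

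To choose $C$ I would use a diameter-adapted stopping rule. Since $x\in\hat U_i$, the nested cylinders $U_i=\cyl(i)=C^{(0)}\supset C^{(1)}\supset\cdots$ through $x$ are unambiguously defined; let $k$ be minimal with $\diam C^{(k)}<\e/2$ and put $C:=C^{(k)}$, so $C\subset U_i$ (its first symbol is $i$) and $C\subset B_{\e/2}(x)\subset B_\e(x)$. The one structural input is the classical bounded-distortion estimate for the inverse branches of the (eventually) expanding map $\Pp$: there is $\Lambda_\star$, independent of the word length $k$ and of the admissible word, with $\sup|\nabla\Pp^{-1}_{(i_1\ldots i_k)}|\le\Lambda_\star\inf|\nabla\Pp^{-1}_{(i_1\ldots i_k)}|$ on $\tilde U_{i_k}$. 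This holds because there are only finitely many real-analytic single-step branches $\Pp^{-1}_{ij}$, each contracting at rate at most $1/\kappa<1$ by \eqref{definec_0kappakappa_1}, so that writing $\log|\nabla\Pp^{-1}_{(i_1\ldots i_k)}|$ as a telescoping sum along the orbit its oscillation over $\tilde U_{i_k}$ is dominated by a convergent geometric series uniformly in $k$. Combined with the one-step lower bounds of \eqref{definec_0kappakappa_1}, bounded distortion implies that refining a cylinder by one symbol shrinks its diameter by a factor bounded below by a uniform $r_\star\in(0,1)$; hence $\diam C\ge r_\star\,\diam C^{(k-1)}\ge r_\star\e/2$ when $k\ge1$, while for $k=0$ one simply has $\diam C=\diam U_i$, a fixed positive quantity.

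It remains to compare $\mu^{\PS}_{z_iMN^+}(C)$ with $\mu^{\PS}_{z_iMN^+}(B_\e(x))$. Write $C=\Pp^{-1}_{(i_1\ldots i_k)}(U_{i_k})$ up to a $\mu^{\PS}$-null boundary set. Each of the finitely many pieces $U_j$ contains, around some interior point lying in the support of $\mu^{\PS}_{z_jMN^+}$, a relative ball of a fixed positive radius; applying $\Pp^{-1}_{(i_1\ldots i_k)}$ and invoking bounded distortion shows that $C$ contains a relative ball $B$, of radius $\ge c_1\,\diam C$, centred at some $q\in C\subset B_\e(x)$ that lies in the support of the relevant leafwise Patterson--Sullivan measure. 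Since $d(x,q)\le\diam C\le\e/2$ we have $B_\e(x)\subset B_{3\e/2}(q)$ (and in the exceptional case $k=0$, $B_\e(x)\subset B_1(x)\subset B_{R_0}(q)$ with $R_0$ fixed, using $\e<1$), so iterating the doubling property of $\mu^{\PS}_{z_iMN^+}$ on the leaf a uniformly bounded number of times gives $\mu^{\PS}_{z_iMN^+}(B_\e(x))\le C_0\,\mu^{\PS}_{z_iMN^+}(B)\le C_0\,\mu^{\PS}_{z_iMN^+}(C)$, which is exactly the inequality needed above.

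The only genuine difficulty here is the mismatch $\kappa\neq\kappa_1$ in \eqref{definec_0kappakappa_1}: combinatorial cylinder depth is not proportional to $\log(1/\diam)$, so the crude derivative bounds alone do not control how cylinders sit inside metric balls. The uniform bounded-distortion estimate used above — equivalently the Gibbs property of the inverse branches, which is really a $C^{1+\alpha}$ regularity statement — is precisely what repairs this, and I would isolate it as a short preliminary lemma. The remaining ingredients are routine: the doubling property of $\mu^{\PS}_{z_iMN^+}$ along the strong unstable leaf is standard for convex cocompact $\G$ (Sullivan's shadow lemma), and the identification of the support of $\mu^{\PS}_{z_iMN^+}$ with the relevant portion of $\Omega\cap W^{su}(z_i)$ is implicit in the construction of the Markov section and has already been used in establishing the first fact above.
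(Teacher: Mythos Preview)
Your argument is correct, and the overall strategy --- locate a cylinder of depth $k$ through $x$ inside $B_\e(x)$ and show its measure is comparable to that of the ball --- matches the paper's. The implementations diverge in how that comparison is made. The paper exploits the conformality of $\mu^{\PS}$ under the $a_t$-action directly: with $t=\tau^{(k)}(x)$ chosen so that the rescaled ball $B_\e(x)a_t$ has fixed size, both $\nu_i(B_\e(x))$ and $\mu^{\PS}_{z_iMN^+}(B_\e(x))$ are shown to be universal constants times $e^{-\delta t}$, the first bounded below via the cylinder (realized as the holonomy image of a full $\hat U_j$) and the second bounded above by compactness of $R$. Your route instead stays at the original scale, invoking bounded distortion of the inverse branches to produce a well-centred ball inside the cylinder and then iterating the doubling property of $\mu^{\PS}$ to compare with $B_\e(x)$. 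The paper's approach is shorter and avoids invoking doubling (which is, after all, the eventual target), while yours is closer to the standard thermodynamic-formalism toolkit and makes explicit the $C^{1+\alpha}$ distortion estimate that underlies both arguments. One small point to tighten: the claim that ``$C$ contains a relative ball $B$ of radius $\ge c_1\diam C$'' needs the Markov property to ensure that $B_r(q)\cap U_i$ actually sits inside $C$ rather than leaking into an adjacent cylinder; this follows because the interiors of distinct depth-$k$ cylinders are disjoint and $q$ can be chosen as the preimage of a fixed interior point of $U_{i_k}$, but it is worth saying.
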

In other words for any $i, x$, and $\e$ the intersection of $B_\e(x)$ with $\hat U_i$ is a reasonably large fraction of $B_\e(x)$ 
\begin{proof}
This will follow simply from the relationship between $\nu$ and Patterson-Sullivan measures. 

For any $k$ and $u\in \hat U_k$ we have a holonomy map $H_{u, k} $ taking $\hat U_k$ into the strong unstable manifold through $u$. Moreover these maps are all Lipschitz with uniform Lipschitz bound $\beta$ independent of $j$ and the choice of $u$.

We choose $\alpha > 0$ to be the size our our Markov partition (see subsection \ref{Markov_sections}) and choose $k$ minimal such that $e^{\tau^{(k)}(x)} \epsilon  > \beta \alpha$. We note in particular that $e^{\tau^{(k)}(x)} \epsilon   < \beta \alpha e^{\alpha}$. 

Choose $j$ such that $\Pp^k(x) \in \hat U_j$. We then have a section $v : \tilde U_j \rightarrow \tilde U_i$ for $\Pp^k$. In fact this section is given exactly as
$$ y \mapsto H_{u, j} (y)\cdot  a_{-\tau^{(k)}(y)}.$$
Now $H_{u, j} (\tilde U_j)$ has diameter at most $\beta \cdot \alpha$, so $Y := H_{u, j} (\tilde U_j) \cdot a_{-\tau^{(k)}(x)} \subset \hat U_i$ has diameter less than $\epsilon$. On the other hand $x \in Y$, so $Y \subset B_\e(x) \cap \hat U_i$. 
Now write $t$ for $\tau^{(k)}(x)$ and bound
\begin{eqnarray*}
\nu_i(B_\e(x)) & \gg&mu^{\PS}_{z_iMN^+}(Y)\\
&=&e^{-\delta t}(y \cdot \mu^{\PS}_{z_ia_t MN^+}(Y\cdot a _t)\\
&\gg& e^{-\delta t}(y \cdot \mu^{\PS}_{z_j MN^+}(\hat U_j)
\end{eqnarray*}
using that invariance properties of Patterson-Sullivan measures on horospheres and continuity of the Busseman function. We therefore have that $\nu_i(B_\e(x))$ is bounded below by some essentially universal constant (independent, particularly of $x, i, \epsilon$) times $e^{-\delta t}$.

On the other hand 
\begin{eqnarray*} \mu^{\PS}_{z_iMN^+}(B_\epsilon(x)) &=& e^{-\delta t} \mu^{\PS}_{z_ia_t MN^+}(B_\epsilon(x)a_t )\\
&\geq& e^{-\delta t} \mu^{\PS}_{z_ia_t MN^+}(B_{ \beta_\alpha e^{\alpha}} (xa_t) )\\
\end{eqnarray*}
Now  the measure
$$mu^{\PS}_{z_ia_t MN^+}(B_{ \beta_\alpha e^{\alpha}} (xa_t) )$$
is bounded independently of $i$ and $xa_t$, since that it is constrained to vary within the compact set $R$ given by the union of our partition elements. Our lemma follows.
\end{proof}
With the Lemma in hand our doubling estimate is now straightforward.
\begin{proof}[Proof of Lemma \ref{doubling_lemma}] Choose $i, x\in \hat U_i$ and $\epsilon > 0$. Then 
$$ \nu_i(B_\epsilon(x)) \geq c \mu^{\PS}_{z_iMN^+}(B_\e(x))$$
by the lemma. We now apply the doubling property for the Patterson-Sullivan measure to obtain 
$$ \nu_i(B_\epsilon(x)) \gg \tilde c \mu^{\PS}_{z_iMN^+}(B_{2\e}(x)) \gg \hat  c \nu_i(B_{2\e}(x)). $$
Since the implied constant does not depend on $i, x$ or $\epsilon$ we're done. 
\end{proof}

\subsection{Exponential mixing and decay of matrix coefficients. }
Set
$$\alpha(g,{\Lambda(\G)}):=\inf\{|s|: (g h_s)^+\in \Lambda(\G)\} +\inf \{|s|: (g n_s)^-\in \Lambda(\G)\} + 1$$
where $s \rightarrow h_s$ is the natural parametrization of $N^+$ by $\mathbb{R}^{n-1}$ and similarly for $n_s$ and $N^-$. 
 For any compact subset $\mathcal Q\subset G$, 
$$\alpha (\mathcal Q,\Lambda(\G)):=\sup_{g \in \mathcal{Q}} \alpha (g, \Lambda(\G))< \infty .$$

The following theorem implies that Theorem \ref{main2} can be deduced from Theorem \ref{main1}. 
Let $\pi: G\to \Gamma \ba G$ be the canonical projection.
\begin{thm}\label{de} Let $\mathcal Q\subset G$ be a compact subset. 
Suppose that there exist constants $r_0 \in \mathbb{N}, c_\G>0$ and $\eta_\G>0$ such that
for any $\Psi,  \Phi\in C^{r_0}(\G\ba G)$ supported on $\pi(\mathcal Q)$,
\begin{multline} \label{bms3}  \int_{\G \ba G} \Psi (ga_t) \Phi (g) dm^{\BMS} =
 \tfrac{m^{\BMS}(\Psi) \cdot m^{\BMS}(\Phi)}{m^{\BMS}(\G\ba G)}  +O( c_\G \cdot ||\Psi||_{C^{r_0}} ||\Phi||_{C^{r_0}}
 \cdot e^{-\eta_\G t})\end{multline}
where the implied constant  depends only on $\mathcal Q$.
Then for any $\Psi,  \Phi\in C^1(\G\ba G)$ supported on $\pi(\mathcal Q)$, as $t\to +\infty$,
\begin{multline}   e^{(n +1-\delta)t} \int_{\G \ba G} \Psi (ga_t) \Phi (g) dm^{\Haar}\\ =
 \tfrac{ m^{\BR}(\Psi) \cdot m^{\BR_*}(\Phi)}{m^{\BMS}(\G\ba G)} +O( c_\G \cdot ||\Psi||_{C^{r_0}} ||\Phi||_{C^{r_0}}
 \cdot e^{-\eta_\G' t}) \end{multline}
where $\eta_\G' > 0$ depends on $\eta_\G$ and  the implied constant  depends only on $\mathcal Q$ and $\alpha (\mathcal Q,\Lambda(\G))$.
\end{thm}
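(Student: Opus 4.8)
The plan is to run the transverse--intersection argument of Roblin \cite{Ro}, in the effective form due to Oh--Shah \cite{OS} and carried out in essentially this setting in \cite{OW}; I only lay out the skeleton. First reduce, via a partition of unity, to the case where $\Psi$ and $\Phi$ are supported in a single flow box $\pi(g_0 B_r)$ with $B_r = N^-_r N^+_r A_r M_r$ and $r$ a small constant depending only on the injectivity radius near $\mathcal Q$; the number of boxes, and all the measure-theoretic quantities that enter, are controlled by $\mathcal Q$ and by $\alpha(\mathcal Q,\Lambda(\G))$. On such a box all the measures of Section \ref{matrix_coefficient_section} have a product description in the $N^- N^+ A M$ coordinates: $m^{\Haar}$ is $d\mu^{\Leb}_{N^-}\,d\mu^{\Leb}_{N^+}\,dt\,dm$ up to a smooth density that is $1+O(r)$ on the box; $m^{\BMS}$ replaces the two Lebesgue factors by the Patterson--Sullivan densities $d\tilde\mu^{\PS}$; and $m^{\BR} = m^{\mu^{\PS},m}$ (resp.\ $m^{\BR_*} = m^{m,\mu^{\PS}}$) replaces only the $N^+$ (resp.\ $N^-$) Lebesgue factor by the Patterson--Sullivan one. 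Since $a_t$ dilates the $N^+$-directions by $e^t$ and contracts the $N^-$-directions by $e^{-t}$, Lebesgue measure on an unstable horosphere scales by $e^{(n-1)t}$ whereas the Patterson--Sullivan density scales by $e^{\delta t}$; this mismatch is the source of the normalizing factor $e^{((n-1)-\delta)t}$ (the reciprocal of the $e^{(\delta-n+1)t}$ asymptotics of Theorem \ref{main2}).

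Fix a scale $\epsilon\in(0,r)$, to be optimized at the end, and introduce the unstable thickening of $\Psi$ and the stable thickening of $\Phi$,
\[
\Psi^+_\epsilon(g) = \frac{1}{\mu^{\Leb}_{N^+}(N^+_\epsilon)}\int_{N^+_\epsilon}\Psi(gh)\,d\mu^{\Leb}_{N^+}(h),\qquad
\Phi^-_\epsilon(g) = \frac{1}{\mu^{\Leb}_{N^-}(N^-_\epsilon)}\int_{N^-_\epsilon}\Phi(gh)\,d\mu^{\Leb}_{N^-}(h),
\]
which satisfy $\Psi^+_\epsilon = \Psi + O(\epsilon\|\Psi\|_{C^1})$, $\Phi^-_\epsilon = \Phi + O(\epsilon\|\Phi\|_{C^1})$ uniformly and $\|\Psi^+_\epsilon\|_{C^{r_0}}\ll\epsilon^{-r_0}\|\Psi\|_{C^{r_0}}$ (differentiating an average over an $\epsilon$-ball costs at most $\epsilon^{-r_0}$, and only in the horospherical directions). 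The central computation is to rewrite the renormalized Haar correlation as a Bowen--Margulis--Sullivan correlation: using the product description and the change of variables given by right translation by $a_t$ — under which the $\epsilon$-thick unstable horosphere attached to $\Psi$ becomes an $e^t\epsilon$-thick one, and so spreads $\Psi$ over a long unstable leaf — one shows
\[
e^{((n-1)-\delta)t}\int_{\G\ba G}\Psi(ga_t)\Phi(g)\,dm^{\Haar}(g) = \int_{\G\ba G}\widehat\Psi(ga_t)\,\widehat\Phi(g)\,dm^{\BMS}(g) + \mathrm{Err},
\]
where $\widehat\Psi$ is the unstable thickening of $\Psi$ reweighted by the Patterson--Sullivan mass of the relevant $N^+_\epsilon$-pieces (so that, crucially, $m^{\BMS}(\widehat\Psi) = m^{\BR}(\Psi) + O(\epsilon\|\Psi\|_{C^1})$ — here the doubling of the Patterson--Sullivan density recalled in this section is exactly what controls the reweighting), and $\widehat\Phi$ is the analogous stable reweighting of $\Phi$ with $m^{\BMS}(\widehat\Phi) = m^{\BR_*}(\Phi) + O(\epsilon\|\Phi\|_{C^1})$. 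The conversion of Lebesgue into Patterson--Sullivan measure on the expanded horospheres is exact up to Busemann weights, which are $1+O(\epsilon)$ on a box of size $\epsilon$; the remainder $\mathrm{Err}$ collects $O(\epsilon)\|\Psi\|_{C^1}\|\Phi\|_{C^1}$ from the thickenings and weights, plus $O(e^{-\kappa t})\|\Psi\|_{C^1}\|\Phi\|_\infty$ for some $\kappa>0$ coming from the $N^-$-contraction and from the boundary of the box. Compact support together with $\alpha(\mathcal Q,\Lambda(\G))<\infty$, which furnishes uniform lower bounds on the Patterson--Sullivan mass of $\epsilon$-balls about limit points in the shadow of $\mathcal Q$, is what makes both the reweighting and these bounds legitimate, and is where the dependence of the implied constant on $\alpha(\mathcal Q,\Lambda(\G))$ enters.

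Finally, feed $\widehat\Psi,\widehat\Phi$ into hypothesis \eqref{bms3}. Since $\|\widehat\Psi\|_{C^{r_0}}\ll\epsilon^{-r_0}\|\Psi\|_{C^{r_0}}$ and similarly for $\widehat\Phi$, exponential mixing gives
\[
\int_{\G\ba G}\widehat\Psi(ga_t)\widehat\Phi(g)\,dm^{\BMS} = \frac{m^{\BMS}(\widehat\Psi)\,m^{\BMS}(\widehat\Phi)}{m^{\BMS}(\G\ba G)} + O\!\left(c_\G\,\epsilon^{-2r_0}\|\Psi\|_{C^{r_0}}\|\Phi\|_{C^{r_0}}e^{-\eta_\G t}\right).
\]
Inserting $m^{\BMS}(\widehat\Psi) = m^{\BR}(\Psi)+O(\epsilon)$ and $m^{\BMS}(\widehat\Phi) = m^{\BR_*}(\Phi)+O(\epsilon)$ and combining with the previous paragraph, the renormalized Haar correlation equals $\frac{m^{\BR}(\Psi)\,m^{\BR_*}(\Phi)}{m^{\BMS}(\G\ba G)}$ up to a total error $O\big((\epsilon + e^{-\kappa t} + c_\G\,\epsilon^{-2r_0}e^{-\eta_\G t})\|\Psi\|_{C^{r_0}}\|\Phi\|_{C^{r_0}}\big)$. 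Taking $\epsilon = e^{-\eta_\G t/(2r_0+1)}$ balances the two dominant terms and produces the claimed bound with $\eta_\G' = \min\big(\kappa,\,\eta_\G/(2r_0+1)\big)$. \textbf{The main obstacle} is the middle step: showing that, after renormalization, the Haar correlation genuinely becomes a BMS correlation of explicitly identified (re)thickened functions with all errors uniformly $O(\epsilon)+O(e^{-\kappa t})$ — in particular that the Lebesgue-versus-Patterson--Sullivan conversion on the expanding horospheres is controlled purely by the Busemann weights and the doubling of the Patterson--Sullivan density, and that one thickens on the correct side (unstable for $\Psi$, stable for $\Phi$) so that flowing by $a_t$ sharpens rather than destroys the approximation; the non-compactness of $\G\ba G$ is handled, as always, by the compact-support hypothesis and the finiteness of $\alpha(\mathcal Q,\Lambda(\G))$.
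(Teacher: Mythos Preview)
Your proposal is correct and is a repackaging of the same Roblin/Oh--Shah transverse--intersection argument the paper uses, with the same $\epsilon$-optimization at the end. The only organizational difference is that the paper factors the argument through two intermediate horospherical statements --- first deducing equidistribution of the Patterson--Sullivan measure on unstable leaves from the BMS mixing hypothesis (Proposition~\ref{imp1}), then upgrading this to Lebesgue equidistribution on leaves via the transverse--intersection construction $\psi\mapsto\psi_1$ (Proposition~\ref{imp2}), and finally integrating over $P=N^-AM$ --- whereas you do the Lebesgue-to-PS conversion symmetrically on both $\Psi$ and $\Phi$ in a single step and apply BMS mixing once. Your $\widehat\Psi$ is exactly the paper's $\psi_1$, with the defining property $m^{\BMS}(\widehat\Psi)=m^{\BR}(\Psi)$; the paper's staging has the mild advantage that the ``central computation'' you flag as the main obstacle is carried out explicitly in the proof of Proposition~\ref{imp2}, so a reader sees the $\gamma$-sum and the holonomy comparison rather than having to unpack ``one shows''.
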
 
 
 The rest of this section is devoted to the proof of this fact.
The proof involves effectivizing the original argument of Roblin \cite{Ro}, extended in \cite{Sch}, \cite{OS}, \cite{MO},
while making the dependence of the implied constant
 on the relevant functions precise.

 For $\e>0$ and a subset $S$ of $G$, $S_\e$ denotes the set
$\{s\in S: d(s, e)\le \e\}$. Let $$P:=N^- AM .$$
 Then the sets $B_\e:= P_\e N^+_\e$, $\e>0$ form a basis of neighborhoods of $e$ in $G$.

\begin{prop} \label{imp1} Fix $x \in \G \ba G$ and $y\in xP_{\e_0}$ and put $\phi:=\Phi|_{yN^+_{\e_0}}\in C^{r_0}(yN^+_{\e_0})$. 
Then for $t>1$,
$$\int_{yN^+_{\e_0}} \Psi(yna_t) \phi(yn) d\mu_{yN^+}^{\PS}(yn)
=\frac{\mu_{yN^+}^{\PS}(\phi)}{|m^{\BMS}|} m^{\BMS}(\Psi)+O(c_\G \|\Psi\|_{C^{r_0}}\|\phi\|_{C^{r_0}} e^{-\eta_1 t})$$
where $\eta_1 > 0$ depends on $\eta_\G$ and the implied constant depends only on $\mathcal Q$ and $\alpha(\mathcal Q,\Lambda(\G))$.
\end{prop}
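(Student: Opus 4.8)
The plan is to effectivise Roblin's transverse–intersection argument and bootstrap from the BMS mixing hypothesis \eqref{bms3}. The heuristic is that the integral of $\Psi(yna_t)\phi(yn)$ over the single unstable horospherical plaque $yN^+_{\e_0}$ against $\mu_{yN^+}^{\PS}$ can be recovered, up to a controlled error, from the integral of the same data against $m^{\BMS}$ over a thin box obtained by thickening the plaque in the complementary directions $P=N^-AM$; the mixing hypothesis then supplies the $t\to\infty$ asymptotics for that thickened integral. Throughout, $\alpha(\mathcal Q,\LG)$ serves to keep every implied constant uniform over $y\in xP_{\e_0}$, $x\in\G\ba G$.

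\emph{Step 1: the thickening.} For $\e\in(0,\e_0]$ pick a nonnegative $q_\e\in C^\infty(P_\e)$, equal to $1$ on $P_{\e/2}$, supported in $P_\e$, with $\|q_\e\|_{C^{r_0}}\ll \e^{-r_0}$. Using the local product description \eqref{BMS_and_PS} of $m^{\BMS}$ in coordinates $g=ynp$ with $yn\in yN^+_{\e_0}$ and $p\in P$, renormalise $q_\e$ fibrewise over $yn$ so that its transverse conditional mass is exactly $1$ for each $yn$; write $(yn,p)\mapsto\phi(yn)\tilde q_\e(yn,p)$ for the resulting function on the box, extended by $0$ to $\Phi_\e\in C^{r_0}(\G\ba G)$. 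For $\e\le\e_0$ this is supported in a fixed bounded neighbourhood of $\supp\Phi$, so \eqref{bms3} applies to $(\Psi,\Phi_\e)$, and by construction
$$ m^{\BMS}(\Phi_\e)=\mu_{yN^+}^{\PS}(\phi),\qquad \|\Phi_\e\|_{C^{r_0}}\ll \e^{-L}\|\phi\|_{C^{r_0}} $$
for some $L=L(r_0,n)$, the implied constants depending only on $\mathcal Q$ and $\alpha(\mathcal Q,\LG)$ (which bounds the fibrewise normalising factors from below uniformly).

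\emph{Step 2: unthickening, mixing, and the choice of $\e$.} Writing $p=n^-a_sm\in P_\e$ and $g=ynp$, we have $ga_t=yna_t\cdot(a_{-t}pa_t)$, and since $N^-$ is contracted by $a_t$ as $t\to+\infty$ the element $a_{-t}pa_t$ lies within $e^{-t}\e$ of $a_sm$; as $|s|,d(m,e)\le\e$ this gives $\Psi(ga_t)=\Psi(yna_t)+O\bigl(\|\Psi\|_{C^1}(\e+e^{-t}\e)\bigr)$ uniformly. Integrating over the box against $m^{\BMS}$ and collapsing the transverse integral via the fibrewise normalisation of $\Phi_\e$,
$$ \int_{\G\ba G}\Psi(ga_t)\Phi_\e(g)\,dm^{\BMS}=\int_{yN^+_{\e_0}}\Psi(yna_t)\phi(yn)\,d\mu_{yN^+}^{\PS}(yn)+O\bigl(\e\,\|\Psi\|_{C^1}\|\phi\|_\infty\bigr). $$
On the other hand \eqref{bms3} together with Step 1 gives $\int\Psi(ga_t)\Phi_\e\,dm^{\BMS}=|m^{\BMS}|^{-1}m^{\BMS}(\Psi)\mu_{yN^+}^{\PS}(\phi)+O\bigl(c_\G\|\Psi\|_{C^{r_0}}\e^{-L}\|\phi\|_{C^{r_0}}e^{-\eta_\G t}\bigr)$. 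Equating, absorbing the $C^1\times L^\infty$ error into the $C^{r_0}$ norms, and choosing $\e=e^{-\eta_\G t/(2L)}$ makes both error terms $O\bigl(c_\G e^{-\eta_1 t}\|\Psi\|_{C^{r_0}}\|\phi\|_{C^{r_0}}\bigr)$ with $\eta_1:=\min\{\eta_\G/(2L),\eta_\G/2\}$, which is the assertion; the implied constant inherits dependence only on $\mathcal Q$ and $\alpha(\mathcal Q,\LG)$ from Steps 1–2.

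\emph{Main obstacle.} The conceptual skeleton is routine; the real work, and the reason $\alpha(\mathcal Q,\LG)$ enters, is the uniform bookkeeping in Step 1. Forcing $m^{\BMS}(\Phi_\e)$ to equal $\mu_{yN^+}^{\PS}(\phi)$ on the nose requires the transverse conditionals of $m^{\BMS}$ along the unstable horosphere — equivalently the Patterson–Sullivan densities on the stable plaques — to be normalisable fibrewise at every $yn$ carrying $\mu_{yN^+}^{\PS}$–mass, with normalising factors bounded below by a fixed power of $\e$ uniformly in $y$ and $x$; establishing this (using that $\mathcal Q$ stays a bounded distance from $\LG$ along $N^\pm$, so that $\alpha(\mathcal Q,\LG)<\infty$, together with the $C^1$ dependence of these conditionals on the base point coming from the quasi-product structure of $m^{\BMS}$) is where the genuine estimation lies. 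The flow-contraction estimate of Step 2 and the optimisation in $\e$ are then straightforward.
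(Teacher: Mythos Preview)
Your overall strategy—thicken the unstable plaque in the transverse directions, apply the BMS mixing hypothesis to the thickened function, and optimise in the thickening parameter—is exactly right and matches the paper's approach. The gap is in Step~1: you take the transverse thickening in all of $P_\e=(N^-AM)_\e$ and then assert that the fibrewise normalising factors (the $P_\e$–conditionals of $m^{\BMS}$) are bounded below by a fixed power of $\e$. This fails. For $yn\in\supp\mu_{yN^+}^{\PS}$ one has $(yn)^+\in\LG$, but the backward endpoint $(yn)^-=y^-$ is fixed and carries no such constraint; the stable Patterson--Sullivan measure $\mu^{\PS}_{ynN^-}$ charges only those $n^-$ with $(ynn^-)^-\in\LG$, so if $y^-$ lies in a gap of the (fractal) limit set of width exceeding $\e$ then $\mu^{\PS}_{ynN^-}(N^-_\e)=0$. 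Since $\e\to 0$ as $t\to\infty$ in your optimisation, this happens eventually for every $y$ with $y^-\notin\LG$, and the normalisation is impossible.

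You have the role of $\alpha(\mathcal Q,\LG)$ backwards: it does not furnish a power-of-$\e$ lower bound on $\mu^{\PS}_{ynN^-}(N^-_\e)$ as $\e\to 0$, but rather an upper bound $R_0\approx\alpha(\mathcal Q,\LG)$ on how \emph{large} an $N^-$-ball one must take to be sure it meets $\supp\mu^{\PS}_{ynN^-}$. The paper therefore thickens in $N^-$ by the \emph{fixed} amount $R_0$ and shrinks only the $A_\e M_\e$ directions; on those directions the transverse measure is Haar, so the normalisation is trivial and $\|q_\e\|_{C^{r_0}}\ll\e^{-\ell}$ for some $\ell$ independent of $t$. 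The cost is an additional error $O(R_0e^{-t})$ in the unthickening (from contracting the fixed-size $N^-_{R_0}$ piece under $a_t$), which is harmless. With this single modification—replace $P_\e$ by $N^-_{R_0}A_\e M_\e$—your Step~2 and the optimisation $\e=e^{-\eta_\G t/(\ell+\eta_\G)}$ go through exactly as you wrote them.
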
  
\begin{proof} 
Set $R_0:=\sup \lbrace \alpha(g,\Lambda(\G)) : g \in yN_{\e_0}^+ \rbrace  +2$. For $\epsilon \in (0, 1)$ we may choose a smooth positive function $q_\e$ supported on $yN^+_{\e_0} N^-_{R_0}A_\epsilon M_\epsilon$
 such that
 $$\int_{yhN^-}\int_{\mathbb{R}}\int_M q_\e(na_tm) dm dt d\tilde \mu^{PS}_{yhN^-}(n) = 1$$
for all $h \in N^+_{\e_0}$   and that $\|q_\e\|_{C^{r_0}}\ll \e^{-\ell}$
for some positive $\ell$.   Define a $C^{r_0}$-function $ \Phi^\dag$ supported on $yN^+_{\e_0} N^-_{R_0} A_\epsilon M_\epsilon $ as follows:
 $$ \Phi^\dag(y h p):= {\phi(yh ) q_{\e} (yhp)}.$$
We have $m^{\BMS}(\Phi^\dag)=\mu_{yN^+}^{\PS}(\phi) $.
Now by \eqref{BMS_and_PS} and the hypothesis of Theorem \ref{de}, we have
\begin{align*}
&\int_{yN^+_{\e_0}} \Psi(yna_t) \phi(yn) d\mu_{yN^+}^{\PS}(yn) =(1+O(\e + R_0e^{-t})) \la a_{t} \Psi, \Phi^\dag\ra_{m^{\BMS}}\\
&= (1+O(\e + R_0e^{-t}))\left( \frac{\mu_{yN^+}^{\PS}(\phi) }{|m^{\BMS}|} m^{\BMS}(\Psi)+O(c_\G \e^{-\ell}e^{-{\eta_\G} t})\right)\\
&= \frac{\mu_{yN^+}^{\PS}(\phi) }{|m^{\BMS}|} m^{\BMS}(\Psi)+O(\e + c_\G \e^{-\ell} e^{-{\eta_\G} t} + R_0e^{-t})
\end{align*}
where the implied constant depends only on the $C^{r_0}$-norms of $\Psi$ and $\phi$ and $\mathcal Q$.
By taking 
\begin{equation} \e=e^{-{\eta_\G} t /(\ell+{\eta_\G})} \label{choose_epsilon_carefully_for_t_trick} \end{equation}
and by setting $\eta_1:={{\eta_\G}  /(\ell+{\eta_\G})},$ we obtain
$$\int_{yN^+_{\e_0}} \Psi(yna_t) \phi(yn) d\mu_{yN^+}^{\PS}(yn) =
\frac{\mu_{yN^+}^{\PS}(\phi) }{|m^{\BMS}|} m^{\BMS}(\Psi) +O(c_\G R_0 e^{-\eta_1 t}).$$
Since $R_0$ is bounded above in terms of  $\alpha(\mathcal Q,\Lambda(\G))$,
 this proves the claim.
\end{proof} 
Equidistribution of the Patterson-Sullivan density on horospheres implies a weighted equidistribution result for the lebesgue density on horospheres.

\begin{prop} \label{imp2} Keeping the same notation as in Proposition \ref{imp1} and \eqref{define_BR} we have
$$e^{(n - 1-\delta)t}\int_{yN^+_{\e_0}} \Psi(yna_t) \phi(yn) dn
=\frac{\mu_{yN^+}^{\PS}(\phi)}{|m^{\BMS}|} m^{\BR}(\Psi)+O(c_\G \|\Psi\|_{C^{r_0}}\|\phi\|_{C^{r_0}} e^{-\eta_1 t/2})$$
where the implied constant depends only on $\mathcal Q$ and $\alpha(\mathcal Q,\Lambda(\G))$.
\end{prop}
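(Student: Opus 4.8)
The plan is to transfer the Patterson--Sullivan equidistribution of Proposition \ref{imp1} to the Lebesgue measure on the same unstable horosphere; this is the effective form of Roblin's expanding-horosphere equidistribution argument \cite{Ro}, also treated in \cite{Sch,OS,MO}. The Burger--Roblin measure $m^{\BR}$ (rather than $m^{\BMS}$) appears precisely because replacing the Patterson--Sullivan measure along the $N^+$-direction by Lebesgue measure is exactly the passage from $m^{\BMS}$ to $m^{\BR}$ in the product descriptions of \eqref{BMS_and_PS}. The renormalising factor $e^{(n-1-\delta)t}$ reflects both the $e^{t}$-expansion of $N^+$ under $a_t$ and the Hausdorff dimension $\delta$ of $\Lambda(\G)$.

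The first step is to localise the support. If $\Psi(yna_t)\neq 0$ then $yna_t\in\pi(\mathcal Q)$, so by the definition of $\alpha(\mathcal Q,\Lambda(\G))$ there is $h_s\in N^+$ with $|s|\le\alpha(\mathcal Q,\Lambda(\G))$ and $(yna_th_s)^+\in\Lambda(\G)$. Writing $a_th_sa_{-t}=h_{s'}$ with $|s'|=O(e^{-t}|s|)$ (the group $N^+$ is contracted under conjugation by $a_t$) and using that the forward endpoint is unchanged by $a_t$, this forces $(yn)^+$ to lie within $O_{\mathcal Q}(e^{-t})$ of $\Lambda(\G)$; the stable part of $\alpha$ imposes no constraint on this side. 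Hence the $dn$-integral may be restricted to the $O(e^{-t})$-neighbourhood $\mathcal N_t\subset yN^+_{\e_0}$ of $\Lambda(\G)$, which by Ahlfors $\delta$-regularity of $\Lambda(\G)$ satisfies $\Leb(\mathcal N_t)\asymp e^{-(n-1-\delta)t}$ -- so the renormalised quantity is $O(1)$, and Lebesgue measure only has to be compared with $\mu_{yN^+}^{\PS}$ on $\mathcal N_t$.

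The transfer itself then proceeds as follows. Cover $\Lambda(\G)\cap yN^+_{\e_0}$ by $\asymp e^{\delta t}$ balls $B_i=B_{e^{-t}}(yn_i)$ with $yn_i\in\Lambda(\G)$, of bounded multiplicity, so that the $B_i$ also cover $\mathcal N_t$; the doubling property of $\mu_{yN^+}^{\PS}$ (Lemma \ref{doubling_lemma}) together with the shadow lemma gives $\mu_{yN^+}^{\PS}(B_i)\asymp e^{-\delta t}$ uniformly in $i$, matching $e^{(n-1-\delta)t}\Leb(B_i)$ up to a bounded factor. Pushing a ball $B_i$ forward by $a_t$ turns $e^{(n-1)t}\Leb|_{B_i}$ into Lebesgue measure on a bounded piece of the expanding horosphere through $yn_ia_t$, and, by conformality of the Patterson--Sullivan density, turns $\mu_{yN^+}^{\PS}|_{B_i}$ into Patterson--Sullivan measure on a comparable piece; matching these pushforwards leaves exactly the extra unstable-Lebesgue direction that distinguishes $m^{\BR}$ from $m^{\BMS}$. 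Organising this comparison globally, one relates the Lebesgue integral to a Patterson--Sullivan integral against a box-thickening $\Phi^\dag_\e$ of $\Psi$ of the type constructed in the proof of Proposition \ref{imp1} (bounded scale $\alpha(\mathcal Q,\Lambda(\G))$ in the $N^-$-direction, scale $\e$ in the $A,M$-directions, with $\|\Phi^\dag_\e\|_{C^{r_0}}\ll\e^{-\ell}$), so that Proposition \ref{imp1} applies directly. Its main term assembles to $\tfrac{\mu_{yN^+}^{\PS}(\phi)}{|m^{\BMS}|}m^{\BR}(\Psi)$, while the errors are $O(\e)$ from the thickening plus $O(c_\G\e^{-\ell}e^{-\eta_1 t})$ from Proposition \ref{imp1}; optimising $\e$ exactly as in \eqref{choose_epsilon_carefully_for_t_trick} gives the error $O(c_\G\|\Psi\|_{C^{r_0}}\|\phi\|_{C^{r_0}}e^{-\eta_1 t/2})$, with the dependence on $\alpha(\mathcal Q,\Lambda(\G))$ tracked through the neighbourhood scale $R_0$.

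The main obstacle is precisely this comparison between Lebesgue measure and $\mu_{yN^+}^{\PS}$ at the $e^{-t}$-scale: $\mu_{yN^+}^{\PS}$ is not comparable to Lebesgue measure at fine scales with a \emph{uniform} constant, so the per-ball ratios $e^{(n-1-\delta)t}\Leb(B_i)/\mu_{yN^+}^{\PS}(B_i)$ only fluctuate within a bounded range rather than being constant. The argument must therefore be arranged so that $\Psi$ is kept as a genuine function varying over the bounded horospherical pieces $B_ia_t$ -- not collapsed to its value at $yn_ia_t$, which would wrongly reproduce $m^{\BMS}$ -- so that these fluctuations are averaged against $\mu_{yN^+}^{\PS}$ and the clean constant $m^{\BR}(\Psi)/|m^{\BMS}|$ emerges. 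This is Roblin's key point, and the work lies in making it quantitative while preserving an exponential rate.
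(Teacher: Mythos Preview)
Your overall strategy---reduce to Proposition~\ref{imp1} by passing from Lebesgue to Patterson--Sullivan measure on $yN^+$, with $m^{\BR}$ emerging because the product description \eqref{BMS_and_PS} differs from the $m^{\BMS}$ one precisely in the $N^+$ factor---is correct and matches the paper. The localisation of the $dn$-integral to an $O(e^{-t})$-neighbourhood of $\Lambda(\G)$ is also right, as is your identification of the main obstacle in the final paragraph.

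The gap is in the mechanism. The thickening $\Phi^\dag_\e$ you describe, built ``of the type constructed in the proof of Proposition~\ref{imp1}'' in the $N^-$, $A$, $M$ directions, is the device that converts a horospherical PS integral into a BMS correlation; it does \emph{not} convert a horospherical Lebesgue integral into a horospherical PS integral, which is what is needed here. Your covering-by-balls picture is morally correct, but nothing in it forces the main term to be exactly $m^{\BR}(\Psi)/|m^{\BMS}|$ rather than some bounded multiple depending on the per-ball Lebesgue/PS ratios---you name this obstacle but do not resolve it.

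The paper's resolution is to modify $\Psi$ in the $N^+$ direction rather than the transverse ones. Lift $\Psi$ to $\psi$ on $G$; fix a bump $q$ on $N^+_R$ with $R\asymp\alpha(\mathcal Q,\Lambda(\G))$ chosen so every relevant $N^+_R$-leaf meets the limit set; and set $\psi_1(z\cdot h)=k(z)\,q(h)/g(z)$ where $k(z)=\int_{zN^+}\psi\,d\mu^{\Leb}$ and $g(z)=\int_{zN^+}q\,d\mu^{\PS}$. By construction the fibrewise identity $\int_{zN^+}\psi_1\,d\mu^{\PS}=\int_{zN^+}\psi\,d\mu^{\Leb}$ holds for every $z$, so $m^{\BMS}(\psi_1)=m^{\BR}(\Psi)$ \emph{exactly}. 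One then unfolds the Lebesgue integral over $\Gamma$, uses the transverse intersection points $z_\gamma\in\gamma ya_tN^+\cap xP_{\e_0}$ together with the fibre identity to swap $\psi$ for $\psi_1$ (at the cost of replacing $\phi$ by $\phi_{(R+\e_0)e^{-t},+}:=\sup_{N^+_{(R+\e_0)e^{-t}}}\phi$, an $O(e^{-t})$ perturbation), and applies Proposition~\ref{imp1} to $\psi_1$. This $\psi_1$ construction is precisely the missing device that turns your heuristic into an equality with the correct constant.
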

\begin{proof} This is the transverse intersection argument. It is potentially confusing, so we shall try to be careful and precise.  We lift $\Psi$ to give a function $\psi$ on $G$ with compact support in $xN^-_{\epsilon_0}A_{\epsilon_0}MN^+_{\epsilon_0}$ which is contained in a single fundamental domain. For simplicity we shall assume that $\psi, \phi$ are non-negative. 

We're currently taking integrals with respect to the Lebesgue type measures and would much rather take them with respect to Patterson-Sullivan measures whose equidistribution we already understand. We can make this change at the price of a small error and changing the function $\psi$ somewhat. That's our next job. 

More precisely, choose $R'$ large enough that the forward projections 
\[ (xnamN^+_{R'})^+ \mbox{ meet the limit set $\Lambda(\G)$}\]
for all $nam$ in $N^-_{\epsilon_0}A_{\epsilon_0}M_{\epsilon_0}$, and choose $R = R' + 1$. Choose a smooth positive bump function $q$ on $N^+_{R}$ with the property that 
\[ g(xnam) := \int_{xnamN^+_{R}} q((xnam)^{-1}h) d\mu^{\PS}_{xnamN^+}(h) >0 \]
for all $nam$ in $N^-_{\epsilon_0}A_{\epsilon_0}M_{\epsilon_0}$.  This auxiliary function allows us to modify $\psi$ as follows: let
\[ k(xnam) := \int_{xnamN^+_{\epsilon_0}} \psi(xnamh) d\mu^{\Leb}_{xnamN^+}(h) \] 
and set $\psi_1$ to be the function with compact support on $xN^-_{\epsilon_0}A_{\epsilon_0}MN^+_{R}$ given by 
\[ \psi_1(xnamh) := \frac{ k(xnam)q(h) }{g(xnam)}.\]
we observe the following properties of $\psi_1$:
\begin{itemize}
\item $\int_{zN_R^+} \psi_1(h) d\mu^{\PS}_{zN_R^+}(h) = k(z) = \int_z{N_{\epsilon_0}^+} \psi(h) d\mu^{\Leb}_{zN_R^+}(h) $ for all appropriate $z$, and consequently;
\item $m^{\BR}(\psi) = m^{\BMS}(\psi_1)$.
\end{itemize}
We fix also one piece of notation: denote by $\phi_{\alpha, +}(g)  := \sup_{h \in N^+_\alpha} \phi(gh)$ and write $\phi_{\alpha, -}$ for the corresponding expression with an infimum. Now calculate:
\begin{eqnarray*} (*) &:=& \sum_\G e^{(n - 1 - \delta)t}\int_{yN^+}  \psi(\g na_t) \phi(n) d\mu^{\Leb}_{yN^+}(n)\\
&=& \sum_\G e^{- \delta t}\int_{yN^+a_t}  \psi(\g na_t) \phi(na_{-t}) d\mu^{\Leb}_{yN^+a_t}(n)\\
&=& \sum_\G  e^{- \delta t}\int_{xN^-_{\epsilon_0}A_{\epsilon_0}MN^+_{\epsilon_0}}  \psi( n)d( \gamma_* \phi(na_{-t}) \mu^{\Leb}_{yN^+a_t}(n) ).\\
\end{eqnarray*}
Clearly the summation need only be taken over those $\g$ for which the intersection $\g \in \G: \g y a_t N^+ \cap xP_{\epsilon_0}$ is non-empty. For every such $\g$ let $z_\g$ be the (unique) intersection point. We have 
\begin{eqnarray*}
(*) &=& \sum_{z_\g}  e^{- \delta t} \int_{xN^-_{\epsilon_0}A_{\epsilon_0}MN^+_{\epsilon_0}}  \psi( n) \phi(\gamma^{-1} na_{-t}) \mu^{\Leb}_{z_\g N^+}(n) )\\
&\leq &  \sum_{z_\g}  e^{- \delta t}\int_{xN^-_{\epsilon_0}A_{\epsilon_0}MN^+_{\epsilon_0}}  \psi( n) \phi_{\epsilon_0e^{-t}, +}(\gamma^{-1} z_\g a_{-t}) \mu^{\Leb}_{z_\g N^+}(n) )\\
&=& \sum_{z_\g}  e^{- \delta t}\int_{xN^-_{\epsilon_0}A_{\epsilon_0}MN^+_{R}}  \psi_1( n) \phi_{\epsilon_0e^{-t}, +}(\gamma^{-1} z_\g a_{-t}) \mu^{\PS}_{z_\g N^+}(n) )\\
&\leq& \sum_{z_\g}  e^{- \delta t} \int_{xN^-_{\epsilon_0}A_{\epsilon_0}MN^+_{R}}  \psi_1( n) \phi_{(\epsilon_0 + R)e^{-t}, +}(\gamma^{-1} n a_{-t}) \mu^{\PS}_{z_\g N^+}(n) )\\
&=&  \sum_\G \int_{yN^+}  \psi_1(\g na_t) \phi_{(\epsilon_0 + R)e^{-t}, +}(n) d\mu^{\PS}_{yN^+}(n)
\end{eqnarray*}
We now apply equidistribution for the Patterson-Sullivan measure on unstable leaves; let $\tilde \psi_1$ be the function on $\G \ba G$ obtained by summing $\psi_1$ over $\G$ orbits. Then, using the fact that
\begin{eqnarray*} (*) &\leq& \frac{\mu_{yN^+}^{\PS}(\phi_{(\epsilon_0 + R)e^{-t}, +})}{|m^{\BMS}|} m^{\BMS}(\tilde \psi_1)+O(c_\G \|\tilde \psi_1\|_{C^{r_0}}\|\phi_{(\epsilon_0 + R)e^{-t}, +}\|_{C^{r_0}} e^{-\eta_1 t/2})  \\
&\leq&  \frac{\mu_{yN^+}^{\PS}(\phi)}{|m^{\BMS}|} m^{\BR}( \psi)+O(c_\G \| \psi\|_{C^{r_0}}\|\phi\|_{C^{r_0}} e^{-\eta_1 t/2})  \end{eqnarray*}
by changing the implied constant. In fact, to be really precise we should replace $\phi_{(\epsilon_0 + R)e^{-t}, +}$ by an appropriate smooth function before applying equidistribution. We assert that this is not hard. 

\end{proof}

The decay of matrix coefficients is a straightforward consequence of propositions \ref{imp1} and \ref{imp2}. First note by a partition of unity argument that we may consider functions $\Phi$,  $\Psi$
supported on $xB_{\e_0/2}, yB_{\e_0/2}$. 
We note that $dm^{\Haar}(pn)=dp dn$ where $dp$ is a left Haar measure on $P$ (which is equivalent to the Lebesgue measure), and hence
 $$\int_{\G\ba G} \Psi(xa_t) \Phi(x) dm^{\Haar}(x)=\int_{xp\in zP_{\e_0}} \int_{xpN^+_{\e_0}}
 \Psi(xpna_t) \Phi(xp n) dn dp.$$

Hence applying Propositions \ref{imp1} and \ref{imp2} for each $y=xp\in xP_{\e_0}$,
we deduce that
 \begin{align*}&e^{ (n-1-\delta) t}  \int_{\G\ba G} \Psi(xa_t) \Phi(x) dm^{\Haar}(x)\\
 &=\int_{xp\in xP_{\e_0}} \left( \tfrac{m^{\BR}(\Psi)\mu^{\PS}_{xpN^+} (\Phi|_{xpN^+_{\e_0}})}{|m^{\BMS}|} 
+ O(c_\G \|\Psi\|_{C^{r_0}} \|\Phi|_{xpN^+_{\e_0}} \|_{C^{r_0}} e^{-\eta_1 t/2}) \right) dp
\\&= \tfrac{m^{\BR}(\Psi)m^{\BR_*}(\Phi)}{|m^{\BMS}|} + O(c_\G \|\Psi\|_{C^{r_0}} \|\Phi \|_{C^{r_0}} e^{-\eta_1 t/2}) 
\end{align*}
as required.


\begin{thebibliography}{zzzzz}
 
 \bibitem{Av} 
Avila, Artur; Gou'zel, SŽbastien; Yoccoz, Jean-Christophe.
\newblock {\em  Exponential mixing for the TeichmŸller flow.} 
\newblock Publ. Math. Inst. Hautes ƒtudes Sci., Vol 104 (2006), 143-211.

\bibitem{bowditch93} Bowditch, Brian H,
\newblock {\em Geometrical finiteness for hyperbolic groups,}
\newblock Journal of functional analysis 113.2 (1993): 245-317.

\bibitem{BGS} Bourgain, Jean, Alex Gamburd, and Peter Sarnak,
\newblock {\em Generalization of Selberg?s\ frac {3}{{16}} theorem and affine sieve,} 
\newblock Acta mathematica 207.2 (2011): 255-290.

\bibitem{Bowen73} 
Bowen, Rufus.
\newblock {\em Markov partitions for Axiom A diffeomorphisms. }
\newblock  Amer. J. Math., Vol 92 (1970), 725-747. 

\bibitem{Br} M. Brin,  
\newblock {\em Ergodic theory of frame flows}
\newblock Ergodic Theory and Dynamical Systems {II}, Proc. Spec. Year, Maryland




\bibitem{Che} Chernov, Nikolai.
\newblock {\em Invariant measures for hyperbolic dynamical systems} 
\newblock Handbook of dynamical systems, Vol 1A, 321-408.

\bibitem{Do}  Dolgopyat, Dmitry. 
\newblock {\em On decay of correlations in Anosov flows.}
\newblock  Ann. of Math., Vol 147 (1998), no. 2, 357-390. 

\bibitem{DRS} Duke, William, Zeév Rudnick, and Peter Sarnak,
\newblock {\em Density of integer points on affine homogeneous varieties,}
\newblock Duke Math. J 71.1 (1993): 143-179.

\bibitem{EM} Eskin, Alex, and Curt McMullen,
\newblock {\em Mixing, counting, and equidistribution in Lie groups,}
\newblock Duke Math. J 71.1 (1993): 181-209.


\bibitem{FPT} Fisher, T., T. Petty, and S. Tikhomirov
\newblock {\em Nonlocally maximal and premaximal hyperbolic sets,}
\newblock arXiv preprint arXiv:1510.05933 (2015).

\bibitem{GS}  Gorodnik, Alexander, and Ralf Spatzier,
\newblock {\em Exponential mixing of nilmanifold automorphisms,}
\newblock Journal d'Analyse Mathématique 123.1 (2014): 355-396.


 \bibitem{HM} R. E. Howe, C. C. Moore.
 \newblock {\em Asymptotic properties of unitary representations}
 \newblock  Journal of Functional Analysis, Vol 32(1), 72-96, 1979.
 
 \bibitem{KM} Kleinbock, D. Y., and G. A. Margulis,
 \newblock {\em Bounded orbits of nonquasiunipotent flows on homogeneous spaces}
 \newblock American Mathematical Society Translations (1996): 141-172.
 
 \bibitem{KO} Kontorovich, Alex, and Hee Oh,
 \newblock {\em Apollonian circle packings and closed horospheres on hyperbolic 3-manifolds}
 \newblock Journal of the American Mathematical Society 24.3 (2011): 603-648.
 
 \bibitem{La} S. Lalley.
\newblock {\em Renewal theorems in symbolic dynamics, with applications to geodesic
  flows, non-{E}uclidean tessellations and their fractal limits,}
\newblock Acta Math., 163(1-2):1--55, 1989.

\bibitem{LP} 
Lax, Peter; Phillips, Ralf.
 \newblock {\em The asymptotic distribution of lattice points in Euclidean and non-Euclidean spaces.}
 \newblock J. Funct. Anal., Vol 46 (1982), 280-350.
 
  \bibitem{Ma} Margulis, Grigoriy A,
 \newblock {\em On some aspects of the theory of Anosov systems,}
 \newblock Springer Berlin Heidelberg, 2004. 1-71.
 
 \bibitem{MO} Mohammadi, Amir, and Hee Oh,
\newblock {\em Matrix coefficients, Counting and Primes for orbits of geometrically finite groups,}
\newblock Journal of European Math. Soc., Vol 17 (2015), 837--897
 
  
\bibitem{MMO} Margulis, Gregory; Mohammadi, Amir; Oh, Hee.
 \newblock {\em Closed geodesics and holonomies for Kleinian manifolds. }
\newblock  GAFA, Vol 24 (2014), 1608-1636.
 
 
 \bibitem{MOW} Magee, Michael; Oh, Hee; Winter, Dale.
\newblock {\em Uniform congruence counting for Schottky semigroups in $\SL_2(\Z)$ }
\newblock Preprint, arXiv:1412.4284

\bibitem{Na} Naud, Fr\'ed\'eric. 
\newblock {\em Expanding maps on Cantor sets and analytic continuation of zeta functions . }
\newblock Ann. Sci. Ecole Norm. Sup.,Vol 38, (2005), 116--153

\bibitem{Hee} H. Oh,
\newblock {\em Harmonic analysis, ergodic theory and counting for thin groups.,}
\newblock Thin Groups and Superstrong Approximation 61 (2014): 179-210.

\bibitem{OS} Oh, Hee;  Shah, Nimish.
\newblock {\em Equidistribution and Counting for orbits of geometrically finite hyperbolic groups.}
\newblock Journal of AMS., Vol 26 (2013), 511-562.

\bibitem{OW} Oh, Hee, and Dale Winter,
\newblock {\em Uniform exponential mixing and resonance free regions for convex cocompact congruence subgroups of SL2 (Z),}
\newblock Journal of AMS, Vol 29(2016), 1069--1115

\bibitem{OWhj} Oh, Hee, and Dale Winter,
\newblock {\em Prime number theorems and holonomies for hyperbolic rational maps,}
\newblock arXiv preprint arXiv:1603.00107 (2016).


\bibitem{Polli87} Pollicott, Mark,
\newblock {\em Symbolic dynamics for Smale flows,}
\newblock  American Journal of Mathematics 109.1 (1987): 183-200.

\bibitem{PP} Parry, William;  Pollicott, Mark.
\newblock {\em Zeta functions and the periodic orbit structure of hyperbolic dynamics. }
\newblock AstŽrisque, 187-188 (1990).

\bibitem{Ra} Ratner, Marina,
\newblock {\em Markov partitions for Anosov flows onn-dimensional manifolds,}
\newblock Israel Journal of Mathematics 15.1 (1973): 92-114.

 \bibitem{Ro} Roblin, Thomas.
 \newblock {\em Ergodicit\'{e} et \'{e}quidistribution en courbure n\'{e}gative. }
 \newblock M\'{e}m. Soc. Math. Fr. (N.S.), Vol 95 (2003).
 
 \bibitem{Ru} Ruelle,
\newblock {\em Thermodynamic Formalism,}
\newblock Addisson-Wesley, New York, 1978


\bibitem{Sch} Schapira, Barbara,
\newblock {\em Equidistribution of the horocycles of a geometrically finite surface,}
\newblock International Mathematics Research Notices 2005.40 (2005): 2447-2471.

 
 \bibitem{St}  Stoyanov, Luchezar.
\newblock{\em Spectra of Ruelle transfer operators for axiom A flows.}
\newblock Nonlinearity, 24 (2011), no. 4, 1089-1120. 
 
 
 \bibitem{Su}
Sullivan, Dennis.
\newblock {\em Entropy, {H}ausdorff measures old and new, and limit sets of
  geometrically finite {K}leinian groups.}
\newblock  Acta Math., Vol 153 (1984), 259-277, 1984.

\bibitem{Warner} Warner, Garth.
\newblock {\em Harmonic analysis on semi-simple Lie groups I,}
\newblock Vol. 188. Springer Science and Business Media, 2012.






\end{thebibliography}
\end{document}